\documentclass[ejs]{imsart}

\arxiv{math.PR/0000000}

\RequirePackage[OT1]{fontenc}
\RequirePackage{amsthm,amsmath}
\RequirePackage{natbib}
\RequirePackage[colorlinks,citecolor=blue,urlcolor=blue]{hyperref}
\usepackage{amssymb}
\usepackage{epsf}
\usepackage{graphicx}
\usepackage{epsfig}




\arxiv{arXiv:0000.0000}

\startlocaldefs
\newcommand{\E}{\mathbf{E}}
\numberwithin{equation}{section}
\theoremstyle{plain}
\newtheorem{thm}{Theorem}[section]
\newtheorem{lemma}[thm]{Lemma}
\newtheorem{theorem}{Theorem}[section]
\newtheorem*{corollary}{Corollary}

\theoremstyle{definition}
\newtheorem{definition}{Definition}[section]









\newcommand{\ind}[1]{\mathbb I\left\lbrace {#1} \right\rbrace}








\newcommand{\Prob}{\mathbb P}




\newtheorem{assumption}{Assumption}
\newtheorem{proposition}{Proposition}
\endlocaldefs

\begin{document}

\begin{frontmatter}
\title{Honest and adaptive confidence sets in $L_p$}
\runtitle{Honest and adaptive  confidence sets in $L_p$}

\begin{aug}
\author{\fnms{Alexandra} \snm{Carpentier}\ead[label=e1]{a.carpentier@statslab.cam.ac.uk}}

\runauthor{A. Carpentier}

\affiliation{University of Cambridge\thanksmark{m1}}

\address{Statistical Laboratory\\
Center for Mathematical Sciences\\
Wilberforce Road\\
CB3 0WB Cambridge\\
United Kingdom\\
\printead{e1}}

\end{aug}

\begin{abstract}
We consider the problem of constructing honest and adaptive confidence sets in $L_p$-loss (with $p\geq 1$ and $p <\infty$) over sets of Sobolev-type classes, in the setting of non-parametric Gaussian regression. The objective is to adapt the diameter of the confidence sets with respect to the smoothness degree of the underlying function, while ensuring that the true function lies in the confidence interval with high probability. When $p \geq 2$, we identify two main regimes, (i) one where adaptation is possible without any restrictions on the model, and (ii) one where critical regions have to be removed. We also prove by a matching lower bound that the size of the regions that we remove can not be chosen significantly smaller. These regimes are shown to depend in a qualitative way on the index $p$, and a continuous transition from $p=2$ to $p=\infty$ is exhibited.
\end{abstract}

\begin{keyword}[class=MSC]
\kwd[Primary ]{62G15}
\kwd[; secondary ]{62G10}
\end{keyword}

\begin{keyword}
\kwd{Non-parametric confidence sets}
\kwd{Non-parametric testing problems}
\end{keyword}




\end{frontmatter}

\section{Introduction}

We consider in this paper the problem of building \textit{honest and adaptive} confidence sets around functions that belong to a $L_p$-Sobolev-type space in the non-parametric Gaussian regression setting.

This question was already investigated in $L_{\infty}$ and $L_{2}$, see for instance the papers~\citep{hoffman2002random, juditsky2003nonparametric, baraud2004confidence, robins2006adaptive,  cai2006adaptive, gine2010confidence, hoffmann2011adaptive, bull2011adaptive}. In particular, the recent papers~\citep{hoffmann2011adaptive, bull2011adaptive} develop for respectively $L_{\infty}$ and $L_2$ a minimax-optimal setting in which the construction of \textit{honest and adaptive} confidence sets is possible.

In the present paper, we extend these results to general values of $p \in [1, \infty[$. We develop minimax-optimal settings in which the construction of honest and adaptive confidence sets is possible. Since the case $1 \leq p \leq 2$ is essentially equivalent to the case $p=2$, we focus on this case $p \geq 2$ (and $p <\infty$). We prove that there is a continuous transition between the case $p=2$ described in~\citep{bull2011adaptive} and the case $p= \infty$ described in~\citep{hoffmann2011adaptive}. While the main idea of this paper, i.e.~to investigate the relationship between the problem of constructing adaptive and honest confidence sets and a certain infinite-dimensional composite testing problem, is similar to the one in~\citep{hoffmann2011adaptive, bull2011adaptive}, the techniques required for the solution of this testing problem are significantly more involved. It appears that the approaches of~\citep{hoffmann2011adaptive, bull2011adaptive} (in particular the analysis of the so-called infimum test) could not have been generalised in a straightforward way to the settings $p \in ]2, \infty[$. Also, our results imply that the curious dependence on whether $p$ is an even integer or not, that appears in a related minimax estimation problem studied in~\citep{lepski1999estimation}, is not relevant in the setting of confidence sets.

This paper is organised as follows. In Section~\ref{sec:setting}, we present the general setting. In Section~\ref{sec:results}, we provide our results, which are (i) the existence of adaptive estimators in $\|.\|_p$ norm, and (ii) the existence of honest and adaptive confidence sets in $\|.\|_p$ norm on some maximal models. The other sections of the paper present detailed proofs of these results. 
The Supplementary Material contains the proof for the existence of adaptive estimators, and also some classical preliminary results.

\section{Setting}\label{sec:setting}

Let $p\geq 1$ (and $p <\infty$). Let $\lfloor p \rfloor$ be the largest \textit{even} integer smaller than $p$ (this notation is not usual but we will need it in the course of the proofs). Let $s>t\geq 1/2$ be two degrees of smoothness.

Denote by $L_p([0,1]) = L_p$ the space of functions defined on $[0,1]$ such that $\|f\|_p^p = \int_0^1 |f(x)|^p dx < +\infty$, where $\|.\|_p$ is the usual $L_p-$norm. 

For any functions $(f,g) \in L_p \times L_q$ where $1/p + 1/q =1$, we consider the bilinear form $\langle f,g \rangle = \int_0^1 f(x) g(x) dx$.

\subsection{Wavelet basis}

We consider an orthonormal wavelet basis
\begin{equation*}
\{\phi_{k}, k \in Z_0, \psi_{l,k}, l > 0, k \in Z_l \},
\end{equation*}
such that for any integer $l\geq 0$, $Z_l \subset \mathbb Z$ and $|Z_l| \leq c2^l$ (where $c$ is a numerical constant). Also, we impose the usual conditions that for any $l>0, k\in Z_l$, $\int_0^1 \psi_{l,k}(x) dx =0$, and that $\psi_{l,k}(x) = 2^{l/2} \psi_k(2^l x)$. Such a basis exists (for instance the Cohen-Daubechies-Vial basis satisfies all these conditions, see~\cite{daub1993}).

We assume that the wavelet basis we consider satisfies the following assumption, which is quite standard.
\begin{assumption}\label{ass:wav1}
We assume that there is a universal constant $C_p$ such that we have for all $x\in [0,1]$ and any integer $J\geq 0$ 
\begin{equation*}
\sqrt{\sum_{k \in Z_0} \phi_k^2(x) + \sum_{0 < l\leq J,k \in Z_l} \psi_{l,k}^2(x)} \leq C_p 2^{J/2}.
\end{equation*}
\end{assumption}
It holds for any wavelet basis such that the mother wavelets $\psi_k$ are uniformly bounded and have sufficiently "disjoint support", i.e.~are well spread on the domain, see~\citep{hardle1998wavelets}. In other words, for uniformly bounded mother wavelets $\psi_k$ defined on a compact, this property is necessary to ensure the conservation of the norm of signals in $L_2$. This assumption is in particular satisfied for Cohen-Daubechies-Vial wavelets with $S>0$ first null moments (where the constant $C_p$ in the definition depends on $S$), see~\cite{daub1993}.

For any function $f \in L_p$, we consider the sequence of coefficients $a(f) = a$ and  the complementary sequence of coefficients $a'(f) = a'$ as
\begin{equation*}
a_{l,k} = \int_0^1 \psi_{l,k}(x) f(x) dx = \langle \psi_{l,k},f \rangle \hspace{5mm} and \hspace{5mm} a'_k = \langle \phi_k,f \rangle.
\end{equation*}

Consider the functions $f \in L_p$ that have the representation
\begin{equation}\label{eq:sequfunc}
f =  \sum_{k\in Z_0} \phi_k \langle \phi_k,f \rangle + \sum_{l>0} \sum_{k\in Z_l} \psi_{l,k} \langle \psi_{l,k},f \rangle =   \sum_{k\in Z_0} a'_k \phi_k  +\sum_{l>0} \sum_{k\in Z_0} a_{l,k} \psi_{l,k}.
\end{equation}

We moreover write for any $J\geq 0$
\begin{equation*}
\Pi_{V_J}(f) =  \sum_{k\in Z_0} a'_k \phi_k + \sum_{0< l\leq J} \sum_{k \in Z_l} a_{l,k} \psi_{l,k},
\end{equation*}
the projection of $f$ onto $\mathrm{span}(\phi_k, k\in Z_0, \psi_{l,k}, 0< l \leq J, k \in Z_l)$. We also write
\begin{equation*}
\Pi_{W_J}(f) =  \sum_{k \in Z_J} a_{J,k} \psi_{J,k}, \hspace{5mm} and \hspace{5mm} \Pi_{W_0}(f) =  \sum_{k \in Z_0} a_{k}' \phi_{k}.
\end{equation*}
the projection of $f$ onto $\mathrm{span}(\psi_{J,k}, k \in Z_J)$ or $\mathrm{span}(\phi_k, k\in Z_0)$.

\subsection{Besov spaces}

We consider for any $h \geq 1$, $p \geq 1$ (and $p <\infty$) and $r \geq 0$ the Besov norms
\begin{equation*}
\|f\|_{r,p,h} = \left(|(\langle f,\phi_{k} \rangle)_k|_{l_p}^h  + \sum_{l>0} 2^{lh(r + 1/2-1/p)}| (\langle f,\psi_{l,k}\rangle)_k |_{l_p}^h \right)^{1/h},
\end{equation*}
where $|u|_{l_p} = (\sum_{i} u_i^p)^{1/p}$ (we extend this definition as $|u|_{l_{\infty}} = \sup_i |u_i|$ for $p = \infty$) is the sequential $l_p$ norm. We extend this definition for $h=\infty$ as
\begin{equation*}
\|f\|_{r,p,\infty} = \max\left(|(\langle f,\phi_{k} \rangle)_k|_{l_p},  \sup_{l>0} 2^{l(r + 1/2-1/p)}| (\langle f,\psi_{l,k}\rangle)_k |_{l_p} \right).
\end{equation*}
The Besov-type spaces are defined for any $h \in [1, \infty]$, $p \geq 1$ (and $p <\infty$) and $r \geq 0$ as
\begin{equation*}
B_{r,p,h} = \{f \in L^p: \|f\|_{r,p,h}<+\infty\}.
\end{equation*}
We write for a given $B>0$ the $B_{r,p,\infty}$ Besov ball of smoothness $r$ and radius $B$ as
\begin{equation*} 
\Sigma(r,B) \equiv \Sigma(r,p, B) = \{f\in B_{r,p,\infty}: \|f\|_{r,p,\infty}\leq B\}.
\end{equation*}

For regular enough wavelets (e.g.~Cohen-Daubechies-Vial wavelets with $S$ first null moments), the defined Besov spaces correspond to the functional Besov spaces (Sobolev-type spaces) up to some smoothness $S \geq s$, see~\cite{meyer1992wavelets, hardle1998wavelets}. We assume that our basis satisfies this property with $s \leq S$ where $s$ is the largest smoothness that we wish to consider in our testing problem.

The spaces $B_{r,p,\infty}$ are slightly larger than the usual $L_p-$Sobolev spaces, see~\cite{bergh1976interpolation, besov1978integral}. They are however the natural objects to consider for the construction of honest and adaptive confidence sets, since they are the largest Besov spaces where the rate $n^{-r/(2r+1)}$ is minimax-optimal for functional estimation (see Section~\ref{sec:results} for references and a precise statement of this assertion).

We will consider in this paper functions $f$ that have a smoothness larger than $1/2$, which is a common assumption for the problem of the construction of adaptive and honest confidence sets, see~\citep{bull2011adaptive}. This assumption is technical and wether or not the results in this paper could be generalised to rougher functions is an open question.

\subsection{Observation scheme}

The data is a realisation of a Gaussian process defined for any $x \in [0,1]$ and for a given $n$ as
\begin{equation*}
dY^{(n)}(x) = f(x) dx + \frac{dB_x}{\sqrt{n}},
\end{equation*}
where $(B_x)_{x\in[0,1]}$ is a standard Brownian motion, and $f \in L_2$ is the function of interest.

Let us write for any $l >0$ and $k \in Z_l$ the associated wavelet coefficients as
\begin{equation*}
\hat a_{l,k}= \langle \psi_{l,k},dY^{(n)} \rangle = \int_0^1 f(x)\psi_{l,k}(x)dx +  \frac{1}{\sqrt{n}}\int_0^1\psi_{l,k}(x)dB_x ,\hspace{2mm} \mathrm {and} \hspace{2mm} a_{l,k}= \langle \psi_{l,k},f \rangle,
\end{equation*}
and for any $k \in Z_0$ the complementary wavelet coefficients as
\begin{equation*}
\hat a_{k}= \langle \phi_{k},dY^{(n)} \rangle = \int_0^1 f(x)\phi_{k}(x)dx +  \frac{1}{\sqrt{n}}\int_0^1\phi_{k}(x)dB_x ,\hspace{2mm} \mathrm {and} \hspace{2mm} a_{k}'= \langle \phi_{k},f \rangle.
\end{equation*}
We consider the wavelet estimate of $f$:
\begin{equation*}
\hat f_n =\sum_{k\in Z_0} \hat a_k' \phi_k  +\sum_{l>0} \sum_{k\in Z_l} \hat a_{l,k} \psi_{l,k}.
\end{equation*}
Projected estimates up to frequency $J\geq 0$
\begin{equation*}
\hat f_n(J) := \Pi_{V_J} \hat f_n,
\end{equation*}
and also the estimate of $f$ at level $J$
\begin{equation*}
\Pi_{W_J}(f) = \Pi_{W_J} \hat f_n.
\end{equation*}
are usually considered.

In the sequel, we write $\Pr_f$ (respectively $\mathbb E_f$) the probability (respectively expectation) under the law of $Y^{(n)}$ when the  function underlying the data is $f$. When no confusion is likely to arise, we write simply $\Pr$ (respectively $\mathbb E$).


\section{Main Results}\label{sec:results}



In this Section, we will consider the Cohen-Daubechies-Vial wavelet basis with $S>s$ first null moments (where $s$ is the largest smoothness according to which we wish to adapt). As a matter of fact, any wavelet basis satisfying the conditions defined in Section~\ref{sec:setting} (in particular Assumption~\ref{ass:wav1} and the $S$ null moments condition) will work.

\subsection{Adaptive estimation}


We provide, when $p\geq 2$ (and $p <\infty$), a result for adaptive estimation, i.e.~that adaptive estimators for functions in $B_{r,p,\infty}$ (adaptive to the smoothness $r$) exist. The technique that we use is closely related to what is proposed in the papers~\citep{lepski1992problems, gine2009uniform,bull2011adaptive}. We do not need any assumptions on $f$ except that it is in $B_{r,p,\infty}$ for a given smoothness $r$.

\begin{theorem}\label{th:inference}
Assume that $p \geq 2$ (and $p <\infty$). There exists an adaptive estimator $\tilde f_n(dY^{(n)},p)$ such that there are two constants $u_p$ and $N_p$ that depend only on $p$ such that for every $B\geq 0$ and every $r > 0$, we have for any $n \geq N_p$ that
\begin{align*}
\sup_{f \in \Sigma(r,B)} \E_f \|\tilde f_n - f \|_p &\leq u_p  \big(B^{1/(2r+1)} + 1\big) n^{-r/(2r+1)}\\
&\equiv U_p(B) n^{-r/(2r+1)}.
\end{align*}
\end{theorem}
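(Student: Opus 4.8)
The plan is to take $\tilde f_n=\hat f_n(\hat J)$, a wavelet projection whose resolution level $\hat J$ is selected from the data by Lepski's method, and to control its $L_p$ risk through a bias--variance trade-off. First I would record the two deterministic ingredients. For $f\in\Sigma(r,B)$ the Besov constraint gives $|(\langle f,\psi_{l,k}\rangle)_k|_{l_p}\le B\,2^{-l(r+1/2-1/p)}$; combined with the (quasi-)disjointness of the supports at level $l$ and with $\|\psi_{l,k}\|_p\asymp 2^{l(1/2-1/p)}$ this yields the level-wise estimate $\|\Pi_{W_l}f\|_p\lesssim B\,2^{-lr}$, hence the bias bound $\|f-\Pi_{V_J}f\|_p\lesssim B\,2^{-Jr}$. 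For the stochastic term, writing $\hat f_n(J)-\Pi_{V_J}f=n^{-1/2}W_J$ with $W_J(x)=\sum_{k}\xi'_k\phi_k(x)+\sum_{0<l\le J,\,k}\xi_{l,k}\psi_{l,k}(x)$ a centred Gaussian field, Assumption~\ref{ass:wav1} gives the pointwise variance bound $\E W_J(x)^2\le C_p^2\,2^{J}$, so that by Gaussian moments and Jensen $\E_f\|\hat f_n(J)-\Pi_{V_J}f\|_p\le n^{-1/2}(\E\|W_J\|_p^p)^{1/p}\lesssim 2^{J/2}/\sqrt n$. It is essential here that $p<\infty$: no logarithmic factor appears, in contrast with the case $p=\infty$.

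Balancing these two bounds at the oracle level $J^*$ defined by $B\,2^{-J^*r}\asymp 2^{J^*/2}/\sqrt n$, i.e.\ $2^{J^*}\asymp(B\sqrt n)^{2/(2r+1)}$, gives the non-adaptive rate $\E_f\|\hat f_n(J^*)-f\|_p\lesssim B^{1/(2r+1)}n^{-r/(2r+1)}$, the additive $+1$ in the statement absorbing the small-$B$ regime in which $J^*$ would fall below the minimal resolution $J_{\min}$. Since $J^*$ depends on the unknown $(r,B)$, I would let $\hat J$ be the smallest $J$ in a dyadic grid $\{J_{\min},\dots,J_{\max}\}$, with $2^{J_{\max}}\asymp n$, such that $\|\hat f_n(J)-\hat f_n(J')\|_p\le\tau(J')$ for every finer $J'$, with threshold $\tau(J')=\kappa_p\,2^{J'/2}/\sqrt n$. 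The risk then splits as $\|\hat f_n(\hat J)-f\|_p\le\|\hat f_n(\hat J)-\hat f_n(J^*)\|_p+\|\hat f_n(J^*)-f\|_p$, the second term being the oracle rate.

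On the event $\{\hat J\le J^*\}$ the first term is at most $\tau(J^*)\lesssim 2^{J^*/2}/\sqrt n$, again the rate, because either $\hat J=J^*$ (and the term vanishes) or $J^*$ is one of the finer levels tested against $\hat J$. Everything therefore reduces to showing that $J^*$ passes its own test with overwhelming probability and that the contribution of the bad event $\{\hat J>J^*\}$ is negligible; the latter I would handle by Cauchy--Schwarz against the crude bound $\|\hat f_n(\hat J)-\hat f_n(J^*)\|_p\lesssim 1+n^{-1/2}\max_{J\le J_{\max}}\|W_J\|_p$, whose second moment is $O(1)$. This is where the decisive probabilistic tool enters: a sharp concentration inequality for $\|W_{J'}\|_p$ showing that its fluctuations around $\E\|W_{J'}\|_p\asymp 2^{J'/2}$ are only of order $O(1)$, with Gaussian-type tails at that scale, so that $\Pr_f\big(n^{-1/2}\|W_{J'}\|_p>\tau(J')\big)$ is summable over the grid and, since $2^{J^*}\to\infty$, super-polynomially small once $\kappa_p$ is a sufficiently large constant. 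This makes $\Pr_f(\hat J>J^*)$ negligible compared with the rate.

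The main obstacle is precisely this sharp concentration of the $L_p$ norm of the Gaussian field $W_{J'}$. The naive Borell--TIS bound, using only the Lipschitz constant $\asymp 2^{J'/2}$ of $\xi\mapsto\|W_{J'}\|_p$, yields fluctuations of order $2^{J'/2}$ and would force $\kappa_p\asymp\sqrt{\log n}$, reintroducing a spurious logarithmic factor into the main term. The correct scaling is obtained by exploiting that $\|W_{J'}\|_p^p=\int_0^1|W_{J'}(x)|^p\,dx$ is an average of $\asymp 2^{J'}$ weakly correlated contributions (correlation length $\asymp 2^{-J'}$), giving a relative fluctuation $\asymp 2^{-J'/2}$ and hence the $O(1)$ absolute fluctuation that is needed. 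Turning this heuristic into a rigorous exponential tail bound, uniformly over the grid and with all constants depending only on $p$, is the technical heart of the argument and is exactly the feature that separates the regime $p<\infty$ from the regime $p=\infty$.
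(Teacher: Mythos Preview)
Your overall architecture is exactly the paper's: a Lepski-type choice $\hat J$, the split into $\{\hat J\le J^*\}$ and $\{\hat J>J^*\}$, and H\"older on the bad event against a crude moment bound. The discrepancy is entirely in the last paragraph, where you locate the ``main obstacle''.

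You assert that the Borell--TIS variance parameter for $\|W_{J}\|_p$ is $\asymp 2^{J}$ (Lipschitz constant $\asymp 2^{J/2}$), and therefore propose to replace it by a bespoke concentration argument based on averaging $\asymp 2^{J}$ weakly dependent blocks. The variance parameter is miscomputed. Writing $\|W_J\|_p=\sup_{\|g\|_q\le 1}\langle W_J,g\rangle$ with $q=p/(p-1)\le 2$, Parseval gives $\sigma^2=\sup_{\|g\|_q\le 1}\|\Pi_{V_J}g\|_2^2$. For $h\in V_J$ the Bernstein-type inequality (or the embedding $L_q\subset B_{0,q,2}$ for $q\le 2$) yields $\|h\|_2\lesssim 2^{J(1/q-1/2)}\|h\|_q=2^{J(1/2-1/p)}\|h\|_q$, so $\sigma^2\lesssim 2^{J(1-2/p)}$, \emph{not} $2^{J}$. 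Plugging this into Borell--TIS gives
\[
\Pr\!\big(\|W_J\|_p>\E\|W_J\|_p+t\big)\le \exp\!\big(-c\,t^2\,2^{-J(1-2/p)}\big),
\]
and taking $t=c'\,2^{J/2}$ already produces a tail $\exp(-c''\,2^{2J/p})$, which is super-polynomially small and summable over the grid. This is precisely the paper's route (their Proposition~\ref{prop:Borell} and Lemma~\ref{lem:conclp}): the ``naive'' Gaussian concentration, with the correct $\sigma$, is all that is needed, and $\kappa_p$ can be taken as a constant depending only on $p$. Your averaging heuristic may well give the sharper $O(1)$ fluctuation you describe, but it is not required here, and the ``technical heart'' you identify is a phantom created by the wrong $\sigma$.
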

The proof and also construction of this estimate are in Section~\ref{sec:estimation} (it is a Corollary of Theorem~\ref{th:inferenceersatz} with $c_a=0$).

This result is minimax-optimal in $r$ and in $B$ whenever $B\geq 1$ and $r>1/p$ (see~\citep{hardle1998wavelets}). 


\subsection{Previous results regarding honest and adaptive confidence sets}

For $p\geq 1$ (and $p <\infty$), a confidence set is a random subset $C_n$ of $L_p$ that depends on the data and perhaps on some additional knowledge that is available. We define its diameter in $\|.\|_p$ norm as
\begin{equation}\label{eq:diameter}
|C_n| = \inf_{\tau\geq 0}\Big\{ \tau: \exists g \in L_p: C_n \subset \{h: \|h-g\|_p \leq \tau \}\Big\}.
\end{equation}


We define honest and adaptive confidence sets as follows.
\begin{definition}[$(L_p,\alpha)-$honest and adaptive confidence set given $\mathcal P_n$, $I$ and $B$]\label{def:adapthonest}
Let $0< t< s$, $B>0$, $\alpha>0$. Let $I$ be a subset of $[t,s]$. Let $\mathcal P_n$ be a non-empty subset included in $\Sigma(t,B)$. Let $C_n(Y^{(n)},s,t,p,B,\alpha)$ be a random subset of $L_p$. $C_n$ is called $(L_p,\alpha)-$honest and adaptive given $\mathcal P_n$, $I$ and $B$ if there exists a constant $L:=L(s,t,p,B,\alpha)$ such that for any $n \geq 0$
\begin{equation*}
 \sup_{f \in \Sigma(r,B)\bigcap \mathcal P_n} {\Pr}_f\Big(|C_n| \geq L n^{-\frac{r}{2r+1}} \Big) \leq \alpha \hspace{0.2cm} \forall r \in I, \hspace{0.2cm} and \hspace{0.2cm} \inf_{f \in \mathcal P_n} {\Pr}_f \Big(f \in C_n \Big) \geq 1-\alpha.
\end{equation*}
\end{definition}
In this definition, the set $I$ is the set of Besov indexes to which we wish to adapt. We will in this paper consider the case $I=\{s,t\}$, since it is not too involved to pass from this case to the case $[t,s]$ (see e.g.~\citep{hoffmann2011adaptive,bull2011adaptive}). The model $\mathcal P_n$ is the set of functions on which we want to build honest and adaptive confidence sets. Ideally, we would like this set to be $\Sigma(t,B)$, but it will be seen not to always be possible to consider the whole set: some functions of $\Sigma(t,B)$ that are very close to $\Sigma(s,B)$ but not in this set can be a source of problems for the existence of honest and adaptive confidence sets. In some cases, as we will explain later in this section, a subset of $\Sigma(t,B)$ has to be removed.

To the best of our knowledge, the question of building honest and adaptive confidence sets in $L_p$ for $p\geq 1$ and $p <\infty$ has only been addressed in the case $p=2$. The most recent paper on this topic is~\citep{bull2011adaptive}. Also, it is noticeable that the case $p=\infty$ has been treated in the paper~\citep{hoffmann2011adaptive}, but we are not going to present the results in this case here, since they are different in essence from the case $p \in ]1,\infty[$.

The results in the paper~\citep{bull2011adaptive}, although proved in the density estimation setting, apply as well in the Gaussian process setting (it is actually more technical to derive them in the setting of density estimation). One only needs to change slightly the test statistics used. In the case $2t \geq s$, one should use instead of the statistic in Equation 35 in~\citep{bull2011adaptive}
 $$U_n(\hat f_n) = \|\Pi_{V_{j}}\hat f_n(1) -\Pi_{V_{j}}\hat f_n(2)\|_2^2 - \sum_{l \leq j}\frac{|Z_l|}{n},$$
where $\hat f_n(1), \hat f_n(2)$ are estimates of $f$ as in the present paper but computed respectively on the first and second half of the paper, and $j$ is such that $2^j \approx n^{\frac{1}{2t+1/2}}$. When $2t <s$, one should use instead of the statistic in Equation 17 in~\citep{bull2011adaptive}
 $$T_n(g) = \|\Pi_{V_{j}}\hat f_n -g\|_2^2 - \sum_{l \leq j}\frac{|Z_l|}{n},$$
where $\hat f_n$ is an estimate of $f$ as in my paper. These statistics have similar properties than the ones in the paper~\citep{bull2011adaptive}, and similar results hold for confidence sets in this setting. 

The authors of the paper~\citep{bull2011adaptive} first prove the following result when $s\leq 2t$, i.e.~that adaptive and honest confidence sets exist in this case on $\Sigma(t,B)$ itself.
\begin{theorem}[Bull and Nickl (2013)]\label{th:bull2}
Set $p=2$. Let $1/2\leq t< s$. Assume also that $s/2 \leq t$. Let $\mathcal P_n = \Sigma(t,B)$. Let $B>0$ and $\alpha>0$. There exists a $(L_2,\alpha)-$honest and adaptive confidence set given $\mathcal P_n$, $\{s,t\}$ and $B$.
\end{theorem}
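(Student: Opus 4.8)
The plan is to realise $C_n$ as an $L_2$-ball centred at an adaptive estimator, whose radius is chosen by a data-driven test that decides between the two candidate smoothness levels. I would take as centre the adaptive estimator $\tilde f_n$ supplied by Theorem~\ref{th:inference} (with $p=2$), which satisfies $\mathbb{E}_f\|\tilde f_n-f\|_2\lesssim n^{-r/(2r+1)}$ whenever $f\in\Sigma(r,B)$ for $r\in\{s,t\}$, and declare
\[
C_n=\{g\in L_2:\ \|g-\tilde f_n\|_2\le \hat r_n\},\qquad
\hat r_n=\begin{cases} L\,n^{-s/(2s+1)} & \text{if }\Psi_n=0,\\[1mm] L\,n^{-t/(2t+1)} & \text{if }\Psi_n=1,\end{cases}
\]
where $\Psi_n\in\{0,1\}$ tests the composite hypothesis ``$f$ is essentially $s$-smooth'' against ``$f$ is only $t$-smooth''. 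The test is built from $U_n$ at resolution $2^j\approx n^{1/(2t+1/2)}$: since the two sample halves are independent and each $\hat a_{l,k}$ is unbiased for $a_{l,k}$, a direct computation shows that the centering $\sum_{l\le j}|Z_l|/n$ cancels the variance coming from the second half, so that $U_n$ is, conditionally on the first half, an unbiased estimator of the projected squared error $\|\Pi_{V_j}(\tilde f_n-f)\|_2^2$, i.e.\ of the high-frequency energy that an $s$-smooth fit fails to capture. I would then set $\Psi_n=\ind{U_n>\kappa_n}$ for a threshold $\kappa_n$ calibrated just above the stochastic fluctuations of $U_n$.

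Next I would establish two error bounds, uniformly over the relevant classes. For the level (type~I error), under $f\in\Sigma(s,B)$ the quantity $\|\Pi_{V_j}(\tilde f_n-f)\|_2^2$ is negligible, so it suffices to show $U_n$ concentrates below $\kappa_n$ with probability $\ge 1-\alpha/2$; since $U_n$ is a centred Gaussian quadratic form plus a signal-dependent cross term, this follows from a chaos/Hanson--Wright deviation bound, the leading pure-noise fluctuation being of order $2^{j/2}/n$. This dictates the choice $\kappa_n\asymp 2^{j/2}/n\asymp n^{-4t/(4t+1)}$. For the power (type~II error), I would show that whenever $f\in\Sigma(t,B)$ has $\|\Pi_{V_j}(\tilde f_n-f)\|_2^2\gtrsim\kappa_n$, the same concentration forces $U_n>\kappa_n$ and the test rejects with high probability. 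The induced separation rate for the associated $L_2$ testing problem is $\rho_n\asymp\sqrt{\kappa_n}\asymp n^{-2t/(4t+1)}$.

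Coverage then splits into two cases, uniformly in $f\in\Sigma(t,B)=\mathcal P_n$. If $f\in\Sigma(s,B)$, the test accepts with probability $\ge 1-\alpha/2$ and the small radius $Ln^{-s/(2s+1)}$ dominates $\|\tilde f_n-f\|_2$ by the estimator bound, so $f\in C_n$ while the diameter is $\lesssim n^{-s/(2s+1)}$. If $f\in\Sigma(t,B)$, either the test rejects, whence the large radius $Ln^{-t/(2t+1)}$ dominates $\|\tilde f_n-f\|_2$, or it accepts, in which case the power analysis forces $\|\Pi_{V_j}(\tilde f_n-f)\|_2\lesssim\rho_n$, while the high-frequency tail obeys $\|(I-\Pi_{V_j})f\|_2\lesssim B2^{-jt}\asymp n^{-2t/(4t+1)}=\rho_n$; here I would invoke the decisive equivalence
\[
\rho_n\asymp n^{-\frac{2t}{4t+1}}\le n^{-\frac{s}{2s+1}}\iff 2t\ge s,
\]
which is exactly the hypothesis $s/2\le t$, to conclude that the total error is below the small radius and $f\in C_n$ regardless. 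Choosing $L$ large enough absorbs the constants from both the estimator bound and the deviation inequalities.

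The main obstacle is the uniform, composite control of the test: the null is the whole ball $\Sigma(s,B)$ rather than a single function, so the fluctuations of $U_n$ must be bounded uniformly in the unknown signal, where the signal-dependent cross term $\|\Pi_{V_j}(\tilde f_n-f)\|_2/\sqrt n$ competes with the pure-noise fluctuation $2^{j/2}/n$; keeping $\kappa_n$ small enough for adaptivity yet large enough for honest level is the crux. The second delicate point is the exact matching of the detection boundary $\rho_n$ to the coverage requirement $n^{-s/(2s+1)}$, which is precisely what forces the regime $s\le 2t$ and fails once $s>2t$, motivating the removal of critical regions in the complementary regime treated later in the paper. Because $p=2$ renders the relevant functional a tractable quadratic, these deviation computations are classical --- this is exactly the simplification that is lost for general $p$.
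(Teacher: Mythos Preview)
Your approach is in the spirit of the original Bull--Nickl argument and is essentially sound, but it differs from the route the present paper takes in its own proof of the generalisation, Theorem~\ref{th:ssmallseg1} (which specialises to this statement at $p=2$). Both routes centre $C_n$ at the adaptive estimator with a test-driven radius, and both hinge on the same arithmetic equivalence $n^{-t/(2t+1/2)}\lesssim n^{-s/(2s+1)}\iff s\le 2t$. The difference is how the ``gray zone'' --- those $f\in\Sigma(t,B)\setminus\Sigma(s,B)$ on which the test nonetheless accepts --- is covered. You argue directly from the concentration of $U_n$: acceptance forces $\|\Pi_{V_j}(\tilde f_n-f)\|_2\lesssim\rho_n$, and with the bias tail this bounds $\|\tilde f_n-f\|_2$ by $\rho_n\lesssim n^{-s/(2s+1)}$. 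The paper instead uses its test (built from the level-wise statistics $T_n(l)$ and the infimum statistic $\tilde T_n$) only to certify that on this zone $\|f-\Sigma(s,B)\|_p\le\rho_n$, and then invokes Theorem~\ref{th:inferenceersatz} --- the adaptive estimator already attains the $s$-rate on functions merely \emph{close} to $\Sigma(s,B)$ --- to obtain coverage. Your route is more self-contained in $L_2$; the paper's cleanly decouples the testing and estimation ingredients, which is exactly what lets the argument extend to $p>2$, where no unbiased quadratic statistic is available and one cannot read the estimation error off the test statistic.

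One point in your sketch needs tightening. You assert that $U_n$ is conditionally unbiased for $\|\Pi_{V_j}(\tilde f_n-f)\|_2^2$ with $\tilde f_n$ the \emph{adaptive} centre, and then decompose $\|\tilde f_n-f\|_2$ into $\|\Pi_{V_j}(\tilde f_n-f)\|_2+\|(I-\Pi_{V_j})f\|_2$. For this to be legitimate you need $\tilde f_n$ to be computed on the first half (so it is fixed given that half) \emph{and} to satisfy $\tilde f_n\in V_j$, so that $(I-\Pi_{V_j})\tilde f_n=0$. The latter holds only on the event $\{\hat j_n\le j\}$, which is high-probability but not automatic; it requires a side argument from the Lepski analysis (for $f\in\Sigma(t,B)$ the oracle level satisfies $j^*\le j_t<j$, and $\Pr(\hat j_n>j^*)$ is exponentially small). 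Without this, there is a gap between what $U_n$ controls and the quantity $\|\tilde f_n-f\|_2$ needed for coverage. The paper's route through Theorem~\ref{th:inferenceersatz} sidesteps this bookkeeping entirely.
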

In order to build such a confidence set, the authors measure the $L_2$ distance between the data and an estimate of the function $f$ that we can think of as the orthogonal projection of $\hat f_n$ on $\Sigma(s,B)$. They then define (intuitively speaking) the confidence set as being the set of functions that are at a distance that is smaller than this estimated distance from an adaptive estimate of $f$ (as defined in Theorem~\ref{th:inference}). 

It becomes however more involved when $s>2t$ (the authors actually need some additional formalism). For $G \subset L_p$, set $\|f - G\|_p = \inf_{g \in G} \|f-g\|_p$. We define for $\rho_n \geq 0$ and $B> 0$ the sets
\begin{equation}\label{eq:tildesets}
\tilde \Sigma(t, B,\rho_n) = \tilde \Sigma(t,s,p,B,\rho_n) = \Big\{f \in \Sigma(t,B): \|f - \Sigma(s,B)\|_p \geq \rho_n \Big\}.
\end{equation}
These sets are separated away from $\Sigma(s,B)$ whenever $\rho_n >0$.  They correspond to $\Sigma(t,B) \setminus \Sigma(s,B)$ where we have removed some critical functions very close to functions in $\Sigma(s,B)$ in $\|.\|_p$ norm. 
We now remind a simplified and slightly weaker version of the main Theorem in the paper~\citep{bull2011adaptive}'s when $s>2t$ (in the paper~\citep{bull2011adaptive}, the authors actually prove a stronger result, which includes adaptation also to the radius $B$ of the Besov ball). 
\begin{theorem}[Bull and Nickl (2013)]\label{th:bull}
Set $p=2$. Let $B>0$ and $\alpha>0$, and assume that $s/2 > t \geq 1/2$. Let $\mathcal P_n = \tilde \Sigma(t,B,\rho_n) \bigcup \Sigma(s,B)$ for some $\rho_n$.
\begin{itemize}
\item Let $\rho_n = C n^{-t/(2t+1/2)}$, where $C:=C(p,B,\alpha)$ is large enough. Then there exists a $(L_2,\alpha)-$honest and adaptive confidence set given $\mathcal P_n$, $\{s,t\}$ and $B$.
\item Let $\rho_n = \upsilon n^{-t/(2t+1/2)}$, where $\upsilon:=\upsilon(p,B,\alpha)$ is small enough. Then there exists no $(L_2,\alpha)-$honest and adaptive confidence set given $\mathcal P_n$, $\{s,t\}$ and $B$.
\end{itemize}
\end{theorem}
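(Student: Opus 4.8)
The plan is to exploit the equivalence, already central to~\citep{hoffmann2011adaptive,bull2011adaptive}, between the construction of a $(L_2,\alpha)$-honest and adaptive confidence set over $\mathcal P_n = \tilde \Sigma(t,B,\rho_n)\cup\Sigma(s,B)$ and the composite testing problem $H_0: f\in\Sigma(s,B)$ against $H_1: f\in\tilde \Sigma(t,B,\rho_n)$. An honest set whose diameter adapts must effectively decide whether $f$ is $s$-smooth, in which case a ball of radius $\asymp n^{-s/(2s+1)}$ around an adaptive estimator suffices, or only $t$-smooth, in which case radius $\asymp n^{-t/(2t+1)}$ is forced. The separation of the two hypotheses is exactly $\rho_n$, and the theorem asserts that the critical separation rate for this test is $n^{-t/(2t+1/2)}$, the large constant $C$ versus the small $\upsilon$ deciding which side of the boundary one lands on.

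\textbf{Positive direction.} Fix a resolution $j$ with $2^j\asymp n^{1/(2t+1/2)}$, the implied constant calibrated (using that $C$ is large) so that the $\Sigma(t,B)$-tail above level $j$ satisfies $\|(I-\Pi_{V_j})f\|_2\leq\rho_n/4$ for every $f\in\Sigma(t,B)$; this is the decisive point, since it forces any $L_2$-distance $\geq\rho_n$ to $\Sigma(s,B)$ to be \emph{visible} already in $\Pi_{V_j}$. I would then form the infimum statistic $\hat D_n = \inf_{g\in\Sigma(s,B)}T_n(g)$ with $T_n(g) = \|\Pi_{V_j}\hat f_n - g\|_2^2 - \sum_{l\leq j}|Z_l|/n$, note that $\E_f T_n(g) = \|\Pi_{V_j}f - g\|_2^2$, and declare $f$ rough when $\hat D_n$ exceeds a threshold $\kappa_n\asymp\rho_n^2$. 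Under $H_0$ one has $\hat D_n\leq T_n(f)$ with $\E_f T_n(f) = \|(I-\Pi_{V_j})f\|_2^2\lesssim 2^{-2js}$ negligible against $\rho_n^2$ (here $s>2t$ enters), so a Gaussian-chaos concentration bound for the quadratic form $T_n(f)$ at scale $2^{j/2}/n$ gives false alarm $\leq\alpha/2$; under $H_1$ the population infimum is $\geq(\rho_n/2)^2$, which for $C$ large dominates $\kappa_n$ plus the fluctuations, yielding power $\geq1-\alpha/2$. Finally I would set $C_n$ to be the $\|\cdot\|_2$-ball centred at the adaptive estimator $\tilde f_n$ of Theorem~\ref{th:inference}, of radius $L\,n^{-s/(2s+1)}$ when $H_0$ is accepted and $L\,n^{-t/(2t+1)}$ otherwise, and check honesty and the adaptive diameter in each of the two cases using the estimator error bound together with the level and power of the test.

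\textbf{Negative direction.} For the impossibility I argue by contradiction. Given an honest and adaptive $C_n$, consider the event $A = \{f_0\in C_n,\ |C_n|\lesssim n^{-s/(2s+1)}\}$ for the smooth null $f_0 = 0\in\Sigma(s,B)$; honesty and adaptivity give ${\Pr}_{f_0}(A)\geq1-2\alpha$, while on $A$ one has $C_n\subseteq\{h:\|h-f_0\|_2\lesssim n^{-s/(2s+1)}\}$, so any $f_1$ with $\|f_1-f_0\|_2\geq\rho_n\gg n^{-s/(2s+1)}$ satisfies $A\subseteq\{f_1\notin C_n\}$, whence ${\Pr}_{f_1}(A)\leq\alpha$ by honesty at $f_1$. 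Thus $A$ separates $f_0$ from any such rough $f_1\in\mathcal P_n$ with gap $\geq1-3\alpha$. It then suffices to exhibit a prior $\pi$ supported on $\tilde \Sigma(t,B,\rho_n)$ that is indistinguishable from $f_0$: I place random signs on the $\asymp2^j$ wavelet coefficients at level $j$, each of magnitude $\asymp\rho_n 2^{-j/2}$, so that every draw has norm $\asymp\rho_n$, lies in $\Sigma(t,B)$ (the Besov constraint $2^{jt}\|\cdot\|_2\asymp\upsilon$ leaves room since $\upsilon\leq B$), and is at distance $\geq\rho_n$ from $\Sigma(s,B)$ (smooth functions reproduce only an $o(\rho_n)$ fraction of such level-$j$ coefficients, as $s>t$). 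A direct computation of the $\chi^2$-divergence between the induced mixture and $P_{f_0}$ over the independent coordinates $\hat a_{j,k}\sim\mathcal N(a_{j,k},1/n)$ gives $\chi^2 = \left(\cosh(n\rho_n^2 2^{-j})\right)^{2^j}-1 = O(\upsilon^4)$, since $n^2\rho_n^4 2^{-j}\asymp\upsilon^4$ at the detection boundary; hence the total variation distance is $O(\upsilon^2)$, contradicting the gap $1-3\alpha$ once $\upsilon$ is small (and $\alpha<1/3$). Therefore no such $C_n$ exists.

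\textbf{Main difficulty.} The crux is the analysis of the infimum statistic $\hat D_n$: the infimum runs over the infinite-dimensional convex body $\Sigma(s,B)$, and while its expectation is a Hilbert-space projection, bounding $\inf_{g}(T_n(g)-\E_f T_n(g))$ from below requires a localization or peeling argument, since the linear Gaussian term $2\langle\Pi_{V_j}(\hat f_n-f),\Pi_{V_j}f-g\rangle$ is not uniformly small over all of $\Sigma(s,B)$ and is only tamed once one restricts to the $O(\rho_n)$-neighbourhood where the infimum is effectively attained. It is precisely the Hilbertian projection structure at $p=2$, together with sharp quadratic-form concentration, that makes this step and the exact matching of the constants in the upper and lower separation thresholds tractable; this is the structure that fails for $p>2$ and necessitates the heavier machinery announced in the introduction.
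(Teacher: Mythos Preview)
Your proposal is correct and follows essentially the route of the original Bull--Nickl paper to which the theorem is attributed: the infimum test $\hat D_n=\inf_{g\in\Sigma(s,B)}T_n(g)$ for the positive direction, and a Rademacher-sign prior at resolution $2^j\asymp n^{1/(2t+1/2)}$ with a $\chi^2$ computation for the negative one. Your identification of the ``main difficulty'' --- controlling the linear Gaussian term uniformly over $\Sigma(s,B)$ via localization near the Hilbert projection --- is precisely what makes the infimum test tractable at $p=2$ and is also what the present paper explicitly circumvents in its general-$p$ extension.

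It is worth noting that the present paper does not itself prove this theorem (it is quoted as prior work), but its own proof of the generalisation (Theorem~\ref{th:sbig} via Theorem~\ref{th:test}) proceeds differently on both sides. For the upper bound, rather than analysing the full infimum, the paper splits $\|f-\Sigma(s,B)\|_p$ into a low-frequency infimum $\|\Pi_{V_{j_s}}f-\Sigma(s,B)\|_p$ (with $2^{j_s}\asymp n^{1/(2s+1)}$, handled by Borell's inequality) plus level-wise pieces $\|\Pi_{W_l}f\|_p$ for $j_s<l\leq j$, each tested by the debiased statistics $T_n(l)=\sum_k 2^{l(p/2-1)}\hat F_p^p(l,k)$; this sidesteps the localization/peeling you flag. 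For the lower bound, the paper uses a \emph{sparse} Bernoulli prior (each coefficient at level $j$ active with probability $2^{-j/2}$, amplitude $\upsilon n^{-1/2}$) rather than dense Rademacher signs; both priors yield the same $n^{-t/(2t+1/2)}$ rate at $p=2$, but only the sparse one gives the correct $L_p$-norm calibration for general $p$. What your approach buys is brevity and a clean Hilbert-space picture; what the paper's approach buys is a proof that carries over to $p>2$, where neither the quadratic-form concentration nor the dense-sign $\chi^2$ computation gives the right rate.

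One minor remark: your parenthetical ``here $s>2t$ enters'' is slightly misplaced. Under $H_0$ the bias $\|(I-\Pi_{V_j})f\|_2^2\lesssim 2^{-2js}$ is already negligible against $\rho_n^2$ whenever $s>t$. The condition $s>2t$ enters instead to guarantee $\rho_n=n^{-t/(2t+1/2)}\gg n^{-s/(2s+1)}$, which is what makes the reduction from confidence set to test (in your negative direction) non-vacuous and what distinguishes this regime from the $s\leq 2t$ case where no separation is needed.
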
 
In order to prove this theorem, the authors consider the following testing problem:
\begin{equation}\label{eq:testhaha}
H_0: f \in \Sigma(s,B) \hspace{1cm}v.s. \hspace{1cm} H_1: f \in \Sigma(t, B, \rho_n).
\end{equation}
As explained in~\cite{ hoffman2002random, juditsky2003nonparametric, hoffmann2011adaptive, bull2011adaptive}, this problem is very related to the problem of building honest and adaptive confidence sets. In the process of constructing honest and adaptive confidence sets, the authors of the paper~\citep{bull2011adaptive} construct a test for the testing problem~\eqref{eq:testhaha} that is uniformly consistent over $\tilde \Sigma(t,B,\rho_n) \bigcup \Sigma(s,B)$ for 
\begin{equation*}
\rho_n = C \max(n^{-t/(2t+1/2)}, n^{-s/(2s+1)}),
\end{equation*}
where $C$ large enough ($\rho_n$ is of order $n^{-t/(2t+1/2)}$ whenever $s\geq 2t$). More precisely, they prove that for any $n>0$ and $\alpha>0$, there exists a test $\Psi_n$ such that
\begin{equation}\label{eq:consis2}
\sup_{f \in \Sigma(s,B)} \mathbb E_f \Psi_n + \sup_{f \in \tilde \Sigma(t,B,\rho_n)} \mathbb E_f [1-\Psi_n] \leq \alpha.
\end{equation}
They then use this result to prove Theorem~\ref{th:bull}.

An indirect consequence of Theorem~\ref{th:bull} is that the case $1 \leq p \leq 2$ is not relevant, since the following negative result applies.
\begin{proposition}\label{prop:psmall}
Let $1 \leq p \leq 2$. Let $B>0$ and $\alpha>0$, and assume that $s> 2t \geq 1/p$. Let $\mathcal P_n = \tilde \Sigma(t,B,\rho_n) \bigcup \Sigma(s,B) \subset L_2$ for some $\rho_n$.  Let $\rho_n = \upsilon n^{-t/(2t+1/2)}$, where $\upsilon:=\upsilon(p,B,\alpha)$ is small enough.  There exists no $(L_p,\alpha)-$honest and adaptive confidence set given $\mathcal P_n$, $\{s,t\}$ and $B$.
\end{proposition}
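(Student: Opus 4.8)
The plan is to argue by contradiction, reducing the non-existence of an honest and adaptive confidence set to the statistical indistinguishability of two well-chosen experiments, which is exactly the mechanism underlying the negative part of Theorem~\ref{th:bull}. Suppose that $C_n$ were an $(L_p,\alpha)$-honest and adaptive confidence set given $\mathcal P_n$, $\{s,t\}$ and $B$, with $\rho_n = \upsilon n^{-t/(2t+1/2)}$. I would take the null function $f_0 = 0 \in \Sigma(s,B)$, together with a family of rough alternatives supported at a single resolution level $j$ with $2^j \asymp n^{1/(2t+1/2)}$, namely $f_\epsilon = \gamma_n \sum_{k \in Z_j} \epsilon_k \psi_{j,k}$ for $\epsilon \in \{-1,+1\}^{Z_j}$, where the amplitude $\gamma_n \asymp \rho_n 2^{-j/2}$ is tuned so that $\|f_\epsilon\|_p \asymp \rho_n$. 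This is precisely the configuration used in~\citep{bull2011adaptive} for $p=2$; the only point that has to be revisited is that it still does the right thing in $\|\cdot\|_p$.

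The core of the argument uses only honesty and $\|\cdot\|_p$-adaptivity at the single point $f_0$. By adaptivity at $r=s$ one gets $\Pr_{f_0}(|C_n| \le L n^{-s/(2s+1)}) \ge 1-\alpha$, and by honesty $\Pr_{f_0}(0 \in C_n) \ge 1-\alpha$; intersecting, with $\Pr_{f_0}$-probability at least $1-2\alpha$ we have $C_n \subset \{h : \|h\|_p \le 2L n^{-s/(2s+1)}\}$. Crucially, $s > 2t$ is equivalent to $s/(2s+1) > t/(2t+1/2)$, so $n^{-s/(2s+1)} = o(\rho_n)$ and, for $n$ large, every $f_\epsilon$ (which has $\|f_\epsilon\|_p \ge \rho_n$) lies outside this ball. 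Hence the fixed, $\epsilon$-independent event $A = \{C_n \subset \{h: \|h\|_p \le 2Ln^{-s/(2s+1)}\}\}$ satisfies $A \subset \{f_\epsilon \notin C_n\}$ for every $\epsilon$, with $\Pr_{f_0}(A) \ge 1-2\alpha$.

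It then remains to transport $A$ to the alternatives. Writing $\overline{\Pr} = 2^{-|Z_j|}\sum_\epsilon \Pr_{f_\epsilon}$ for the mixture, the likelihood-ratio computation in the Gaussian experiment gives $\chi^2(\overline{\Pr},\Pr_{f_0}) = \cosh(n\gamma_n^2)^{|Z_j|} - 1$, and the choice of $j$ and $\gamma_n$ makes $2^j (n\gamma_n^2)^2 \asymp \upsilon^4$, so that $\overline{\Pr}$ and $\Pr_{f_0}$ are within total variation $\delta \lesssim \upsilon^2$. This computation depends on the $f_\epsilon$ only through $\|f_\epsilon\|_2$ and is identical to the one in~\citep{bull2011adaptive}. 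Consequently $\overline{\Pr}(A) \ge 1-2\alpha-\delta$, so some $\epsilon^\ast$ has $\Pr_{f_{\epsilon^\ast}}(A) \ge 1-2\alpha-\delta$ and therefore $\Pr_{f_{\epsilon^\ast}}(f_{\epsilon^\ast}\in C_n) \le 2\alpha+\delta$, contradicting honesty at $f_{\epsilon^\ast}$ (which forces $\ge 1-\alpha$) as soon as $\upsilon$, hence $\delta$, is small enough.

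The one genuinely $p$-dependent step, and the place where I expect the main work, is checking that this construction is admissible for the $L_p$ problem when $1\le p\le 2$: namely that each $f_\epsilon$ lies in $\Sigma(t,p,B)$ and, more delicately, that $\|f_\epsilon - \Sigma(s,p,B)\|_p \ge \rho_n$, i.e.\ that no function of the smooth ball approximates the purely high-frequency $f_\epsilon$ to within $\rho_n$. For the equal-magnitude single-level blocks above one has, on $[0,1]$, the two-sided equivalence $\|f_\epsilon\|_p \asymp 2^{j(1/2-1/p)}\,|(\gamma_n\epsilon_k)_k|_{l_p} \asymp \gamma_n 2^{j/2} \asymp \|f_\epsilon\|_2$ for every $p\in[1,2]$, so that the membership, separation and $\chi^2$ estimates reduce, up to constants, to the $p=2$ statements already established in~\citep{bull2011adaptive}. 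This is exactly why the threshold $n^{-t/(2t+1/2)}$ and the impossibility below it are inherited from the case $p=2$, and why the regime $1\le p\le 2$ exhibits no new phenomenon.
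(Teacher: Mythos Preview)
Your proposal is correct and follows essentially the same approach as the paper: the paper's proof is a one-line remark that the Bull--Nickl $L_2$ lower-bound construction goes through unchanged because the $L_p$ norms of the single-level, equal-magnitude alternatives $f_\epsilon$ coincide (up to constants) with their $L_2$ norms for every $p\in[1,2]$, and you have spelled out that argument in full detail, including the Besov-norm calculations that make the membership and separation checks $p$-independent.
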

The proof is the same in~\citep{bull2011adaptive} for the $L_2$ case, since the $L_2$ norms of the functions constructed to prove the impossibility result in $L_2$ are equal to the $L_p$ norm of these same functions. Since the $L_2$ norm dominates all $L_p$ norms for $p \leq 2$, Proposition~\ref{prop:psmall} implies in particular that the the confidence sets, and the regions one has to remove in order to be able to build honest and adaptive confidence sets, are \textit{both} larger in $L_p$ than in $L_2$, noting that $f \in L^2$ is a natural assumption in Gaussian white noise. It is thus preferable to use for $L_p$ the same confidence sets as for $L_2$ when $p \leq 2$. The case $p\geq 2$ (and $p <\infty$) is more interesting and is the setting we shall consider more in depth. 

Concerning lower bounds for general $p\geq 1$, the papers~\citep{ingster1987minimax, ingster1993asymptotically,ingster2002nonparametric} state that for a simpler but related testing problem
\begin{equation*}
H_0: f=0 \hspace{5mm} vs. \hspace{5mm} H_1: f \in \{f\in \Sigma(t,B): \|f - 0\|_p \geq \rho_n \},
\end{equation*}
the minimax rate of separation is 
\begin{equation}\label{hhjh}
\rho_n \geq Dn^{-\frac{t}{2t+1-1/p}},
\end{equation}
for some $D>0$, which coincides with the size of the region one has to remove in Theorem~\ref{th:bull} for $p=2$. One can wonder if it is still the minimax rate in the composite problem. The minimax order of the separation $\rho_n$ in Theorem~\ref{th:bull} is related to the  results of paper~\citep{lepski1999estimation}, where the authors prove that whenever  $p$ is an \textit{even} integer, it is possible to construct an estimate of $\|f\|_p$ whose error is of order $n^{-\frac{t}{2t+1-1/p}}$. In $L_2$, as proved in the paper~\citep{bull2011adaptive}, empirical process theory combined with this idea imply that it is possible to estimate $\|f-\Sigma(s,B)\|_2$ with precision $\max(n^{-\frac{t}{2t+1/2}}, n^{-\frac{s}{2s+1}})$. As in the paper~\citep{bull2011adaptive}, this estimate can be used to test whether $f$ is in $\Sigma(s,B)$ or not, and then to construct honest and adaptive confidence sets. However, \cite{lepski1999estimation} also provide negative results whenever $p$ is not an even integer and prove that in this case, it is not possible to estimate $\|f\|_p$ at a better rate than $(n \log(n))^{-\frac{t}{2t+1}}$. One might worry that this poor rate has repercussions on the construction of confidence sets.  By developping a new technique with respect to what was achieved in the paper~\citep{bull2011adaptive}, we will prove that it is not the case.








\subsection{Honest and adaptive confidence sets}\label{ss:exist}

We first state our results in terms of the testing problem~\eqref{eq:testhaha}, and then apply these results to the construction of honest and adaptive confidence sets. As a matter of fact, the results we provide for the existence of honest and adaptive confidence sets are a direct consequence of the solution to the testing problem~\eqref{eq:testhaha}. 


\subsubsection{Testing bounds on the related testing problem}

We state the following theorem for the existence of a uniformly consistent test (in the sense of Equation~\eqref{eq:consis2}) for the testing problem~\eqref{eq:testhaha}.
\begin{theorem}\label{th:test}
Let $B>0$, $\alpha>0$ and $n>0$.
\begin{itemize}
\item Let $\rho_n = C(B+1) \max(n^{-t/(2t+1-1/p)}, n^{-s/(2s+1)})$, where $C:=C(p,\alpha)$ is large enough. Then there exists a $(L_p,\alpha)-$uniformly consistent test (in the sense of Equation~\eqref{eq:consis2}) for the testing problem~\eqref{eq:testhaha}.
\item Let $\rho_n = \upsilon n^{-t/(2t+1-1/p)}$, where $\upsilon:=\upsilon(p,B,\alpha)$ is small enough.  Then there exists no $(L_p,\alpha)-$uniformly consistent test (in the sense of Equation~\eqref{eq:consis2}) for the testing problem~\eqref{eq:testhaha}.
\end{itemize}
\end{theorem}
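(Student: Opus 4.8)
The plan is to treat the two assertions separately, since they rest on genuinely different tools: the positive part requires an explicit construction of a test, and the negative part an indistinguishability argument between two priors. Throughout I work at a critical resolution level $l^*$ with $2^{l^*}\asymp n^{1/(2t+1-1/p)}$, which is the level at which the $\|.\|_p$-tail of a $\Sigma(t,B)$ function first drops to the order of the target separation $\rho_n$. The whole difficulty, compared to the $L_2$ case of \citep{bull2011adaptive}, is that there is no Parseval identity linking $\|.\|_p$ to the wavelet coefficients, so the infimum-test cannot be transported directly.

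For the positive part I would construct the test as follows. Set $q=\lfloor p\rfloor$, the largest even integer below $p$. For each level $l$ in a dyadic band whose lower end is the $s$-resolution level $2^{l_s}\asymp n^{1/(2s+1)}$ and whose upper end is $l^*$, I form a $U$-statistic $\hat S_l$ of order $q$ in the empirical coefficients $\hat a_{l,k}=a_{l,k}+n^{-1/2}\xi_{l,k}$ that is an unbiased estimator of the level energy $S_l=\sum_{k\in Z_l} a_{l,k}^{q}$. Because $q$ is even, $a_{l,k}^{q}=|a_{l,k}|^{q}$ is a genuine polynomial in the means, so such an unbiased estimator exists — this is precisely where the even integer $\lfloor p\rfloor$ enters, and why the logarithmic loss of \citep{lepski1999estimation} for non-even $p$ is avoided: I never attempt to estimate $\|f\|_p$ itself. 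The test rejects $H_0$ as soon as some $\hat S_l$ exceeds a threshold $\tau_l$ of the order of its null standard deviation, namely $\tau_l\asymp 2^{l/2}n^{-q/2}$ (the size of the pure-noise Wiener chaos part of the $U$-statistic over the $\asymp 2^l$ coefficients), inflated by a $B^{q}$-factor to cover the whole composite null $\Sigma(s,B)$.

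The analysis then splits in two. For the size, I would bound the fluctuations of each $\hat S_l$ uniformly over $f\in\Sigma(s,B)$ by concentration for degenerate $U$-statistics of Gaussian chaoses, controlling the pure-noise term and the signal$\times$noise cross-terms through the Besov bound $|(a_{l,k})_k|_{l_p}\leq B2^{-l(s+1/2-1/p)}$ and Assumption~\ref{ass:wav1}; a union bound over the $O(\log n)$ levels gives overall level $\leq\alpha$. For the power, I must show that $\|f-\Sigma(s,B)\|_p\geq\rho_n$ forces at least one $S_l$ above $\tau_l$. Here I would pass from the $\|.\|_p$-separation to the coefficient energies via the Littlewood--Paley/Besov norm equivalence, and from the $l_p$-energy to the $l_q$-energy $S_l$ using the finite-dimensional comparison $|u|_{l_p}\leq|u|_{l_q}\leq (2^l)^{1/q-1/p}|u|_{l_p}$ on each level of dimension $\asymp 2^l$, tracking the scale factors so that the surviving quantity is still $\gtrsim\tau_l$ once $C$ is large; the margin needed is exactly $q\leq p$, which holds by definition of $\lfloor p\rfloor$. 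I expect \textbf{this conversion — controlling a $\|.\|_p$-separation through even-order ($\lfloor p\rfloor$) energies across scales, for non-even $p$, while the dimension factors $2^{l(1/q-1/p)}$ and the loss of orthogonality work against the target rate — to be the main obstacle}, and the point at which the technique genuinely departs from \citep{bull2011adaptive}.

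For the negative part I would use the reduction of testing lower bounds to the indistinguishability of the alternative from a single null point, which is what exploits the composite structure of $H_0$. I would fix a base function $f_0\in\Sigma(s,B)$ saturating the $\Sigma(s,B)$ constraint at level $l^*$, and let the alternative prior perturb a $p$-dependent number of the level-$l^*$ coefficients of $f_0$ by independent random signs of magnitude $\mu$. The magnitude $\mu$ is taken as large as the per-coordinate signal-to-noise $\mu n^{1/2}$ allows while keeping the product chi-square between the alternative mixture and the null point $f_0$ bounded, i.e. $\prod_k\cosh(\mu^2 n)=O(1)$ over the active coordinates, and simultaneously so that (i) the perturbed functions stay in $\Sigma(t,B)$ and (ii) their $\|.\|_p$-distance to $\Sigma(s,B)$ remains $\geq\rho_n=\upsilon n^{-t/(2t+1-1/p)}$; computing that distance for the perturbed block and checking the Besov membership pins down the admissible $\upsilon$. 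Since the chi-square is $O(1)$, no test can separate the two with combined error below $\alpha$ once $\upsilon$ is small. The delicate point, mirroring the upper bound, is that the extremal configuration — how many coefficients to activate, and at exactly which level — is governed by $p$, and must be chosen so that the indistinguishable separation reaches the full order $n^{-t/(2t+1-1/p)}$ rather than the smaller $L_2$-type order produced by a naive dense perturbation against the zero function.
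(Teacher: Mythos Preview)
Your plan names the real obstacle but does not close it, and the paper's argument differs from yours at exactly that point.

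\textbf{Upper bound.} Estimating $S_l=\sum_k a_{l,k}^{q}$ with $q=\lfloor p\rfloor$ and converting via $|u|_{l_p}\le |u|_{l_q}\le 2^{l(1/q-1/p)}|u|_{l_p}$ cannot give the stated rate. The failure is on the size side: under $H_0$ you only control $|(a_{l,k})_k|_{l_p}\le B\,2^{-l(s+1/2-1/p)}$, so your bound on $\E S_l$ carries the full dimension factor $2^{l(1-q/p)}$ with no compensating power of $n$; already at $l=j_s$ this signal dominates your pure-noise threshold by a positive power of $n$, and the summed threshold exceeds the target $\rho_n$ polynomially. The paper does \emph{not} reduce to the even power. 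It builds for each $(l,k)$ an estimate $\hat F_p^p(l,k)$ of $|a_{l,k}|^{p}$ itself: starting from $|\hat a_{l,k}|^{p}$, it subtracts recursively-debiased estimates of the even-order Taylor terms up to order $\lfloor p\rfloor-2$, and a Taylor expansion of $m\mapsto\E|m+G/\sqrt n|^{p}$ yields the two-sided bound
\[
D^{(m)}|a_{l,k}|^{p}\ \le\ \E \hat F_p^p(l,k)\ \le\ D^{(M)}\Big(|a_{l,k}|^{p}+|a_{l,k}|^{\lfloor p\rfloor}\,n^{-(p-\lfloor p\rfloor)/2}\Big).
\]
The lower bound is the genuine $p$-th power, so the $H_1$ step needs no norm conversion; and the only ``wrong-power'' term in the upper bound already arrives multiplied by $n^{-(p-\lfloor p\rfloor)/2}$, which exactly neutralises the H\"older factor $2^{l(p-\lfloor p\rfloor)/p}$ when one sums over $k$ under $H_0$. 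That cancellation is precisely how the non-even case avoids the penalty you worry about; it is absent from a pure even-moment $U$-statistic. A secondary omission: your per-level tests only probe $l\ge j_s$, but under $H_1$ the separation can sit entirely in the low frequencies. The paper handles this with a separate infimum statistic $\tilde T_n=\inf_{g\in\Sigma(s,B)}\|\Pi_{V_{j_s}}\hat f_n-g\|_p$, controlled by Borell's inequality at the coarse scale where it is cheap.

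\textbf{Lower bound.} Random signs on a \emph{fixed} set of $m$ coordinates give chi-square $\cosh(n\mu^2)^{m}-1$, which forces either $m=O(1)$ (separation of order $n^{-t/(2t+1-2/p)}$) or $n\mu^2=O(m^{-1/2})$ (the $L_2$-type separation $n^{-t/(2t+1/2)}$); for $p>2$ neither reaches $n^{-t/(2t+1-1/p)}$. The paper's prior is different in kind: take $f_0=0$ (no saturating base function is needed), level $j$ with $2^j\asymp n^{1/(2t+1-1/p)}$, and set each coefficient independently to $0$ or $\upsilon n^{-1/2}$ with probability $2^{-j/2}$ --- a sparse Bernoulli with \emph{random} support. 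The chi-square is then $(1+2^{-j}(e^{\upsilon^2}-1))^{2^j}-1=O(\upsilon^2)$, which keeps the per-coordinate amplitude at the noise level while $\asymp 2^{j/2}$ coordinates are active; and each draw lies in $\tilde\Sigma(t,B,\rho_n)$ with $\rho_n\asymp\upsilon\,n^{-t/(2t+1-1/p)}$. It is the randomness of the support, not of the signs, that produces the correct $p$-dependent exponent.
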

Theorem~\ref{th:test} is proven in Section~\ref{sec:adaptiv1} (upper bound), and in Section~\ref{sec:lowbound} (lower bound).

There is a gap between the upper and lower bound for $\rho_n$, which is not relevant, as we will see in next paragraph, for the existence of honest and adaptive confidence sets, but which matters for the testing problem. As a matter of fact, this gap does not exist whenever $p$ is an even integer and it is possible to prove that a consistent test with 
\begin{equation*}
\rho_n \geq D n^{-\frac{t}{2t+1 - 1/p}},
\end{equation*}
exists (see~\citep{carpentier2013testing} for $p=2$, and the results can be extended for any even integer $p$). The case when $p$ is not an even integer is more involved and we conjecture that it is \textit{not} possible to build a consistent test whenever 
\begin{equation*}
\rho_n \leq D'  \max(n^{-\frac{s}{2s+1}}, n^{-\frac{t}{2t+1 - 1/p}}),
\end{equation*}
for $D'$ small enough. This remains an open question.

\subsubsection{Consequences for confidence sets}

A first consequence of Theorem~\ref{th:test} is the following theorem when $s(1-1/p) \leq t$ (analog to $s \leq 2t$ when $p=2$ in~\citep{bull2011adaptive}, Theorem~\ref{th:bull2}).
\begin{theorem}\label{th:ssmallseg1}
Let $1/2\leq t< s$. Assume also that $s(1-1/p) \leq t$. Let $\mathcal P_n = \Sigma(t,B)$. Let $B>0$ and $\alpha>0$. There exists a $(L_p,\alpha)-$honest and adaptive confidence set given $\mathcal P_n$, $\{s,t\}$ and $B$.
\end{theorem}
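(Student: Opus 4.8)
The plan is to reduce the construction to the testing result of Theorem~\ref{th:test} combined with the adaptive estimator of Theorem~\ref{th:inference}, in the spirit of the $p=2$ argument of Bull and Nickl underlying Theorem~\ref{th:bull2}. First I would record the arithmetic fact that drives the whole regime: $s(1-1/p)\leq t$ is equivalent to $t/(2t+1-1/p)\geq s/(2s+1)$, so that $n^{-t/(2t+1-1/p)}\leq n^{-s/(2s+1)}$ and the separation radius of Theorem~\ref{th:test} reduces to the estimation rate at smoothness $s$, namely $\rho_n = C(B+1)n^{-s/(2s+1)}$. This is exactly why no critical functions need to be removed here: the band of $f\in\Sigma(t,B)$ with $\|f-\Sigma(s,B)\|_p<\rho_n$ has width comparable to $n^{-s/(2s+1)}$, hence such functions can still be captured by a ball of that radius.

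Concretely, I would fix $j_s$ with $2^{j_s}\approx n^{1/(2s+1)}$, introduce the adaptive estimator $\tilde f_n$ of Theorem~\ref{th:inference} and the linear smoothness-$s$ estimator $\hat{g}_n=\Pi_{V_{j_s}}\hat f_n$, run the test $\Psi_n$ of Theorem~\ref{th:test} for $H_0:f\in\Sigma(s,B)$ against $H_1:f\in\tilde\Sigma(t,B,\rho_n)$ at a level $\alpha'$ to be fixed, and set
\begin{equation*}
C_n = \begin{cases} \{h\in L_p:\|h-\hat{g}_n\|_p\leq L_s\, n^{-s/(2s+1)}\}, & \Psi_n=0,\\ \{h\in L_p:\|h-\tilde f_n\|_p\leq L_t\, n^{-t/(2t+1)}\}, & \Psi_n=1,\end{cases}
\end{equation*}
with $L_s,L_t$ large constants depending on $p,B,\alpha$. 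Since $t<s$ gives $n^{-s/(2s+1)}\leq n^{-t/(2t+1)}$, both radii are at most $L_t n^{-t/(2t+1)}$, so the diameter bound for $r=t$ in Definition~\ref{def:adapthonest} holds deterministically; the sharper bound $|C_n|\lesssim n^{-s/(2s+1)}$ required for $r=s$ holds on $\{\Psi_n=0\}$, whose complement has probability at most $\alpha'$ for $f\in\Sigma(s,B)$ by the Type~I control of $\Psi_n$. The expectation bounds of Theorem~\ref{th:inference} are turned into high-probability statements by Markov's inequality, and the finitely many levels $n<N_p$ are absorbed into the final constant $L$.

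The substance is the coverage statement, which I would split by the position of $f$. If $f\in\tilde\Sigma(t,B,\rho_n)$ (genuinely far), the Type~II control gives $\Psi_n=0$ with probability at most $\alpha'$, and on $\{\Psi_n=1\}$ the ball around $\tilde f_n$ covers since $\|\tilde f_n-f\|_p\lesssim n^{-t/(2t+1)}$ for $f\in\Sigma(t,B)$; this is the one region where the test is essential, precisely to forbid the small ball. If $f\in\Sigma(s,B)\subseteq\Sigma(t,B)$, no use of the test is needed for coverage: on $\{\Psi_n=0\}$ the smooth estimator covers because $\|\hat{g}_n-f\|_p\lesssim n^{-s/(2s+1)}$, and on $\{\Psi_n=1\}$ the larger ball around $\tilde f_n$ covers a fortiori. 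The delicate case is the critical band $0<\|f-\Sigma(s,B)\|_p<\rho_n$, where the test may go either way; here I would prove that both balls cover, so the test outcome is irrelevant. Writing $g^*$ for the $L_p$-projection of $f$ onto $\Sigma(s,B)$ and $D=\|f-g^*\|_p<\rho_n$, the triangle inequality, boundedness of $\Pi_{V_{j_s}}$ on $L_p$, and the standard bias/stochastic split yield the covering lemma $\|\hat{g}_n-f\|_p\lesssim D+n^{-s/(2s+1)}\lesssim n^{-s/(2s+1)}$ with high probability, so the small ball covers on $\{\Psi_n=0\}$ while the large ball covers on $\{\Psi_n=1\}$. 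Allocating the error budget among the Type~I and Type~II errors of $\Psi_n$ and the two estimator tails and summing gives coverage at least $1-\alpha$.

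The main obstacle is this covering lemma in the critical band, and it is where the hypothesis $s(1-1/p)\leq t$ is used in an essential way: only because $\rho_n$ coincides with $n^{-s/(2s+1)}$ can a ball of that radius, centered at a smoothness-$s$ reconstruction, capture a function that is merely $\rho_n$-close to $\Sigma(s,B)$. Establishing $\|\hat{g}_n-f\|_p\lesssim D+n^{-s/(2s+1)}$ uniformly requires $L_p$-control of the linear wavelet estimator, splitting $\hat{g}_n-g^*$ into its stochastic fluctuation (of order $\sqrt{2^{j_s}/n}\approx n^{-s/(2s+1)}$), the projected deviation $\|\Pi_{V_{j_s}}(f-g^*)\|_p\lesssim D$, and the level-$j_s$ bias $\|\Pi_{V_{j_s}}g^*-g^*\|_p\lesssim B\,2^{-j_s s}\approx n^{-s/(2s+1)}$; this is exactly where Assumption~\ref{ass:wav1} and the standard wavelet concentration inequalities deferred to the Supplementary Material enter. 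Once the lemma and the error allocation are in place, the theorem follows.
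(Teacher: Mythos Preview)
Your argument is correct and follows the same testing-plus-estimation architecture as the paper, but you take a genuinely different route on the key step. The paper centers the confidence ball at the \emph{adaptive} estimator $\tilde f_n$ in both regimes (only the radius switches with $\Psi_n$), and handles the critical band $\{f:\|f-\Sigma(s,B)\|_p<\rho_n\}$ by invoking a strengthened adaptive result (Theorem~\ref{th:inferenceersatz}): the Lepski-type estimator already attains the rate $n^{-s/(2s+1)}$ uniformly over functions that are merely $\rho_n$-close to $\Sigma(s,B)$, not just over $\Sigma(s,B)$ itself. You instead switch centers, using the linear projection $\hat g_n=\Pi_{V_{j_s}}\hat f_n$ when $\Psi_n=0$, and prove an ad hoc covering lemma for $\hat g_n$ in the critical band via the triangle inequality, $L_p$-boundedness of $\Pi_{V_{j_s}}$, and the standard bias/variance split.

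What each buys: your route is more elementary in that it avoids the extra adaptive-estimation statement (Theorem~\ref{th:inferenceersatz}) and makes the dependence on the hypothesis $s(1-1/p)\leq t$ completely transparent at the point where you use $\rho_n\asymp n^{-s/(2s+1)}$. The paper's route is cleaner structurally---one center, one estimator---and packages the covering-in-the-critical-band argument into a reusable result about robustness of adaptive estimators; in particular it does not need to verify the $L_p$-operator bound for $\Pi_{V_{j_s}}$ separately (that fact is absorbed into the proof of Theorem~\ref{th:inferenceersatz} via Lemma~\ref{lem:approx} in the Supplementary Material). Both arguments treat $\Sigma(s,B)$ and the critical band in one stroke once the right estimation statement is in hand; your three-case split is equivalent but slightly more verbose.
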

The proof that we provide in this paper is different from the proof in paper~\citep{bull2011adaptive}, and is in Section~\ref{sec:ssmallseg1}.
We recover the results in the paper~\citep{bull2011adaptive} for $p=2$ (Theorem~\ref{th:bull2}).

Another direct consequence of Theorem~\ref{th:test} is, in the case $s(1-1/p) > t$, the minimax-optimal order of $\rho_n$ for which the construction of confidence sets is made possible on the set $I = \{s,t\}$.
\begin{theorem}\label{th:sbig}
Let $B>0$ and $\alpha>0$, and assume that $s(1-1/p) > t \geq 1/2$. Let $\mathcal P_n = \tilde \Sigma(t,B,\rho_n) \bigcup \Sigma(s,B)$ for some $\rho_n$. 
\begin{itemize}
\item Let $\rho_n = C(B+1) n^{-t/(2t+1-1/p)}$, where $C:=C(p,\alpha)$ is large enough. Then there exists a $(L_p,\alpha)-$honest and adaptive confidence set given $\mathcal P_n$, $\{s,t\}$ and $B$.
\item Let $\rho_n = \upsilon n^{-t/(2t+1-1/p)}$, where $\upsilon:=\upsilon(p,B,\alpha)$ is small enough. Then there exists no $(L_p,\alpha)-$honest and adaptive confidence set given $\mathcal P_n$, $\{s,t\}$ and $B$.
\end{itemize}
\end{theorem}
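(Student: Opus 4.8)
The plan is to deduce both parts of the theorem from the testing result of Theorem~\ref{th:test}, exploiting the fact that in the present regime the two candidate rates collapse to one. A direct computation shows that $\frac{t}{2t+1-1/p}<\frac{s}{2s+1}$ is equivalent to $t<s(1-1/p)$, so under the hypothesis $s(1-1/p)>t$ one has $n^{-t/(2t+1-1/p)}\ge n^{-s/(2s+1)}$, and the maximum appearing in the upper bound of Theorem~\ref{th:test} is attained by the term $n^{-t/(2t+1-1/p)}$. Thus the separation rate of the confidence-set statement coincides with the testing separation rate, and the whole argument reduces to transferring information between the two problems.

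For the positive part I would combine the consistent test $\Psi_n$ supplied by Theorem~\ref{th:test} (first bullet, with $\rho_n=C(B+1)n^{-t/(2t+1-1/p)}$) with the adaptive estimator $\tilde f_n$ of Theorem~\ref{th:inference}. Concretely, set $C_n=\{h:\|h-\tilde f_n\|_p\le\hat r_n\}$, where the data-driven radius is $\hat r_n=L_s n^{-s/(2s+1)}$ on $\{\Psi_n=0\}$ and $\hat r_n=L_t n^{-t/(2t+1)}$ on $\{\Psi_n=1\}$, with $L_s,L_t$ chosen (via Markov applied to the estimation bound of Theorem~\ref{th:inference}) so that $\|\tilde f_n-f\|_p\le L_r n^{-r/(2r+1)}$ holds with probability $\ge 1-\alpha/2$ whenever $f\in\Sigma(r,B)$. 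Coverage then follows by splitting on the test outcome: for $f\in\Sigma(s,B)$ the binding error is the estimation event at the smaller radius (the larger radius only helps), while for $f\in\tilde\Sigma(t,B,\rho_n)$ one controls the event that $\Psi_n$ wrongly accepts by its small type-II error together with the estimation event at radius $L_t n^{-t/(2t+1)}$. Adaptivity of the diameter $|C_n|=2\hat r_n$ is then immediate: the case $r=t$ is trivial since $\hat r_n\le L_t n^{-t/(2t+1)}$ always, and the case $r=s$ uses that for $f\in\Sigma(s,B)$ the test accepts ($\Psi_n=0$) with probability $\ge 1-\alpha$, forcing the small radius $L_s n^{-s/(2s+1)}$.

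For the negative part the natural idea is again a reduction to the testing lower bound, but the naive "threshold the diameter" reduction is not sharp, so I would argue directly with the hard instances underlying Theorem~\ref{th:test} (second bullet). Assume a $(L_p,\alpha)$-honest and adaptive $C_n$ exists for $\rho_n=\upsilon n^{-t/(2t+1-1/p)}$. Fix a smooth reference $f_0\in\Sigma(s,B)$; honesty and adaptivity at $f_0$ give, on an event $E_0$ with $\Pr_{f_0}(E_0)\ge 1-2\alpha$, that $f_0\in C_n$ and $|C_n|\le L n^{-s/(2s+1)}$, hence $C_n\subseteq\{h:\|h-f_0\|_p\le 2L n^{-s/(2s+1)}\}$. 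Now invoke the construction from the proof of the testing lower bound: a family $(f_j)_{j\le N}\subset\tilde\Sigma(t,B,\rho_n)$, each satisfying $\|f_j-\Sigma(s,B)\|_p\ge\rho_n$, whose mixture $\bar{\mathbb P}=\frac1N\sum_j\Pr_{f_j}$ satisfies $\|\bar{\mathbb P}-\Pr_{f_0}\|_{\mathrm{TV}}\le\alpha$; it is exactly here that the smallness of $\upsilon$ and the rate $n^{-t/(2t+1-1/p)}$ enter. Since in this regime $\rho_n\gg n^{-s/(2s+1)}$, for $n$ large we have $\rho_n>2L n^{-s/(2s+1)}$, so on $E_0$ every $f_j$ lies outside $C_n$. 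Transferring $E_0$ through the total-variation bound yields $\bar{\mathbb P}(f_{\mathrm{active}}\notin C_n)\ge 1-3\alpha$, whence $\max_j\Pr_{f_j}(f_j\notin C_n)\ge 1-3\alpha>\alpha$ for $\alpha$ small, contradicting honesty of $C_n$ on $\mathcal P_n$.

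The main obstacle is this last step, and within it the construction of the indistinguishable family $(f_j)$: one must exhibit rough functions that are $L_p$-separated from $\Sigma(s,B)$ by the sharp amount $n^{-t/(2t+1-1/p)}$ while remaining statistically indistinguishable from smooth functions. This is precisely the delicate infinite-dimensional composite testing problem that is the technical heart of Theorem~\ref{th:test}, and showing that the achievable separation depends on $1/p$ rather than on whether $p$ is an even integer is what must be established there; once that construction is available, the packaging above is routine. One should also double-check the two places where the regime hypothesis $s(1-1/p)>t$ is used — the rate identification in the first paragraph and the inequality $\rho_n>2L n^{-s/(2s+1)}$ in the last — since both rely on $n^{-t/(2t+1-1/p)}$ dominating $n^{-s/(2s+1)}$.
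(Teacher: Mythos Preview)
Your proposal is correct and follows essentially the same route as the paper. For the upper bound, the paper also centers a ball at the adaptive estimator with radius switched by the test $\Psi_n$ of Theorem~\ref{th:test}, and controls coverage via Markov together with the test's type~I/II errors; for the lower bound, the paper likewise uses the hard instances from the proof of Theorem~\ref{th:test} (with $f_0=0$), builds the test $\Psi=\mathbf 1\{0\notin C_n\text{ or }|C_n|>Ln^{-s/(2s+1)}\}$ from a hypothetical confidence set, and derives a contradiction with the testing lower bound --- your total-variation transfer of the event $E_0$ is the same argument phrased dually.
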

The proof of this Theorem is in Section~\ref{s:sbigcbcb} (upper bound) and in Section~\ref{th:lbcs} (lower bound), and it is a direct consequence of Theorem~\ref{th:test}. The upper and lower bound match the results in the paper~\citep{bull2011adaptive} for $p=2$ (Theorem~\ref{th:bull}). It is also remarkable that for general $p$, the lower and upper bound also match the lower bound in Equation~\eqref{hhjh} of the simpler testing problem.

\subsection{Discussion}

\textbf{Adaptation to the smoothness in $\mathcal P_n$.} On some specific model $\mathcal P_n \subset \Sigma(t,B)$ (with $\mathcal P_n = \Sigma(t,B)$ if $s(1-1/p) \leq t$), we have created honest and adaptive confidence sets given $B$. In this paper, we considered adaptation to the exponent $r$ of the Besov spaces $B_{r,p,\infty}$, which are the natural classes to adapt to since adaptive estimation is on these spaces. We did not consider adaptation to the radius $B$ of the Besov ball. If $t < (1-1/p)s$, the model $\mathcal P_n$ on which we adapt is strictly smaller than $\Sigma(t,B)$. We however state that $\mathcal P_n$ could not have been considered significantly larger.

\medskip

\begin{figure}[htbp]
\begin{center}
\includegraphics[width = 10.5cm,height=7cm]{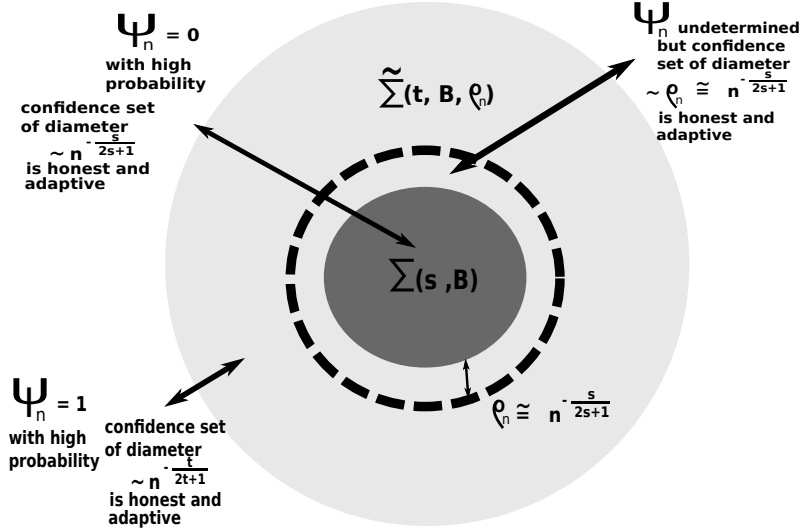}
\end{center}
\caption{Illustration of the proof of Theorem~\ref{th:ssmallseg1}.}\label{fig:1}
\end{figure}

\noindent
\textbf{Remark on the case $s(1-1/p) \leq t$.} There is a strong relation between the testing problem~\eqref{eq:testhaha}, and the problem of creating adaptive and honest confidence sets. It is remarkable however that in the case $s(1-1/p) \leq t$, although an uniformly consistent test exists only on a constrained model $\mathcal P_n = \Sigma(s,B) \bigcup \tilde \Sigma(t,B, \rho_n)$ (with $\rho_n = 2C(B+1) n^{-\frac{s}{2s+1}}$), honest and adaptive confidence sets exist on $\Sigma(t,B)$ itself (Theorem~\ref{th:ssmallseg1}). The proof of this theorem is actually very enlightening for understanding what is happening. Its nice feature is that it emphasises the connection between the testing problem~\eqref{eq:testhaha}, and the problem of building adaptive and honest confidence sets, also in the case $s(1-1/p) \leq t$ (unlike the proof in the paper~\citep{bull2011adaptive} for $p=2$ and $s \leq 2t$).  First, on $\mathcal P_n$, the existence of adaptive and honest confidence sets is a consequence of Theorem~\ref{th:test} (the proof of this fact is similar to the proof of Theorem~\ref{th:sbig}). Indeed, set for $\alpha>0$
\begin{align*}
C_n =\Big\{ f \in \Sigma(t,B): \|f - \tilde f_n\|_p \leq \frac{D}{\alpha} n^{-\frac{s}{2s+1}} (1 - \Psi_n) + \frac{D}{\alpha} n^{-\frac{t}{2t+1}} \Psi_n \Big\}
\end{align*}
where $\tilde f_n$ is the adaptive estimate constructed for Theorem~\ref{th:inferenceersatz} (which is the same as the adaptive estimate for Theorem~\ref{th:inference}), $\Psi_n$ is the test from Theorem~\ref{th:test} with level $\alpha$, and $D$ is a large enough constant depending only on $C,B$. This confidence set $C_n$ will be $\alpha-$honest and adaptive on $\mathcal P_n$ for $\{s,t\}$ given $B$, since the test $\Psi_n$ is accurate with probability at least $1-\alpha$ for any functions in $\mathcal P_n$ (since $\rho_n = 2C(B+1) n^{-\frac{s}{2s+1}}$). Second, the functions of $\Sigma(t,B) \setminus \mathcal P_n$ are at a distance smaller than $\rho_n = 2C(B+1) n^{-\frac{s}{2s+1}}$ from functions in $\Sigma(s,B)$. Theorem~\ref{th:inferenceersatz} applies to these functions, and the adaptive estimate $\tilde f_n$ is such that
\begin{equation*}
\sup_{f \in \Sigma(t,B) \setminus \mathcal P_n} \mathbb E_f \|\tilde f_n - f\|_2 \leq E n^{-\frac{s}{2s+1}},
\end{equation*}
for $E>0$ large enough and depending only on $C,B$. For this reason, mis-classifying a function $f \in \Sigma(t,B) \setminus \mathcal P_n$ into $\Sigma(s,B)$ is not problematic for confidence sets: indeed the previous equation implies that $C_n$ contains such an $f$ with probability larger than $1-\alpha$ provided that $D>E$, even though $f \not \in \Sigma(s,B)$. We illustrate the idea of the proof in Figure~\ref{fig:1}.

\noindent
\textbf{Confidence sets for a general segment $[t,s]$.} As in the paper~\citep{bull2011adaptive}, it is possible to extend Theorem~\ref{th:ssmallseg1} to the case $I = [t,s]$ (see Theorem~\ref{th:ssmallseg} in the supplementary material, Section~\ref{sec:adaptiv2}) on $\mathcal P_n = \Sigma(t,B)$, provided that $s(1-1/p) \leq t$. One can then combine the results in Theorems~\ref{th:sbig} and~\ref{th:ssmallseg} (eaxactly in the same way as in the paper~\citep{bull2011adaptive}, Theorem 5) to construct honest and adaptive confidence sets over any segment $[t,s]$, with $1/2 \leq t <s$, and on a maximal model $\mathcal P_n$. We refer the reader to Theorem 5 and it's proof in the paper~\citep{bull2011adaptive}, as the construction and proof for this fact in $L_p$ with $p \geq 2$ (and $p <\infty$) is exactly the same as what is done in this paper for $L_2$, by just combining Theorems~\ref{th:sbig} and~\ref{th:ssmallseg1}.



\medskip

\noindent
\textbf{Extension to other settings.} In the construction of confidence bands that we propose in Subsection~\ref{ss:stats2bis}, we first construct a test for the testing problem in Equation~\eqref{eq:testhaha}. In order to do that, we estimate the quantities $|a_{l,k}|^p$. The estimates we propose have good properties because the data is generated by an homocedastic Gaussian process. 
The main obstacle in more general settings is that one does not know the distribution of the noise (and in particular its $p$ first moments). Indeed, in the computation of the quantities $\hat F_p^p(l,k)$, we plug the $p$ first moments of a Gaussian distribution in order to correct the bias of $|\hat a_{l,k}|^p$ toward $|a_{l,k}|^p$. If the distribution of the noise is not Gaussian, the bias is not going to be corrected by these (Gaussian) moments, and we would want to replace them with the moments of the noise, or rather by estimates of the moments computed on the empirical residuals. A more detailed discussion can be found in the supplementary material, Appendix~\ref{app:discnongauss}.

\section{Proof of Theorem~\ref{th:test} (upper bound)}\label{sec:adaptiv1}

The method that we propose for the construction of the test in Theorem~\ref{th:test} for the testing problem~\eqref{eq:testhaha} is quite different from what was developed in~\citep{hoffmann2011adaptive, bull2011adaptive}. The main idea is to prove that for any $f\in \Sigma(t,B)$, the quantity $\|f - \Sigma(s,B)\|_p$ is close to the quantity 
\begin{equation*}
\|\Pi_{V_{j_s}}(f) - \Sigma(s,B)\|_p + \sum_{l=j_s+1}^j\|\Pi_{W_l}f\|_p.
\end{equation*}
This finding is actually very useful in practice since it allows to eliminate the empirical minimisation over $\Sigma(s,B)$ for wavelet coefficients of high resolution (which are the difficult ones to estimate), and it is a way around the technical difficulties encountered when performing the infimum test (see e.g.~papers~\citep{hoffmann2011adaptive, bull2011adaptive}). Then, one needs to estimate carefully  $\|\Pi_{V_{j_s}}(f) - \Sigma(s,B)\|_p$ and the terms $\|\Pi_{W_l}f\|_p$. The first term is easy to control using Borell's inequality. The second terms are, for each $l$, approximated by a rescaled sum of proper Taylor expansions of the terms $|\hat a_{l,k}|^p$ (i.e.~the quantities $\hat F_{p}^p(l,k)$, defined in next subsection). The variance of these estimates $\hat F_{p}^p(l,k)$ is not too difficult to bound in a proper way, since their variance is of same order than the variance of $|\hat a_{l,k}|^p$, which is bounded as $\tilde C^{(p)} \Big(n^{-p} + |a_{l,k}|^{2(p-1)}n^{-1} \Big)$. The critical quantity is the mean of these terms. When $p$ is even, the idea behind the construction of $\hat F_{p}^p(l,k)$ follows from the fact that
\begin{align*}
\mathbb E|\hat a_{l,k}|^p &= \mathbb E(a_{l,k} + \hat a_{l,k} - a_{l,k})^p\\
&=\sum_{u=0, u\hspace{2mm} even}^p \mathbf C_p^u a_{l,k}^u \frac{\E_{ G \sim \mathcal N(0,1)} \big| G  \big|^{p-u}}{n^{(p-u)/2}}.
\end{align*}
where $\mathbf C_p^u = \frac{p(p-1)...(p-u+1)}{1...u}$ is the usual binomial coefficient. $\hat F_{p}^p(l,k)$ is then $|\hat a_{l,k}|^p$ minus an unbiased estimate (constructed by induction) of the sum in last equation up to $u = p-2$. One can prove that 
\begin{equation*}
\mathbb E \hat F_{p}^p(l,k) = |a_{l,k}|^p.
\end{equation*}
Otherwise if $p$ is not an even integer, but any positive real number larger than or equal to $2$, the expectation of these Taylor expansions $\hat F_{p}^p(l,k)$ is such that
\begin{equation*}
D^{(m)} |a_{l,k}|^p \leq \mathbb E \hat F_{p}^p(l,k) \leq D^{(M)} |a_{l,k}|^{\lfloor p\rfloor}\Big( |a_{l,k}|^{p - \lfloor p\rfloor} + \frac{1}{n^{(p - \lfloor p\rfloor)/2}}\Big),
\end{equation*}
where $D^{(m)}$ and $D^{(M)}$ are two strictly positive constants. Since under $H_1$, only the lower bound on $\mathbb E \hat F_{p}^p(l,k)$ matters, and under $H_0$, only the upper bound on $\mathbb E \hat F_{p}^p(l,k)$ matters (and under $H_0$, the sum of the $|a_{l,k}|^{\lfloor p\rfloor}$ is small enough to neutralise the effect of the disturbing sum of the terms $|a_{l,k}|^{\lfloor p\rfloor}\frac{1}{n^{(p - \lfloor p\rfloor)/2}}$), we will have satisfying concentration results for the sums of $\hat F_{p}^p(l,k)$ (i.e.~$T_n(l)$). Controlling the mean and variance of these terms leads to large deviation results on the sums. This all enables us to construct an uniformly consistent test by considering if or if not these quantities (estimates of $\|\Pi_{V_{j_s}}(f) - \Sigma(s,B)\|_p$ and of the terms $\|\Pi_{W_l}f\|_p$) exceed given thresholds.

\subsection{Definition of a related testing problem}

Assume that $p\geq 2$ (and $p <\infty$) and $s>t\geq 1/2$. 


\subsection{Definition of the test statistic}\label{ss:stats2bis}

Let $0<j_s\leq j$ be two integers such that 
\begin{equation*}
j_s = \lfloor \log(n^{\frac{1}{2s+1}}) \rfloor \hspace{5mm} and \hspace{5mm} j = \lfloor \log(n^{\frac{1}{2t+1 - 1/p}}) \rfloor.
\end{equation*}

For any $u>0$ we define the following quantities 
\begin{equation*}
m_{u}^u = \E_{ G \sim \mathcal N(0,1)} \Big| G  \Big|^u.
\end{equation*}
We also define by convention, for any $l\geq j_s,k\in Z_l$, the following estimate of $(a_{l,k})^0$
\begin{align*}
 \hat F_{0}^0(l,k) =& 1.
\end{align*}
We now define by induction for any $u\geq 2$ even, the following estimates of $(a_{l,k})^u$.
\begin{align*}
\hat F_{u}^u(l,k) = \hat a_{l,k}^u - \sum_{i=0, i \hspace{1mm} even}^{u-2} \mathbf C_u^i \big(\frac{m_{u-i}}{n^{1/2}}\big)^{(u-i)} \hat F_{i}^i(l,k),
\end{align*}
where $\mathbf C_u^i = \frac{u(u-1)...(u-i+1)}{1...i}$ is the usual binomial coefficient. We extend this definition for $|a_{l,k}|^p$ if $p$ non-even (and also non necessarily integer) by setting
\begin{align*}
\hat F_{p}^p(l,k) = |\hat a_{l,k}|^{p} - \sum_{u=0, u \hspace{1mm} even}^{\lfloor p \rfloor - 2} \mathbf  C_p^u \big(\frac{m_{p-u}}{n^{1/2}}\big)^{(p-u)}  \hat F_{u}^u(l,k),
\end{align*}
where we set also for non-integer $p$ that $\mathbf C_p^u = \frac{p(p-1)...(p-u+1)}{1...u}$ (and $\mathbf C_p^0 = 1$ by convention). 

Consider the test statistics, for any $j_s\leq l \leq j$
\begin{equation*}
T_n(l) = \sum_{k \in Z_l} 2^{lp(1/2-1/p)}\hat F_{p}^p(l,k),
\end{equation*}
and also
\begin{equation*}
\tilde T_n= \inf_{g \in \Sigma(s,B)}\|\Pi_{V_{j_s}} \hat f_n - g\|_{p}.
\end{equation*}


Consider positive constants $(t_n(l))_{j_s \leq l \leq j}$ and $\tilde t_n$. We consider the test:
\begin{equation*}
 \Psi_n  = 1 -  \ind{\tilde T_n \leq \tilde t_n} \prod_{j_s \leq l\leq j} \ind{T_n(l)\leq(t_n(l))^{p}} .
\end{equation*}
We set
\begin{align*}
t_n(l) &= E_1 \Big( 2^{-ls(p-1)/p}\big(\frac{2^{l( 1- 1/p)}}{n}\big)^{1/(2p)} +2^{-\lfloor p\rfloor l s/p} \big(\frac{2^l}{n}\big)^{(p-\lfloor p\rfloor)/(2p)}\\
&+ 2^{-ls} + \sqrt{\frac{2^{(p- 1)(j+l)/(2p)}}{n}}\Big),
\end{align*}
and
\begin{equation*}
\tilde t_n = E_2 \sqrt{\frac{ 2^{j_s}}{n}},
\end{equation*}
where $E_1$ and $E_2$ are some large enough constants that depend only on $p,B$ and the desired level of the test. Then the test is uniformly consistent with 
\begin{equation*}
\rho_n = 4((B+1)C'2^{-jt} + 2\sum_{l=j_s}^j t_n(l) + 2\tilde t_n),
\end{equation*}
where $C'$ is a large enough constant that depends only on $p$ and the desired level of the test.


\subsection{Decomposition of the problem}

We justify here the test that we proposed.

\begin{lemma}\label{lem:justiftest}
Let $0<j_s<j$ be two integers. Let $(\tau_l)_{j_s\leq l\leq j}$ be a sequence of positive numbers such that $\tau_l \geq 2 \times 2^{-ls}$. Assume that $\rho_n\geq 4 C_p (B+1)(2^{-jt} + \sum_{l=j_s}^j \tau_l) $ where $C_p\geq 1$ is some positive constant that depends on $p$ only. Then we have
\begin{itemize}
 \item  $f \in \Sigma(s,B) \Rightarrow \Bigg($ $\max_{j_s \leq l\leq j} \|\Pi_{W_{l}} (f)\|_{0,p, \infty} \leq (B+1)\tau_l/2$ AND $\|\Pi_{V_{j_s}}f - \Sigma(s,B)\|_{p} =  0 $ $\Bigg)$.
 \item  $f \in \tilde \Sigma(t,B, \rho_n) \Rightarrow \Bigg($ $\max_{j_s \leq l\leq j} \|\Pi_{W_{l}} (f)\|_{0,p, \infty} \geq (B+1)\tau_l$ OR $\|\Pi_{V_{j_s}}f - \Sigma(s,B)\|_{p} \geq  3\rho_n/8 $ $\Bigg)$.
\end{itemize}

\end{lemma}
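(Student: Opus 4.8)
The plan is to verify the two implications separately: the first is a direct reading of the Besov-ball constraint, and the second I would establish by contraposition.

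\emph{First implication.} Suppose $f \in \Sigma(s,B)$, so $\|f\|_{s,p,\infty}\leq B$. For a fixed level $l$ the only nonvanishing wavelet coefficients of $\Pi_{W_l}(f)$ are $(a_{l,k})_{k\in Z_l}$, so by definition of the Besov norm
\[
\|\Pi_{W_l}(f)\|_{0,p,\infty}=2^{l(1/2-1/p)}|(a_{l,k})_k|_{l_p}=2^{-ls}\,2^{l(s+1/2-1/p)}|(a_{l,k})_k|_{l_p}\leq B\,2^{-ls}.
\]
Since $\tau_l\geq 2\cdot 2^{-ls}$, this yields $\|\Pi_{W_l}(f)\|_{0,p,\infty}\leq B2^{-ls}\leq(B+1)2^{-ls}\leq(B+1)\tau_l/2$ for every $j_s\leq l\leq j$. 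For the second half I would observe that $\Pi_{V_{j_s}}f$ has the same scaling and low-frequency wavelet coefficients as $f$ and vanishing coefficients above level $j_s$; taking the supremum over a smaller range of levels can only decrease the Besov norm, so $\|\Pi_{V_{j_s}}f\|_{s,p,\infty}\leq\|f\|_{s,p,\infty}\leq B$. Hence $\Pi_{V_{j_s}}f\in\Sigma(s,B)$ and $\|\Pi_{V_{j_s}}f-\Sigma(s,B)\|_p=0$.

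\emph{Second implication.} I would argue the contrapositive. Assume $f\in\Sigma(t,B)$ together with the negations of both conclusions, i.e.\ $\|\Pi_{W_l}(f)\|_{0,p,\infty}<(B+1)\tau_l$ for all $j_s\leq l\leq j$ and $\|\Pi_{V_{j_s}}f-\Sigma(s,B)\|_p<3\rho_n/8$, and show $\|f-\Sigma(s,B)\|_p<\rho_n$, so that $f\notin\tilde\Sigma(t,B,\rho_n)$. Choosing $g_0\in\Sigma(s,B)$ with $\|\Pi_{V_{j_s}}f-g_0\|_p<3\rho_n/8$, I decompose
\[
f-g_0=(\Pi_{V_{j_s}}f-g_0)+\sum_{l=j_s+1}^{j}\Pi_{W_l}(f)+\sum_{l>j}\Pi_{W_l}(f),
\]
and apply the triangle inequality in $L_p$. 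The crucial tool is the single-scale equivalence between the $L_p$ function norm and the sequence norm: for any one-level function $g=\sum_k b_k\psi_{l,k}$ one has $\|g\|_p\leq C_p\|g\|_{0,p,\infty}$ with $C_p$ depending only on $p$ and the basis, which is exactly where Assumption~\ref{ass:wav1} and the compact, well-spread support of the wavelets enter. This bounds the middle band by $C_p(B+1)\sum_{l=j_s}^{j}\tau_l$ via the hypothesis, and bounds the tail using $f\in\Sigma(t,B)$ (so $\|\Pi_{W_l}(f)\|_{0,p,\infty}\leq B2^{-lt}$) and summing the geometric series over $l>j$, giving at most $C_p(B+1)2^{-jt}$ after absorbing the factor $1/(2^t-1)\leq 1/(\sqrt2-1)$, finite since $t\geq 1/2$, into $C_p$.

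\emph{Combining.} With $C_p$ taken as the maximum of the equivalence and geometric constants, the hypothesis $\rho_n\geq 4C_p(B+1)(2^{-jt}+\sum_{l=j_s}^{j}\tau_l)$ forces the two sums together to contribute at most $\rho_n/4$, whence
\[
\|f-\Sigma(s,B)\|_p\leq\|f-g_0\|_p<\frac{3\rho_n}{8}+\frac{\rho_n}{4}=\frac{5\rho_n}{8}<\rho_n,
\]
the desired contradiction.

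\emph{Main obstacle.} The only non-bookkeeping ingredient is the single-scale $L_p$--$\ell_p$ norm equivalence, which is what legitimately turns the sequence-space quantities $\|\Pi_{W_l}(f)\|_{0,p,\infty}$ (the objects controlled by the test) into genuine $L_p$ distances; everything else is the triangle inequality and a geometric sum. The remaining care is purely in tracking constants, so that $3/8+1/4<1$ leaves slack and so that a single $C_p\geq 1$ depending only on $p$ simultaneously serves the equivalence bound and the tail summation.
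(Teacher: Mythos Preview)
Your proof is correct and follows essentially the same route as the paper. The first implication is handled identically; for the second, the paper argues directly (starting from $\|f-\Sigma(s,B)\|_p\geq\rho_n$ and deducing that one of the two quantities must be large) while you argue the contrapositive, but the underlying decomposition, the Besov tail bound for $l>j$, and the single-scale $\|\cdot\|_p\leq C_p\|\cdot\|_{0,p,\infty}$ equivalence (which the paper packages as the embedding $\|\cdot\|_p\leq C_p\|\cdot\|_{0,p,1}$ applied to the whole middle band at once) are the same ingredients with the same arithmetic.
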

\begin{proof}
\emph{Under the null Hypothesis $H_0$}

Assume that $f \in \Sigma(s,B)$. Then $\Pi_{V_{j_s}}f \in \Sigma(s,B)$, and
\begin{align*}
\|\Pi_{V_{j_s}}f - \Sigma(s,B)\|_{p} =  0.
\end{align*}

If $f$ is in $\Sigma(s,B)$ then by definition of the Besov spaces
\begin{equation*}
\|\Pi_{V_{j}} f\|_{s,p,\infty}\leq B,
\end{equation*}
which implies by definition of the $\|.\|_{0,p,\infty}$ norm that
\begin{equation*}
\sup_{j_s \leq l\leq j} \|\Pi_{W_l} f \|_{0,p,\infty}  - \frac{B }{2^{ls}} \leq 0,
\end{equation*}
which implies that
\begin{equation*}
\sup_{j_s \leq l\leq j} \|\Pi_{W_l} f \|_{0,p,\infty}   \leq B2^{-ls} \leq \tau_l/2,
\end{equation*}

\emph{Under the alternative Hypothesis $H_1$}

Assume that $f$ is in $\tilde \Sigma(t, B,  \rho_n)$. We have by triangular inequality
\begin{align*}
\inf_{g \in \Sigma(s,B)}\|f - g\|_{p} &\leq \inf_{g \in \Sigma(s,B)}\|\Pi_{V_{j_s}} (f) - g\|_{p} + \|\Pi_{V_j \setminus V_{j_s}} (f)\|_{p} + \|\Pi_{V^-} (f)\|_{p},
\end{align*}
where we set $V^- = \mathrm{span}\big(\bigcup_{l=j+1}^{\infty}W_l\big)$. By definition of $\tilde \Sigma(t, B, \rho_n)$ and definition of the Besov spaces, we know that $\|\Pi_{V^-} f\|_{p} \leq C_pB 2^{-jt}$ (see Proposition~\ref{prop:approx}, Supplementary Material). We thus have
\begin{align*}
 \inf_{g \in \Sigma(s,B)} \|f - g\|_{p} &\leq  \inf_{g \in \Sigma(s,B)}\|\Pi_{V_{j_s}} (f) - g\|_{p} + \|\Pi_{V_j \setminus V_{j_s}} (f)\|_{p}  + C_pB2^{-jt}.
\end{align*}
We thus have (since $\rho_n \leq   \inf_{g \in \Sigma(s,B)}\|f - g\|_{p}$)
\begin{align*}
 &3\rho_n/4 \leq \rho_n - C_pB2^{-jt}  \leq  \inf_{g \in \Sigma(s,B)}\|\Pi_{V_{j_s}} (f) - g\|_{p} + \|\Pi_{V_j \setminus V_{j_s}} (f)\|_{p}.
\end{align*}
This implies that $ \inf_{g \in \Sigma(s,B)} \|\Pi_{V_{j_s}} (f) - g\|_{p} \geq 3\rho_n/8$, or $ \|\Pi_{V_j \setminus V_{j_s}} (f)\|_{p} \geq 3\rho_n/8$.

Note now that by imbrication of the Besov spaces (see Proposition~\ref{prop:embeed}) there exists a constant $C_p \geq 1$ that depends on $p$ only such that
\begin{align*}
\|\Pi_{V_{j}-V_{j_s}} (f)\|_{p}  &\leq C_p\|\Pi_{V_{j}-V_{j_s}}  (f) \|_{0,p,1}\\
&= C_p\sum_{l=j_s}^{j} \|\Pi_{W_{l}} (f)\|_{0,p, \infty}.
\end{align*}

Since $3/8 \rho_n \geq  C_p B \sum_{l=j_s}^j \tau_l$, the previous equation implies that if $\|\Pi_{V_j \setminus V_{j_s}} (f)\|_{p} \geq 3\rho_n/8$, then there exists $j_s\leq l\leq j$ such that $\|\Pi_{W_{l}} (f)\|_{0,p, \infty} \geq B\tau_l$. This concludes the proof.

\end{proof}

\subsection{Large deviations for $\|\Pi_{V_{j_s}}(\hat f_n - f)\|_{p}$}

Similarly to Lemma~\ref{lem:conclp} (Supplementary Material), we have the following Lemma.

\begin{lemma}\label{lem:conc0pp}
We have
\begin{align*}
\sup_{f \in L_p} \Pr \Big\{  \|\Pi_{V_{j_s}}(\hat f_n - f)\|_{p}   \geq (D_p + 2 C_{p/(p-1)}\sqrt{\log(1/\delta)})\sqrt{\frac{ 2^{j_s}}{n}}  \Big\} \leq \delta,
\end{align*}
where $C_{p/(p-1)}$ and $D_p$ are positive constants that depend only on $p$.
\end{lemma}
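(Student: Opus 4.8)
The plan is to exploit the fact that the random function $W := \Pi_{V_{j_s}}(\hat f_n - f)$ does not actually depend on $f$, and then to treat $\|W\|_p$ as the supremum of a Gaussian process, to which Borell's concentration inequality applies. Indeed, the wavelet coefficients of $\hat f_n - f$ are exactly the pure-noise terms $\hat a_{l,k} - a_{l,k} = n^{-1/2}\int_0^1 \psi_{l,k}\,dB_x$ and $\hat a_k' - a_k' = n^{-1/2}\int_0^1 \phi_k\,dB_x$, which by orthonormality of the basis form a family of independent $\mathcal N(0,1/n)$ variables whose joint law is independent of $f$. Hence $W = n^{-1/2}\big(\sum_{k\in Z_0}\xi_k\phi_k + \sum_{0<l\le j_s,k\in Z_l}\xi_{l,k}\psi_{l,k}\big)$ with i.i.d.\ standard Gaussian weights, so the supremum over $f\in L_p$ collapses to a single probability. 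Writing $1/p+1/q=1$, I would represent $\|W\|_p=\sup_{\|g\|_q\le1}\langle W,g\rangle$, which is a supremum of the centered Gaussian process $g\mapsto\langle W,g\rangle$ indexed by the unit ball of $L_q$.

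Borell's inequality then gives, for every $u>0$, the bound $\Pr(\|W\|_p\ge\mathbb E\|W\|_p+u)\le\exp(-u^2/(2\sigma^2))$, where $\sigma^2=\sup_{\|g\|_q\le1}\mathrm{Var}(\langle W,g\rangle)$ is the weak variance. So the proof reduces to two deterministic estimates, a bound on the mean $\mathbb E\|W\|_p$ and a bound on $\sigma^2$, both of order $2^{j_s}/n$.

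For the mean I would use Fubini and the pointwise Gaussianity of $W(x)$: since $W(x)$ is centered Gaussian with variance $n^{-1}v(x)$, where $v(x)=\sum_{k\in Z_0}\phi_k^2(x)+\sum_{0<l\le j_s,k}\psi_{l,k}^2(x)\le C_p^2 2^{j_s}$ by Assumption~\ref{ass:wav1}, one gets $\mathbb E\|W\|_p^p=\int_0^1\mathbb E|W(x)|^p\,dx=m_p^p n^{-p/2}\int_0^1 v(x)^{p/2}dx\le m_p^p C_p^p (2^{j_s}/n)^{p/2}$, and Jensen yields $\mathbb E\|W\|_p\le m_pC_p\sqrt{2^{j_s}/n}=:D_p\sqrt{2^{j_s}/n}$. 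For the weak variance, independence of the noise coefficients gives $\mathrm{Var}(\langle W,g\rangle)=n^{-1}\|\Pi_{V_{j_s}}g\|_2^2$, so I must bound $\|\Pi_{V_{j_s}}g\|_2$ for $\|g\|_q\le1$. Setting $h=\Pi_{V_{j_s}}g$ and using self-adjointness of the projection, Hölder gives $\|h\|_2^2=\langle h,g\rangle\le\|h\|_p\|g\|_q$; a Nikolskii (Bernstein) inequality for band-limited functions then closes the loop, namely Assumption~\ref{ass:wav1} gives $\|h\|_\infty\le C_p 2^{j_s/2}\|h\|_2$ by Cauchy--Schwarz on the coefficients, whence $\|h\|_p\le\|h\|_\infty^{(p-2)/p}\|h\|_2^{2/p}\le C_p^{(p-2)/p}2^{j_s(1/2-1/p)}\|h\|_2$. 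Combining, $\|h\|_2\le C_p^{(p-2)/p}2^{j_s(1/2-1/p)}\|g\|_q$, so that $\sigma^2\le C_{p/(p-1)}^2\,2^{j_s(1-2/p)}/n\le C_{p/(p-1)}^2\,2^{j_s}/n$ with $C_{p/(p-1)}=C_p^{(p-2)/p}$.

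To conclude, I would take $u=2C_{p/(p-1)}\sqrt{\log(1/\delta)}\sqrt{2^{j_s}/n}$, so that $u^2/(2\sigma^2)\ge2\log(1/\delta)$ and Borell's bound is $\le\delta^2\le\delta$; since $\mathbb E\|W\|_p+u\le(D_p+2C_{p/(p-1)}\sqrt{\log(1/\delta)})\sqrt{2^{j_s}/n}$, the event in the statement is contained in $\{\|W\|_p\ge\mathbb E\|W\|_p+u\}$ and the result follows. The only genuinely non-routine step is the weak-variance bound: it is essential to pass to the $L_q$-dual and invoke the Nikolskii inequality for $V_{j_s}$, rather than naively bounding $\|\Pi_{V_{j_s}}g\|_2$ by $\|g\|_2$, which fails for $q<2$. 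The generous factor $2$ in front of $C_{p/(p-1)}$ then painlessly absorbs the $\sqrt2$ coming from Borell's inequality.
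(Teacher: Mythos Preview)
Your proof is correct and follows the same high-level strategy as the paper: represent $\|\Pi_{V_{j_s}}(\hat f_n-f)\|_p$ as the supremum of a centered Gaussian process over the unit ball of $L_{p/(p-1)}$ and apply Borell's inequality, then control separately the mean and the weak variance. The paper states the lemma as an immediate consequence of its Propositions~\ref{prop:est} and~\ref{prop:Borell}.

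Where you differ is in how the two subsidiary bounds are obtained. For the mean, the paper uses Rosenthal's inequality (Proposition~\ref{prop:est}), whereas you use the pointwise Gaussianity of $W(x)$ together with Assumption~\ref{ass:wav1} directly; your route is shorter and avoids the moment inequality. For the weak variance, the paper bounds $\sup_{\|g\|_{q}\le1}\|\Pi_{V_{j_s}}g\|_2^2$ by passing through the Besov embedding $L_{q}\subset B_{0,q,2}$ (Proposition~\ref{prop:embeed}) and computing the supremum over a Besov ball in sequence space; you instead derive a Nikolskii-type inequality $\|h\|_\infty\le C_p 2^{j_s/2}\|h\|_2$ for $h\in V_{j_s}$ straight from Assumption~\ref{ass:wav1} and close the loop via $\|h\|_2^2=\langle h,g\rangle\le\|h\|_p\|g\|_q$. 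Your argument is more self-contained, needing only Assumption~\ref{ass:wav1} and no external embedding results; the paper's route has the advantage that it simultaneously yields the analogous inequality for the Besov norms $\|\cdot\|_{0,p,h}$, which it uses elsewhere. Both give the same order $\sigma^2\lesssim 2^{j_s(1-2/p)}/n$, and the remaining slack in the factor $2$ handles the constants either way.
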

\begin{proof}
The Lemma follows directly from Propositions~\ref{prop:est} and~\ref{prop:Borell} (Supplementary Material).
\end{proof}

\subsection{Convergence tools for $T_n(l)$}

\begin{lemma}\label{lem:conc0pptopgen}
There are constants $C^{(p)}, D^{(m)}, D^{(M)}, D^{(D)}$ that depend on $p$ only such that for any $\Delta \in ]0,1[$ we have 
\begin{align}
&\Pr \Big\{\exists l: j_s \leq l \leq j, T_n(l)   \leq \|\Pi_{W_{l}}f\|_{0,p,\infty}^{p-1}\Big(D^{(m)}\|\Pi_{W_{l}}f\|_{0,p,\infty} - \sqrt{\frac{C^{(p)} D^{(D)}}{\Delta}} \sqrt{\frac{2^{l(1-1/p)}}{n}} \Big)\nonumber\\
&-  \sqrt{\frac{D^{(D)}C^{(p)}}{\Delta} \frac{2^{(p- 1)(j+l)/2}}{n^p}} \forall j_s\leq l \leq j \Big\} \leq \Delta.\label{eq:larg3gen} 
\end{align}
Also if $f \in \Sigma(s)$ then
\begin{align}
&\Pr \Big\{\exists l: j_s \leq l \leq j, T_n(l)  \geq (B^p+1)(D^{(M)} + \sqrt{\frac{C^{(p)} D^{(D)}}{\Delta}})(\tau_{p,l}(s))^p   \nonumber \\
&+  \sqrt{\frac{C^{(p)} D^{(D)}}{\Delta}} \sqrt{\frac{2^{(p- 1)(j+l)/2}}{n^p}} \forall j_s\leq l \leq j \Big\} \leq \Delta,\label{eq:larg4gen}
\end{align}
where $(\tau_{p,l}(s))^p = 2^{-ls(p-1)}\sqrt{\frac{2^{l( 1- 1/p)}}{n}} +2^{-\lfloor p\rfloor l s} \big(\frac{2^l}{n}\big)^{(p-\lfloor p\rfloor)/2} + 2^{-pls}$.

\end{lemma}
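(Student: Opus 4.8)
The statement is a pair of large-deviation bounds for the statistics $T_n(l) = \sum_{k \in Z_l} 2^{lp(1/2-1/p)} \hat F_p^p(l,k)$. These are sums over $k \in Z_l$ of the estimators $\hat F_p^p(l,k)$, and I would attack both inequalities by the classical recipe: control the mean $\mathbb E \hat F_p^p(l,k)$, control the variance of each summand, and then invoke a Bernstein/Bennett-type concentration inequality for the sum, finally applying a union bound over the $O(j - j_s)$ values of $l$ to absorb the factor into $\Delta$.

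**Mean and variance of the summands.** The first main step is to pin down $\mathbb E \hat F_p^p(l,k)$. The discussion preceding the lemma already supplies the key two-sided estimate
\begin{equation*}
D^{(m)} |a_{l,k}|^p \leq \mathbb E \hat F_p^p(l,k) \leq D^{(M)} |a_{l,k}|^{\lfloor p\rfloor}\Big(|a_{l,k}|^{p-\lfloor p\rfloor} + n^{-(p-\lfloor p\rfloor)/2}\Big),
\end{equation*}
which comes from the Taylor-expansion / bias-correction construction of $\hat F_p^p$. For the lower-tail bound~\eqref{eq:larg3gen} I would only use the lower bound on the mean, and for the upper-tail bound~\eqref{eq:larg4gen} only the upper bound on the mean, exactly as flagged in the heuristic discussion. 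The second ingredient is the variance bound on $|\hat a_{l,k}|^p$, stated earlier as being of order $\tilde C^{(p)}(n^{-p} + |a_{l,k}|^{2(p-1)} n^{-1})$; since $\hat F_p^p(l,k)$ differs from $|\hat a_{l,k}|^p$ only by lower-order corrected terms whose variances are of the same or smaller order, I would argue $\mathrm{Var}(\hat F_p^p(l,k)) \leq C^{(p)}(n^{-p} + |a_{l,k}|^{2(p-1)} n^{-1})$ with the constant $C^{(p)}$ matching the one in the statement. Summing over $k$ and re-weighting by $2^{lp(1/2-1/p)}$, using $|Z_l| \leq c 2^l$ and the definition of the $\|\Pi_{W_l}f\|_{0,p,\infty}$ norm, yields $\mathbb E T_n(l)$ sandwiched between the terms appearing in the two displays and a total variance of order $C^{(p)}\big(\|\Pi_{W_l}f\|_{0,p,\infty}^{2(p-1)} 2^{l(1-1/p)} n^{-1} + 2^{(p-1)(j+l)/2} n^{-p}\big)$. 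The appearance of $2^{(p-1)(j+l)/2}$ rather than $2^l$ traces back to the truncation level $j$ entering the definition of the weights, and must be tracked carefully.

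**Concentration and union bound.** With the mean and variance in hand, I would apply a tail inequality — Chebyshev suffices to get the $1/\sqrt{\Delta}$ dependence that appears in the statement, and indeed the factor $\sqrt{D^{(D)}C^{(p)}/\Delta}$ multiplying the standard-deviation terms is exactly the signature of a second-moment (Chebyshev) argument rather than an exponential one. For~\eqref{eq:larg3gen}, I center $T_n(l)$ at its mean, bound the mean below by $D^{(m)}\|\Pi_{W_l}f\|_{0,p,\infty}^p$, and subtract the square root of the variance scaled by $1/\sqrt{\Delta}$; the two variance contributions split precisely into the $\|\Pi_{W_l}f\|_{0,p,\infty}^{p-1}\sqrt{C^{(p)}D^{(D)}/\Delta}\sqrt{2^{l(1-1/p)}/n}$ term and the additive $\sqrt{(D^{(D)}C^{(p)}/\Delta) 2^{(p-1)(j+l)/2}/n^p}$ term. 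For~\eqref{eq:larg4gen}, under $f \in \Sigma(s)$ I bound the mean above using $|a_{l,k}| \leq B 2^{-ls}$-type decay, which produces $(\tau_{p,l}(s))^p$, and again add the Chebyshev fluctuation. The union over the at most $j - j_s + 1$ values of $l$ is handled by replacing $\Delta$ with $\Delta/(j-j_s+1)$ and absorbing the logarithmic factor into the constant $D^{(D)}$, or by a direct single application of Chebyshev to $\max_l$.

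**Main obstacle.** The genuinely delicate part is the variance transfer from $|\hat a_{l,k}|^p$ to $\hat F_p^p(l,k)$ when $p$ is not an even integer. Because $\hat F_p^p$ is built by an inductive bias correction involving the lower corrected moments $\hat F_u^u$, one must verify that none of these correction terms inflate the variance beyond the stated order $C^{(p)}(n^{-p} + |a_{l,k}|^{2(p-1)} n^{-1})$, and simultaneously that the two-sided mean bound holds uniformly in the regime where $|a_{l,k}|$ ranges from $0$ up to the Besov-controlled size. Keeping the $n^{-(p-\lfloor p\rfloor)/2}$ correction term under control — and showing that under $H_0$ the aggregated $|a_{l,k}|^{\lfloor p\rfloor} n^{-(p-\lfloor p\rfloor)/2}$ contribution is dominated by the threshold rather than overwhelming it — is exactly the technical heart flagged in the introduction as the reason the even/odd distinction of~\citep{lepski1999estimation} turns out to be irrelevant here. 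I would therefore isolate these mean and variance estimates for $\hat F_p^p(l,k)$ as separate propositions (plausibly the ones referenced as Propositions in the Supplementary Material) and treat the present lemma as the bookkeeping step that sums them and concentrates.
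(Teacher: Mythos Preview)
Your overall architecture---mean bound, variance bound, Chebyshev, union bound over $l$---is exactly the paper's. But two specific steps are wrong, and they are linked.

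\textbf{The variance does not contain $j$.} The statistic $T_n(l)$ depends only on the level-$l$ coefficients $\hat a_{l,k}$; nothing in its definition involves the cutoff $j$. The correct variance bound (the paper's Lemma on $\mathbb V[T_n(l)]$) is
\[
\mathbb V[T_n(l)] \;\leq\; C^{(p)}\Bigg(\frac{2^{l(p-1)}}{n^p} \;+\; \frac{2^{l(1-2/p)}}{n}\,\|\Pi_{W_l}f\|_{0,p,\infty}^{2(p-1)}\Bigg),
\]
with $2^{l(p-1)}$ (not $2^{(p-1)(j+l)/2}$) in the first term and $2^{l(1-2/p)}$ (not $2^{l(1-1/p)}$) in the second. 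Your claim that ``the truncation level $j$ enters the definition of the weights'' is simply false: the weights $2^{lp(1/2-1/p)}$ in $T_n(l)$ are functions of $l$ alone.

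\textbf{The $j$-dependence comes from a weighted union bound.} The factor $2^{(p-1)(j+l)/2}$ in the final statement arises not from the variance but from the choice of Chebyshev levels $\delta_l$. The paper takes
\[
\frac{1}{\delta_l} \;=\; \frac{D^{(D)}}{\Delta}\,\min\!\big(2^{(j-l)(p-1)/2},\,2^{l/p}\big),
\]
so that $\sum_{l=j_s}^j \delta_l \leq \Delta$ as a convergent geometric series with $D^{(D)}$ depending only on $p$. Multiplying $\sqrt{1/\delta_l}$ into the two variance pieces turns $2^{l(p-1)}/n^p$ into $2^{(p-1)(j+l)/2}/n^p$ and $2^{l(1-2/p)}/n$ into $2^{l(1-1/p)}/n$. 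Your proposed uniform split $\delta_l = \Delta/(j-j_s+1)$ would instead introduce a factor $\sqrt{j-j_s}\asymp\sqrt{\log n}$, which cannot be ``absorbed into the constant $D^{(D)}$'' since $D^{(D)}$ must depend on $p$ only. The alternative you mention---a single Chebyshev on $\max_l T_n(l)$---does not help either, since the variance of the max is not controlled.

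So your plan is right in outline, and your identification of the mean/variance control on $\hat F_p^p(l,k)$ as the technical heart is accurate; but you have back-solved the final exponents into the wrong step of the argument and, as a consequence, proposed a union-bound strategy that would lose a $\sqrt{\log n}$ factor relative to the stated lemma.
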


\begin{proof}

We remind that $\Pi_{W_{l}}\hat f_n = \sum_{k \in Z_l} \hat a_{l,k} \psi_{l,k}$ and that $\Pi_{W_{l}}f = \sum_{k \in Z_l} a_{l,k} \psi_{l,k}$.

\begin{lemma}\label{lem:mean}
There exists two strictly positive constants $D^{(m)}$ and $D^{(M)}$ that depend on $p$ only such that
\begin{align*}
D^{(M)}(\|\Pi_{W_{l}}f\|_{0,p,\infty}^{\lfloor p \rfloor} (\frac{2^{l}}{n})^{(p - \lfloor p \rfloor)/2} + \|\Pi_{W_{l}}f\|_{0,p,\infty}^p) \geq \E[T_n(l)] \geq  D^{(m)}  \|\Pi_{W_{l}}f\|_{0,p,\infty}^p.
\end{align*}
\end{lemma}

\begin{proof}
We provide bounds on $ \E \hat F_{p}^p(l,k)$, and this implies bounds on $T_n(l)$ by definition of $T_n(l)$.

\emph{Step 1: Expression of $ \E \hat F_{u}^u(l,k)$ for $u$ even.} Let $u>0$ be an even integer. We first prove by induction that $ \E \hat F_{u}^u(l,k) = a_{l,k}^u$.

For $u=2$ we have
\begin{align*}
 \E (\hat a_{l,k})^2 &= \E (a_{l,k} + \hat a_{l,k} - a_{l,k})^2\\
&=  a_{l,k}^2 + 2 a_{l,k} \E (\hat a_{l,k} - a_{l,k}) + \E (\hat a_{l,k} - a_{l,k})^{2}\\
&= a_{l,k}^2 + \frac{1}{n} = a_{l,k}^2 + \frac{m_{2}^2}{n},
\end{align*}
since $\hat a_{l,k} - a_{l,k} \sim \mathcal N(0, 1/n)$.This implies that $\E \hat F_{2}^2(l,k) = a_{l,k}^2$ by definition of $ \hat F_{2}^2(l,k)$. 

The induction assumption is as follows: we assume that for any $i$ even such that $2\leq i \leq u-2$, we have $\E \hat F_{i}^i(l,k) = a_{l,k}^i$. Since $u$ even, we have by a binomial expansion
\begin{align*}
 \E |\hat a_{l,k}|^u &= \E (a_{l,k} + \hat a_{l,k} - a_{l,k})^u\\
&= \sum_{i=0}^u \mathbf C_u^i a_{l,k}^i \E (\hat a_{l,k} - a_{l,k})^{u-i}\\
&= \sum_{i=0, i \hspace{1mm} even}^u \mathbf C_u^i a_{l,k}^i \frac{m_{u-i}^{u-i}}{n^{(u-i)/2}},
\end{align*}
since $\hat a_{l,k} - a_{l,k} \sim \mathcal N(0, 1/n)$ (and thus for $i$ odd, $\E (\hat a_{l,k} - a_{l,k})^{u-i} =0$). This implies by the induction assumption
\begin{align*}
 \E \hat F_{u}^u(l,k) &=  \E \hat a_{l,k}^u - \sum_{i=0, i \hspace{1mm} even}^{u-2} \mathbf C_u^i \E \hat F_{i}^i(l,k) \big(\frac{m_{u-i}}{n^{1/2}}\big)^{(u-i)}\\
&= \sum_{i=0, i \hspace{1mm} even}^u  \mathbf C_u^i   a_{l,k}^i \frac{m_{u-i}^{u-i}}{n^{(u-i)/2}} - \sum_{i=0, i \hspace{1mm} even}^{u-2} \mathbf C_u^i a_{l,k}^i \big(\frac{m_{u-i}}{n^{1/2}}\big)^{(u-i)}\\
&= \sum_{i=0, i \hspace{1mm} even}^u  \mathbf C_u^i a_{l,k}^i \big(\frac{m_{u-i}}{n^{1/2}}\big)^{(u-i)} -  \sum_{i=0, i \hspace{1mm} even}^{u-2} \mathbf C_u^i a_{l,k}^i \big(\frac{m_{u-i}}{n^{1/2}}\big)^{(u-i)}\\
&=  a_{l,k}^u.
\end{align*}
This concludes the induction.

Since for $p$ even it holds that $ \E \hat F_p^p(l,k) =  a_{l,k}^p$, it also holds in particular by definition of $T_n(l)$ and of $\|\Pi_{W_{l}}f\|_{0,p,\infty}^p$ that
\begin{equation*}
\E T_n(l) = \|\Pi_{W_{l}}f\|_{0,p,\infty}^p.
\end{equation*}

\emph{Step 2: Lower bound on $ \E \hat F_{p}^p(l,k)$ for $p$ non-even.} Consider now the case $p$ non-even.

We have since $\hat a_{l,k} - a_{l,k} \sim \mathcal N(0, 1/n)$, and since, if $a$ and $b$ in $\mathbb R$ are such that $ab>0$, then $|a+b|> \max(|a|,|b|)$,
\begin{align*}
 \E \Big|\hat a_{l,k}\Big|^p &= \E \Big|a_{l,k} + \hat a_{l,k} - a_{l,k}\Big|^p\\
&\geq |a_{l,k}|^p \Pr\big( a_{l,k} (\hat a_{l,k} - a_{l,k})\geq 0 \big) \\
&= |a_{l,k}|^p {\Pr}_{G \sim  \mathcal N(0,1)}\big(G \geq 0 \big) = |a_{l,k}|^p/2,
\end{align*}
which implies by definition of  $F_p^p(l,k)$ and also since $ \E \hat F_{u}^u(l,k) = a_{l,k}^u$ for $u$ even 
\begin{align}\label{eq:sec}
 \E \hat F_p^p(l,k) &\geq  |a_{l,k}|^p/2 - \sum_{u=0, u \hspace{1mm} even}^{\lfloor p \rfloor -2} \mathbf C_p^u a_{l,k}^u \big(\frac{m_{p-u}}{n^{1/2}}\big)^{p-u} \nonumber\\
&\geq  |a_{l,k}|^p/2 - C'\Bigg(\big(\frac{m_{p}}{n^{1/2}}\big)^{p} + a_{l,k}^{\lfloor p \rfloor} \big(\frac{m_{p-\lfloor p \rfloor}}{n^{1/2}}\big)^{(p-\lfloor p \rfloor)}\Bigg),
\end{align}
where $C'$ is some constant that depends on $p$ only.

Also, we have by a Taylor expansion since the function $h:m \rightarrow \E_{G \sim N(0,1)}|m+G|^p$ is in $C^{\infty}([0,1])$
\begin{align*}
 \E |\hat a_{l,k}|^p &= \E |a_{l,k} + \hat a_{l,k} - a_{l,k}|^p = \E_{G \sim \mathcal N(0,1)} |a_{l,k} + \frac{G}{\sqrt{n}}|^p\\
&= \sum_{u=0, u \hspace{1mm} even}^{\lfloor p \rfloor -2} \mathbf C_p^u a_{l,k}^u \big(\frac{m_{p-u}}{n^{1/2}}\big)^{p-u}+ \mathbf C_p^{\lfloor p \rfloor} a_{l,k}^{\lfloor p \rfloor} \E_{G \sim \mathcal N(0,1)} \Big| y +\frac{G}{\sqrt{n}}\Big|^{p-\lfloor p \rfloor},
\end{align*}
since $\hat a_{l,k} - a_{l,k} \sim \mathcal N(0, 1/n)$, and where $|y| \leq |a_{l,k}|$.

This implies, together with the fact that $ \E \hat F_{u}^u(l,k) = a_{l,k}^u$ for $u$ even, that
\begin{align}
 \E \hat F_p^p(l,k) &= \mathbf C_p^{\lfloor p \rfloor} a_{l,k}^{\lfloor p \rfloor} \E_{G \sim N(0,1)} \Big|y + \frac{G}{\sqrt{n}}\Big|^{p-\lfloor p \rfloor}\label{eq:thi}\\
&\geq \mathbf C_p^{\lfloor p \rfloor} a_{l,k}^{\lfloor p \rfloor} \big(\frac{m_{p-\lfloor p \rfloor}}{n^{1/2}}\big)^{p-\lfloor p \rfloor}.\label{eq:fi}
\end{align}

By considering the bound in Equation~\eqref{eq:fi} for $|a_{l,k}| \leq C''n^{-1/2}$ and the bound in Equation~\eqref{eq:sec} for $|a_{l,k}| \geq C''n^{-1/2}$, for some $C''$ that depends on $p$ only (through $m_p$, $m_{p-\lfloor p \rfloor}$ and $C'$), we obtain that there exists a constant $D^{(m)}>0$ that depends on $p$ only and such that
\begin{align*}
 \E \hat F_p^p(l,k) &\geq  D^{(m)}\max\Big(|a_{l,k}|^p,a_{l,k}^{\lfloor p \rfloor} n^{-(p-\lfloor p \rfloor)/2}\Big) \geq  D^{(m)}|a_{l,k}|^p.
\end{align*}
This leads to the lower bound in Lemma~\ref{lem:mean} by definition of $\|\Pi_{W_{l}}f\|_{0,p,\infty}^p$ and $T_n(l)$.

\emph{Step 3: Upper bound on $ \E \hat F_{p}^p(l,k)$ for $p$ non-even.}  By Equation~\eqref{eq:thi} there exists $y$ such that $|y|\leq |a_{l,k}|$ and such that
\begin{align}
 \E \hat F_p^p(l,k) &= \mathbf C_p^{\lfloor p \rfloor} a_{l,k}^{\lfloor p \rfloor} \E_{G \sim N(0,1)} \Big|y + \frac{G}{\sqrt{n}}\Big|^{p-\lfloor p \rfloor} \nonumber\\
&\leq \mathbf C_p^{\lfloor p \rfloor} a_{l,k}^{\lfloor p \rfloor}  \Bigg(4|y|^{p-\lfloor p \rfloor} + 4\E_{G \sim \mathcal N(0,1)} \Big|\frac{G}{\sqrt{n}} \Big|^{p-\lfloor p \rfloor}\Bigg) \nonumber\\
&\leq \mathbf C_p^{\lfloor p \rfloor} a_{l,k}^{\lfloor p \rfloor} \Bigg(4|y|^{p-\lfloor p \rfloor} + 4 m_{p-\lfloor p \rfloor}^{p-\lfloor p \rfloor} \Big(\frac{1}{\sqrt{n}}\Big)^{p-\lfloor p \rfloor} \Bigg) \nonumber\\
&\leq \mathbf C_p^{\lfloor p \rfloor}  \Bigg(4 |a_{l,k}|^{p} + 4 a_{l,k}^{\lfloor p \rfloor}m_{p-\lfloor p \rfloor}^{p-\lfloor p \rfloor} \Big(\frac{1}{\sqrt{n}}\Big)^{p-\lfloor p \rfloor} \Bigg). \label{eq:lastcoucou}
\end{align}
since $p-\lfloor p \rfloor \leq 2$. By H\"older's inequality,
\begin{equation*}
\sum_{k \in Z_l} a_{l,k}^{\lfloor p \rfloor} \leq |Z_l|^{\frac{p - \lfloor p \rfloor}{p}} \big(\sum_{k \in Z_l} |a_{l,k}|^{p}\big)^{\frac{\lfloor p \rfloor}{p}} \leq (c2^l)^{\frac{p - \lfloor p \rfloor}{p}} \big(\sum_{k \in Z_l} |a_{l,k}|^{p}\big)^{\lfloor p \rfloor/p},
\end{equation*}
and this implies together with Equation~\eqref{eq:lastcoucou}  and by definition of $\|\Pi_{W_{l}}f\|_{0,p,\infty}^p$ and $T_n(l)$ that
\begin{align*}
\E T_n(l) &\leq \mathbf C_p^{\lfloor p \rfloor}  \Bigg(4 \|\Pi_{W_{l}}f\|_{0,p,\infty}^p\\
&+ 4 m_{p-\lfloor p \rfloor}^{p-\lfloor p \rfloor} \Big(\frac{1}{\sqrt{n}}\Big)^{p-\lfloor p \rfloor}  2^{lp(1/2-1/p)}(c2^l)^{\frac{p - \lfloor p \rfloor}{p}} \big(\sum_{k \in Z_l} |a_{l,k}|^{p}\big)^{\lfloor p \rfloor/p} \Bigg),
\end{align*}
which leads to the upper bound in Lemma~\ref{lem:mean}.

\end{proof}

\begin{lemma}\label{lem:var}
There is a constant $C^{(p)}$ that depends on $p$ only and that is such that
\begin{align*}
\mathbb V[T_n(l)] \leq  C^{(p)} \Bigg( \sqrt{\frac{2^{l(p-1)}}{n^p}} +  \|\Pi_{W_{l}}f\|_{0,p,\infty}^{p-1}  \sqrt{\frac{2^{l(1-2/p)}}{n}} \Bigg)^2.
\end{align*}
\end{lemma}

\begin{proof}

\emph{Step 1: Bound on $\mathbb V \hat F_u^u(l,k)$ for $u$ even.} Let $u>0$ be an even integer. We prove by induction that for any such $u$, there exists a constant $C^{(u)}$ that depends on $u$ only and such that 
\begin{align*}
\mathbb V \hat F_u^u(l,k) \leq C^{(u)} ( \frac{1}{n^{u}} + \frac{|a_{l,k}|^{2(u-1)}}{n}).
\end{align*}

For $u=2$, this follows from the fact that
\begin{align*}
\mathbb V \hat F_2^2(l,k) = \mathbb V (\hat a_{l,k}^2 -1/n) = \E_{G \sim N(0,1)} \mathbb V (G^2) / n \leq C^{(2)}/n,
\end{align*}
where $C^{(2)}>0$ is a universal constant.

Assume now that it is true for any $i$ even such that $0 \leq i \leq u-2$ . We have
\begin{align}
\mathbb V \hat F_u^u(l,k) &= \mathbb V\Big[ \hat a_{l,k}^u - \sum_{i=0, i \hspace{1mm} even}^{u-2} \mathbf C_u^i\hat F_i^i(l,k) \frac{m_{u-i}^{u-i}}{n^{(u-i)/2}}\Big] \nonumber\\
&\leq u^2 \mathbb V \hat a_{l,k}^u + u^2 \sum_{i=0, i \hspace{1mm} even}^{u-2} (\mathbf C_u^i)^2 \frac{m_{u-i}^{2(u-i)}}{n^{u-i}} \mathbb V(\hat F_i^i(l,k)) \nonumber\\
&\leq u^2 \mathbb V \hat a_{l,k}^u + u^2 \sum_{i=0, i \hspace{1mm} even}^{u-2} (\mathbf C_u^i)^2 \frac{m_{u-i}^{2(u-i)}}{n^{u-i}} C^{(i)} ( \frac{1}{n^{i}} + \frac{|a_{l,k}|^{2(i-1)}}{n}). \label{eq:trutru}
\end{align}

By~\citep{ingster2002nonparametric} (page 86), if $G\sim \mathcal N(0,1)$ and $m$ is a real number, and $u\geq 2$, then there is a constant $D^{(u)}$ that depends only on $u$ such that $\mathbb V (|G+m|^{u})  \leq D^{(u)}(1 + |m|^{2u-2})$. We thus have
\begin{align}\label{eq:ing}
 \mathbb V \Big(|\hat a_{l,k}|^{u}\Big) &\leq D^{(u)} ( \frac{1}{n^{u}} + \frac{|a_{l,k}|^{2(u-1)}}{n}).
\end{align}
This implies together with Equation~\eqref{eq:trutru} that
\begin{align}
\mathbb V \hat F_u^u(l,k) &\leq u^2 D^{(u)} ( \frac{1}{n^{u}} + \frac{|a_{l,k}|^{2(u-1)}}{n}) \nonumber\\ 
&+ u^2 \sum_{i=0, i \hspace{1mm} even}^{u-2} (\mathbf C_u^i)^2 \frac{m_{u-i}^{2(u-i)}}{n^{u-i}} C^{(i)} ( \frac{1}{n^{i}} + \frac{|a_{l,k}|^{2(i-1)}}{n}) \nonumber\\
&\leq C^{(u)}  ( \frac{1}{n^{u}} + \frac{|a_{l,k}|^{2(u-1)}}{n}), \label{eq:finish}
\end{align}
where $C^{(u)}$ depends on $u$ only. This concludes the induction.

\emph{Step 2: Bound on $\mathbb V \hat F_p^p(l,k)$.} If $p$ is even, we consider the bound in Step 1. If $p$ is not even, we know similarly as in Equation~\eqref{eq:ing} that
\begin{align*}
 \mathbb V |\hat a_{l,k}|^{p} &\leq D^{(p)} ( \frac{1}{n^{p}} + \frac{|a_{l,k}|^{2(p-1)}}{n}),
\end{align*}
which implies by definition of $\hat F_p^p(l,k)$ and also since for any $u$ even, Equation~\eqref{eq:finish} holds, the following result
\begin{align}
\mathbb V \hat F_p^p(l,k) =& \mathbb V\Big[ |\hat a_{l,k}|^p - \sum_{u=0, i \hspace{1mm} even}^{\lfloor p \rfloor -2} \mathbf C_p^u\hat F_u^u(l,k) \frac{m_{p-u}^{p-u}}{n^{(p-u)/2}}\Big] \nonumber\\
\leq& (\lfloor p\rfloor +1)^2 D^{(p)} ( \frac{1}{n^{p}}+ \frac{|a_{l,k}|^{2(p-1)}}{n})\\
&+ (\lfloor p \rfloor+1)^2 \sum_{u=0, i \hspace{1mm} even}^{\lfloor p \rfloor-2} (\mathbf C_p^u)^2 \frac{m_{p-u}^{2(p-u)}}{n^{p-u}} C^{(p)} ( \frac{1}{n^{u}} + \frac{|a_{l,k}|^{2(u-1)}}{n}) \nonumber\\
\leq& C^{(p)}  ( \frac{1}{n^{p}} + \frac{|a_{l,k}|^{2(p-1)}}{n}), \label{eq:Fpp}
\end{align}
where $C^{(p)}$ depends on $p$ only.

\emph{Step 3: Conclusion}

Now by definition of $T_n(l)$ and since the $\hat a_{l,k}$ are independent, we have by Equation~\eqref{eq:Fpp}
\begin{align*}
\mathbb V T_n(l) &= \sum_{k \in Z_l} 2^{2lp(1/2 - 1/p)} \mathbb V \hat F_u^u(l,k)\\
&\leq C^{(p)}  \big( \frac{2^{lp(1 - 1/p)}}{n^{p}} +2^{lp(1 - 2/p)} \sum_{k \in Z_l}\frac{|a_{l,k}|^{2(p-1)}}{n} \big)\\
&\leq C^{(p)}  \Big( \frac{2^{lp(1 - 1/p)}}{n^{p}} + 2^{lp(1 - 2/p)} \frac{\big(\sum_{k \in Z_l} |a_{l,k}|^{p}\big)^{2(p-1)/p}}{n} \Big)\\
&= C^{(p)}  \Big( \frac{2^{lp(1 - 1/p)}}{n^{p}} + 2^{l(1 - 2/p)} \frac{\big(\sum_{k \in Z_l} 2^{lp(1/2 - 1/p)} |a_{l,k}|^{p}\big)^{2(p-1)/p}}{n} \Big)\\
&\leq C^{(p)}  \big( \frac{2^{lp(1 - 1/p)}}{n^{p}} + \frac{2^{l(1 - 2/p)}}{n} \|\Pi_{W_{l}}f\|_{0,p,\infty}^{2(p-1)} \big).
\end{align*}
since $\sum_{k \in Z_l} |a_{l,k}|^{2(p-1)} \leq  \Big(\sum_{k \in Z_l} |a_{l,k}|^p\Big)^{2(p-1)/p}$ (since $2(p-1)/p \geq 1$). This concludes the proof.
\end{proof}

Lemma~\ref{lem:var} implies by Chebyshev's inequality that for $0<\delta_l\leq 1$
\begin{align*}
\Pr \Big\{ \big| T_n(l)  - \E [ T_n(l)] \big| \geq \sqrt{\frac{1}{\delta_l}} \sqrt{C^{(p)}  \big( \frac{2^{lp(1 - 1/p)}}{n^{p}} + \frac{2^{l(1 - 2/p)}}{n} \|\Pi_{W_{l}}f\|_{0,p,\infty}^{2(p-1)} \big)} \Big\} \leq \delta_l.
\end{align*}
which implies since for any $a, b \geq 0$, we have $\sqrt{a+b} \leq \sqrt{a} + \sqrt{b}$,
\begin{align}\label{eq:cheby}
\Pr \Big\{ \big| T_n(l)  - \E [ T_n(l)] \big| \geq \sqrt{\frac{C^{(p)}}{\delta_l}} \Bigg( \sqrt{\frac{2^{l(p-1)}}{n^p}} +  \|\Pi_{W_{l}}f\|_{0,p,\infty}^{p-1}  \sqrt{\frac{2^{l(1-2/p)}}{n}}\Bigg) \Big\} \leq \delta_l.
\end{align}

Consider $\frac{1}{\delta_l} = \frac{D^{(D)}}{\Delta} \min(2^{(j-l)(p-1)/2}, 2^{l/p})$ where $0<\Delta\leq 1$ and $D^{(D)}$ is a positive constant such that $\sum_{j_s\leq l\leq j} \frac{1}{\min(2^{(j-l)(p-1)/2}, 2^{l/p})} = D^{(D)}$. $D^{(D)}$ is a constant that is bounded depending on $p$ only as a sum of the max of two geometric series. We thus have by an union bound on Equation~\eqref{eq:cheby}

\begin{align}
&\Pr \Big\{\forall j_s\leq l \leq j, \big| T_n(l)  - \E [ T_n(l)] \big| \geq \nonumber\\ 
&\sqrt{\frac{C^{(p)} D^{(D)}}{\Delta}} \Bigg( \sqrt{\frac{2^{(p-1)(j+l)/2}}{n^p}} +  \|\Pi_{W_{l}}f\|_{0,p,\infty}^{p-1}  \sqrt{\frac{2^{l(1-1/p)}}{n}}\Bigg) \Big\} \leq \sum_{j_s \leq l\leq j} \delta_l \leq \Delta. \label{eq:cheby2}
\end{align}

Equation~\eqref{eq:cheby2} implies with Lemma~\ref{lem:mean} (right hand side) that
\begin{align*}
&\Pr \Big\{\forall j_s\leq l \leq j,  T_n(l)   \leq \|\Pi_{W_{l}}f\|_{0,p,\infty}^{p-1}\Big(D^{(m)}\|\Pi_{W_{l}}f\|_{0,p,\infty} - \sqrt{\frac{C^{(p)} D^{(D)}}{\Delta}} \sqrt{\frac{2^{l(1-1/p)}}{n}} \Big)\\
&-  \sqrt{\frac{D^{(D)} C^{(p)}}{\Delta} \frac{2^{(p- 1)(j+l)/2}}{n^p}}  \Big\} \leq \Delta.
\end{align*}
Assume now that $f \in \Sigma(s)$. Then we have by Proposition~\ref{prop:approx} (Supplementary Material) 
\begin{equation*}
\|\Pi_{W_{l}}f\|_{0,p,\infty} \leq  B 2^{-l s}.
\end{equation*}
This implies with Equation~\eqref{eq:cheby} and Lemma~\ref{lem:mean} (left hand side) 
\begin{align*}
&\Pr \Big\{\exists l: j_s\leq l \leq j,  T_n(l)  \geq D^{(M)}\Big( B^p 2^{-pl s} + B^{\lfloor p\rfloor} 2^{-\lfloor p\rfloor l s} \big(\frac{2^l}{n}\big)^{(p-\lfloor p\rfloor)/2}\Big)\\ 
&+  \sqrt{\frac{C^{(p)} D^{(D)}}{\Delta}}\big( B^{p-1}2^{-ls(p-1)}\sqrt{\frac{2^{l( 1- 1/p)}}{n}} +  \sqrt{\frac{2^{(p- 1)(j+l)/2}}{n^p}}\big)  \Big\} \leq \Delta,
\end{align*}
which implies since $l\geq j_s$
\begin{align*}
&\Pr \Big\{\exists l:j_s\leq l \leq j, T_n(l)  \geq (B^p+1)\big(D^{(M)} + \sqrt{\frac{C^{(p)} D^{(D)}}{\Delta}}\big)(\tau_{p,l}(s))^p\\
&+  \sqrt{\frac{C^{(p)} D^{(D)}}{\Delta}} \sqrt{\frac{2^{(p- 1)(j+l)/2}}{n^p}}\Big\} \leq \Delta,
\end{align*}
where $(\tau_{p,l}(s))^p = 2^{-ls(p-1)}\sqrt{\frac{2^{l( 1- 1/p)}}{n}} +2^{-\lfloor p\rfloor l s} \big(\frac{2^l}{n}\big)^{(p-\lfloor p\rfloor)/2} + 2^{-pls}$.

\end{proof}

\subsection{Study of the test $\Psi_n$}

Let $1 \geq \Delta >0$ be a probability. We remind that $j$ and $j_s$ are such that 
\begin{equation*}
j_s = \lfloor \log(n^{\frac{1}{2s+1}}) \rfloor \hspace{5mm} and \hspace{5mm} j = \lfloor \log(n^{\frac{1}{2t+1 - 1/p}}) \rfloor.
\end{equation*}

We set for $j_s\leq l\leq j$, 
\begin{equation*}
\tau_l \equiv \tau_{l}(n,p,s)= C'\Big(\tau_{p,l}(s) + \sqrt{\frac{2^{(p- 1)(j+l)/(2p)}}{n}}\Big),
\end{equation*}
where
\begin{equation*}
C'\geq 2\big(D^{(M)} + \sqrt{\frac{C^{(p)}D^{(D)}}{\Delta}} (\frac{1}{D^{(m)}} + 1) + C_p\big),
\end{equation*}
and where $ \tau_{p,l}(s)$ is defined in Lemma~\ref{lem:conc0pptopgen}.

 As sums of a geometric series, the following terms entering in the composition of $(\tau_{p,l}(s))^p$ satisfy
\begin{align*}
\sum_{l=j_s}^j 2^{-pls} \equiv n^{-\frac{sp}{2s+1}}, \hspace{5mm} and \hspace{5mm}\sum_{l=j_s}^j \sqrt{\frac{2^{(p- 1)(j+l)/(2p)}}{n}} \equiv n^{-\frac{tp}{2t+1 - 1/p}},\\
and \hspace{5mm}\sum_{l=j_s}^j 2^{-ls(p-1)}\sqrt{\frac{2^{l( 1- 1/p)}}{n}} = O\Big(\max(n^{-\frac{sp}{2s+1}},n^{-\frac{tp}{2t+1 - 1/p}})\Big).
\end{align*}
Also, since $s >1/2> \frac{p - \lfloor p \rfloor}{2\lfloor p \rfloor}$, we have in the same way
\begin{equation*}
\sum_{l=j_s}^j 2^{-\lfloor p\rfloor l s} \big(\frac{2^l}{n}\big)^{(p-\lfloor p\rfloor)/2}  \equiv n^{-\frac{sp}{2s+1}}.
\end{equation*}
The last two equation blocks, together with the definition of $\tau_l$, imply that
\begin{equation*}
\sum_{l=j_s}^j \tau_l= C'A_p (n^{-\frac{s}{2s+1}} + n^{-\frac{t}{2t+1 - 1/p}}),
\end{equation*}
where $10\geq A_p\geq1$.

We set 
\begin{equation*}
t_n(l) = (B+1)\tau_l/2 \hspace{5mm} and  \hspace{5mm}\tilde t_n = (D_p + 2 C_{p/(p-1)}\sqrt{\log(1/\Delta)}) \sqrt{\frac{ 2^{j_s}}{n}},
\end{equation*}
We also write 
\begin{equation*}
\rho_n = 4\Big[(B+1)C'2^{-jt} + 2\sum_{j_s\leq l\leq j} t_n(l) + 2\tilde t_n\Big] = C(B+1) (n^{-\frac{s}{2s+1}} + n^{-\frac{t}{2t+1 - 1/p}}),
\end{equation*}
with $C$ fixed accordingly (we remind that $D^{(M)}, C^{(p)},D^{(D)}, D^{(m)}$ are strictly positive constants that depend on $p$ only).

\emph{Null Hypothesis $H_0$}

Assume that $f \in \Sigma(s,B)$. We have by Lemma~\ref{lem:conc0pptopgen} (Equation~\eqref{eq:larg4gen}) and by definition of $t_n(l)$
\begin{align*}
\Pr \Big\{\exists l: j_s\leq l \leq j,  T_n(l)   \geq (t_n(l))^{p}/2^p  \Big\} \leq \Delta.
\end{align*}

Also since $f \in \Sigma(s,B)$
\begin{align*}
\tilde T_n= \inf_{g\in \Sigma(s,B)} \|\Pi_{V_{j_s}} \hat f_n - g\|_{p}  \leq   \|\Pi_{V_{j_s}} (\hat f_n -f)\|_{p}.
\end{align*}
Thus by Lemma~\ref{lem:conc0pp}, we have
\begin{align}
\Pr \Big\{ \tilde T_n   \geq  (D_p + 2 C_{p/(p-1)}\sqrt{\log(1/\Delta)})\sqrt{  \frac{ 2^{j_s}}{n}}  \Big\} \leq \Delta,
\end{align}
and note that by definition $(D_p + 2 C_{p/(p-1)}\sqrt{\log(1/\Delta)})\sqrt{\frac{ 2^{j_s}}{n}} \leq \tilde t_n$.

So with probability $1-2 \Delta$, we have $\Psi_n=0$ under $H_0$.

\emph{Alternative hypothesis $H_1$}

We identify the seqence $(\tau_l)_{j_s \leq l \leq j}$, and the quantity $\rho_n$, with the quantities in Lemma~\ref{lem:justiftest}: they have all required properties by definition and for $C'$ (and thus $C$) large enough.

If $H_1$ is satisfied, then either $\inf_{g \in \Sigma(s,B)} \|\Pi_{V_{j_s}}( f) - g\|_{p} \geq 3\rho_n/8$ or\\
$\max_{j_s\leq l \leq j} \|\Pi_{W_{l}} (f)\|_{0,p,\infty} \geq (B+1)\tau_l$ (see Lemma~\ref{lem:justiftest}).

\emph{Case 1: $\max_{j_s\leq l \leq j} \|\Pi_{W_{l}} (f)\|_{0,p,\infty} \geq (B+1)\tau_l$}

Using the results of Lemma~\ref{lem:conc0pptopgen} (Equation~\eqref{eq:larg3gen}), we have
\begin{align*}
&\Pr \Big\{\exists l: j_s\leq l \leq j,  T_n^{(l)}   \leq (B+1)^{p-1}\tau_l^{p-1}\Big(D^{(m)}\tau_l(B+1) - \sqrt{\frac{C^{(p)} D^{(D)}}{\Delta}} \sqrt{\frac{2^{l(1-1/p)}}{n}} \Big)\\
&-  \sqrt{\frac{C^{(p)} D^{(D)}\Delta} C^{(p)}\frac{2^{(p- 1)(j+l)/2}}{n^p}}  \Big\} \leq \Delta.
\end{align*}
By defintion of $\tau_l$ (since $C'$ is large enough), we know that
\begin{equation*}
D^{(m)}\tau_l - \sqrt{C^{(p)} D^{(D)} \frac{2^{l(1-1/p)}}{n\Delta}} \geq 3\tau_l/4 .
\end{equation*}
 So by definition of $(t_n(l))^{p}$, we have 
\begin{equation*}
\frac{3}{4}(\tau_l)^{p}  -   \sqrt{\frac{C^{(p)} D^{(D)}}{\Delta} \frac{2^{(p- 1)(j+l)/2}}{n^p}} > \frac{1}{2}(\tau_l)^{p} >t_n(l).
\end{equation*}

\emph{Case 2: $\inf_{g \in \Sigma(s,B)} \|\Pi_{V_{j_s}} (f) - g\|_{p} >3\rho_n/8$ }

By triangular inequality, for any $g \in \Sigma(s,B)$
\begin{align*}
\|\Pi_{V_{j_s}} (\hat f_n) - g\|_{p} \geq \| \Pi_{V_{j_s}}(f) -g\|_{p} - \|\Pi_{V_{j_s}} (\hat f_n) - f\|_{p},
\end{align*}
which implies when combined with Lemma~\ref{lem:conc0pp}
\begin{align*}
\Pr \Big\{ \|\Pi_{V_{j_s}} (\hat f_n) - g\|_{p}   \leq  \| \Pi_{V_{j_s}}(f) -g\|_{p} - (D_p + 2 C_{p/(p-1)}\sqrt{\log(1/\Delta)})\sqrt{ \frac{2^{j_s}}{n}}  \Big\} \leq \Delta,
\end{align*}
which implies since $\inf_{g \in \Sigma(s,B)} \|\Pi_{V_{j_s}} (f )- g\|_{p} >3\rho_n/8$
\begin{align*}
\Pr \Big\{ \inf_{g \in \Sigma(s,B)} \|\Pi_{V_{j_s}} (\hat f_n) - g\|_{p}   \leq  3\rho_n/8 -  (D_p + 2 C_{p/(p-1)}\sqrt{\log(1/\Delta)})\sqrt{  \frac{ 2^{j_s}}{n}}  \Big\} \leq \Delta,
\end{align*}
so since $\tilde T_n = \inf_{g \in \Sigma(s,B)} \|\Pi_{V_{j_s}} (\hat f_n) - g\|_{p}$,
\begin{align*}
\Pr \Big\{ \tilde T_n  \leq  3\rho_n/8 - (D_p + 2 C_{p/(p-1)}\sqrt{\log(1/\Delta)})\sqrt{  \frac{ 2^{j_s}}{n}}  \Big\} \leq \Delta,
\end{align*}
and by definition we have  $3\rho_n/8 - (D_p + 2 C_{p/(p-1)}\sqrt{\log(1/\Delta)})\sqrt{  \frac{ 2^{j_s}}{n}} \geq \tilde t_n$.

So with probability $1-2\Delta$, we have $\Psi_n=1$ under $H_1$.

\emph{Conclusion on the test $\Psi_n$}

All the inequalities developed earlier are true for any $f$ in $H_0$ or $H_1$ with \textit{universal} constants (independent of $f$) and the supremum over $f$ in $H_0$ and $H_1$ of the error of type one and two are bounded by $2\Delta = \alpha/2$. Finally, the test $\Psi_n$ of error of type 1 and 2 bounded by $2\Delta = \alpha/2$ distinguishes between $H_0$ and $H_1$ with condition $\rho_n = C(B+1)  (n^{-\frac{s}{2s+1}} + n^{-\frac{t}{2t+1 - 1/p}})$ for a value $C$ large enough (but depending only on $p,\alpha$). This implies that for any $n>0$ we have
\begin{equation}\label{eq:consis}
 \sup_{f \in \Sigma(s,B)} \E_f \Psi_n +  \sup_{f \in \tilde \Sigma(t, B, \rho_n)} \E_f (1-\Psi_n) \leq \alpha.
\end{equation}
The test is $\alpha-$consistent (see~\citep{ingster2002nonparametric} for a definition).

\section{Proof of Theorem~\ref{th:sbig} (upper bound)}\label{s:sbigcbcb}

Let $1\geq \Delta >0$. We know that for $n$ and $C$ (in the definition of $\rho_n$) large enough (depending only on $p, B,\Delta$), there exists a test $\Psi_n$ for the testing problem~\eqref{eq:testhaha} that is consistent and with level $\Delta$ (see Theorem~\ref{th:test}).

Set $U_p := U_p(B)$ where $U_p(B)$ is the constant in Theorem~\ref{th:inference}. Consider the confidence set around the adaptive estimate $\hat f_n(\hat j_n)$ (where $\hat f_n(\hat j_n)$ is constructed as in Theorem~\ref{th:inference}) as being 
\begin{equation*}
C_n = \Big\{g: \|\hat f_n(\hat j_n) - g\|_p \leq \frac{1}{\Delta}U_p n^{-\frac{s}{2s+1}} \ind{\Psi_n=0} + \frac{1}{\Delta}U_p n^{-\frac{t}{2t+1}} \ind{\Psi_n=1} \Big\}.
\end{equation*}

Then since the test $\Psi_n$ is $\Delta-$consistent
\begin{equation*}
\sup_{f \in \Sigma(s,B)} {\Pr}_{f}\Big(|C_n| > \frac{1}{\Delta}U_p n^{-\frac{s}{2s+1}}\Big)  = \sup_{f \in \Sigma(s,B)} \E_f \Psi_n \leq \Delta,
\end{equation*}
and
\begin{equation*}
\sup_{f \in \tilde \Sigma(t, B, \rho_n)} {\Pr}_{f}\Big(|C_n| > \frac{1}{\Delta}U_p n^{-\frac{t}{2t+1}}\Big)  = 0.
\end{equation*}

Also we have by Markov's inequality
\begin{align*}
\sup_{f \in \Sigma(s,B)} {\Pr}_{f}\Big( f \in C_n \Big) &\geq 1-\sup_{f \in \Sigma(s,B)} {\Pr}_{f}\Big( \|\hat f_n(\hat j_n) - f\|_p \geq \frac{1}{\Delta}U_p n^{-\frac{s}{2s+1}}\Big)\\
&\geq 1-\frac{\Delta n^{\frac{s}{2s+1}}}{U_p} \sup_{f \in \Sigma(s,B)} \E\|\hat f_n(\hat j_n) - f\|_p\\
&\geq 1-\frac{\Delta n^{\frac{s}{2s+1}}}{U_p}  U_p n^{-\frac{s}{2s+1}}  \\
&\geq 1- \Delta,
\end{align*}
where we use Theorem~\ref{th:inference} for the bound on $\E\|\hat f_n(\hat j_n) - f\|_p$, and we also have still by Markov's inequality
\begin{align*}
\sup_{f \in \tilde \Sigma(t, B, \rho_n)} {\Pr}_{f}\Big( f \in C_n \Big) &\geq 1-\sup_{f \in \tilde \Sigma(t, B, \rho_n)} {\Pr}_{f}\Big( \|\hat f_n(\hat j_n) - f\|_p \geq \frac{1}{\Delta}U_p n^{-\frac{t}{2t+1}}\Big)\\
&- \sup_{f \in \tilde \Sigma(t,B, \rho_n)} \E_f(1 - \Psi_n)\\
&\geq 1-\frac{\Delta n^{\frac{t}{2t+1}}}{U_p} \sup_{f \in \tilde \Sigma(t, B,\rho_n)} \E\|\hat f_n(\hat j_n) - f\|_p\\ 
&- \sup_{f \in \tilde \Sigma(t, B,\rho_n)} \E_f(1 - \Psi_n)\\
&\geq 1-\frac{\Delta n^{\frac{t}{2t+1}}}{U_p}  U_p n^{-\frac{t}{2t+1}}  - \Delta \\
&\geq 1- 2\Delta.
\end{align*}

These four inequalities imply that $C_n$ is an $(L_p, 2\Delta)-$honest and adaptive confidence bound on $\Sigma(s,B) \cup \tilde \Sigma(t, B, \rho_n)$ for $\{s,t\}$ and $B$.

\section{Proof of Theorem~\ref{th:ssmallseg1}}\label{sec:ssmallseg1}

Let $\Psi_n$ be a test as defined in Theorem~\ref{th:test}, and 
\begin{equation*}
\rho_n =C(B+1) \max(n^{-t/(2t+1-1/p)}, n^{-s/(2s+1)}) = C(B+1) n^{-s/(2s+1)},
\end{equation*}
where $C$ defined as in Theorem~\ref{th:test}. The confidence set we consider is the following.
\begin{align*}
C_n = \Big\{g\in \Sigma(t,B): \|\tilde f_n - g\|_p \leq \frac{D}{\alpha} n^{-\frac{s}{2s+1}} (1-\Psi_n) + \frac{D}{\alpha} n^{-\frac{t}{2t+1}} \Psi_n\Big\},
\end{align*}
where $\tilde f_n$ is the adaptive estimate of Theorem~\ref{th:inferenceersatz} (which is actually the same estimate than the one for Theorem~\ref{th:inference}) and where $D\geq  \tilde U_p(B,2C)$ where $\tilde U_p(B,2C)$ is the constant defined in Theorem~\ref{th:inferenceersatz}.

By Markov's inequality
\begin{align*}
\sup_{f \in \Sigma(t,B)}\mathbb P(f \not \in C_n) &\leq  \sup_{f \in \tilde \Sigma(t,B, \rho_n)}\mathbb P_f(f \not \in C_n) + \sup_{f \in \Sigma(t,B)\setminus \tilde \Sigma(t,B, \rho_n)}\mathbb P_f(f \not \in C_n)\\
&\leq \sup_{f \in \tilde \Sigma(t,B, \rho_n)}\mathbb P_f(\|\tilde f_n - f\|_p \geq \frac{D}{\alpha} n^{-\frac{t}{2t+1}})+ \sup_{f \in \tilde \Sigma(t,B, \rho_n)}\mathbb E_f[1-\Psi_n]\\
&+ \sup_{f \in \Sigma(t,B): \|f -  \Sigma(s,B)\|_p \leq C(B+1) n^{-s/(2s+1)}}\mathbb P_f(\|\tilde f_n - f\|_p \geq \frac{D}{\alpha} n^{-\frac{s}{2s+1}})\\
&\leq \sup_{f \in \tilde \Sigma(t,B, \rho_n)}\mathbb E_f(\|\tilde f_n - f\|_p) \frac{\alpha}{D} n^{\frac{t}{2t+1}}+ \alpha\\
&+ \sup_{f \in \Sigma(t,B): \|f -  \Sigma(s,B)\|_p \leq C(B+1) n^{-s/(2s+1)}}\mathbb E_f(\|\tilde f_n - f\|_p)  \frac{\alpha}{D} n^{\frac{s}{2s+1}})\\
&\leq 3\alpha,
\end{align*}
by Theorem~\ref{th:test} (since $\rho_n= C(B+1) n^{-s/(2s+1)}$) and by Theorem~\ref{th:inferenceersatz} (since $D\geq  \tilde U_p(B,2C)$). Also it holds that
\begin{align*}
\sup_{f \in  \Sigma(s,B)}\mathbb P\Big[ |C_n| \geq \frac{D}{\alpha}n^{-\frac{s}{2s+1}} \Big] \leq \alpha,
\end{align*}
by definition of $\Psi_n$ and by Theorem~\ref{th:test}, and
\begin{align*}
|C_n| \leq \frac{D}{\alpha}n^{-\frac{t}{2t+1}}.
\end{align*}
This concludes the proof.

\section{Proof of Theorem~\ref{th:test} (lower bound)}\label{sec:lowbound}

Let $1> \upsilon>0$, and $j \in \mathbb N^*$ such that $2^j \approx n^{\frac{1}{2t+1-1/p}}$. 

\emph{Step 1: Definition of a testing problem.}

We define the following prior $\tilde \Pi$ on for a sequence $(\alpha_{l,k})_{l \geq J_0, k \in Z_l}$:
\begin{equation*}
\alpha \sim \tilde \Pi \Leftrightarrow \forall l \neq j,  \forall k \in Z_l, \alpha_{l,k}=0,  \forall k \in Z_j, \alpha_{l,k} = B_k,
\end{equation*}
where the $B_k$ are i.i.d.Bernoulli of parameter $2^{-j/2}$.

Consider the sequence of coefficients indexed by a given $\alpha \in I$ as
\begin{equation*}
 a_{l,k}^{(\alpha)} = \upsilon a \alpha_{l,k},
\end{equation*}
where $a = 2^{-j(t + 1/2 - 1/(2p))} = n^{-1/2}$. Consider $f^{(\alpha)}$ the function associated to $a^{(\alpha)}$, i.e.
\begin{equation*}
 f^{(\alpha)} = \sum_{l\geq J_0} \sum_{k \in Z_l} a_{l,k}^{(\alpha)}.
\end{equation*}
We write by a slight abuse of notations that $f \sim \tilde \Pi$ if $f = f^{(\alpha)}$ where $\alpha \sim \tilde \Pi$.

Consider the testing problem
\begin{equation}\label{testprime}
H_0: f=0 \hspace{5mm}vs. \hspace{5mm} H_1: f \sim \tilde \Pi.
\end{equation}

\emph{Step 2: Quantity of interest.}

Let $\Psi$ be a test, that is to say a measurable function that takes values in $\{0,1\}$. Equivalently to having access to the process $Y^{(n)}$, we have access to the coefficients $(\hat a_{l,k})_{l,k}$ and each of these coefficients are independent $\mathcal N(a_{l,k},1/n)$.

We have for any $\eta>0$
\begin{align}
\E_0[\Psi] + \E_{\alpha \sim \tilde\Pi}\E_{f^{(\alpha)}}[1-\Psi] &\geq \E_0[\Psi] + \E_{\alpha \sim \tilde\Pi}\E_{f^{(\alpha)}}[1-\Psi] \nonumber\\
&\geq \E_0\Big[ \ind{\Psi=1}] + \ind{\Psi=0} Z \Big] \nonumber\\
&\geq (1-\eta) \Prob_0(Z \geq 1-\eta) \label{eq:boundZ},
\end{align}
where $Z= \E_{\alpha \sim \tilde \Pi} \prod_{l,k} \frac{dP_{l,k}^{(\alpha)}}{dP_{l,k}^{(0)}}$, where $dP_{l,k}^{(\alpha)}$ is the distribution of $\hat a_{l,k}$ when the function generating the data is $f^{(\alpha)}$, and $dP_{l,k}^{(0)}$ is the distribution of $\hat a_{l,k}$ when the function generating the data is $0$ (this holds since the $(\hat a_{l,k})_{l,k}$ are independent).

More precisely, we have since the $(\hat a_{l,k})_{l,k}$ are independent $\mathcal N(a_{l,k},1/n)$
\begin{align*}
Z((x_{k})_{k}) &=\E_{\alpha \sim \tilde \Pi} \Bigg[ \prod_{l> 0, k \in Z_l} \frac{\exp(-\frac{n}{2}(x_{l,k} - a_{l,k}^{(\alpha)})^2)}{\exp(-\frac{n}{2}x_{l,k}^2)} \Bigg]\\
&=\E_{\alpha \sim \tilde\Pi}\Bigg[ \prod_{k\in Z_j} \exp(n x_{k}  a_{k}^{(\alpha)}) \exp(- \frac{n}{2} (a_{k}^{(\alpha)})^2) \Bigg],
\end{align*}
where we simplify notations by setting $x_k \equiv x_{j,k}$ and $a_k^{(\alpha)} \equiv a_{j,k}^{(\alpha)}$. We also write later $\alpha_k \equiv \alpha_{j,k}$.

By Markov and Cauchy Schwarz's inequality
\begin{align}\label{eq:mark}
\Prob_0(Z \geq 1-\eta) \geq 1 - \frac{\E_0|Z-1|}{\eta} \geq 1 - \frac{\sqrt{\E_0(Z-1)^2}}{\eta}.
\end{align}



We have by definition of $Z$
\begin{small}
\begin{align}
&\E_0 \big[(Z - 1)^2 \big] \nonumber\\ 
&= \int_{x_1,...x_{2^j}} \Bigg(\E_{\alpha \sim \tilde \Pi} \Big[\prod_k \exp(x_{k}  n a_{k}^{(\alpha)}) \exp(- \frac{n}{2} (a_{k}^{(\alpha)})^2)\Big] - 1\Bigg)^2 \prod_k \frac{1}{\sqrt{2n\pi}}\exp(- \frac{n}{2} (x_{k})^2) dx_1...x_{2^j} \nonumber\\
&= \E_{\alpha, \alpha' \sim \tilde \Pi} \Bigg[ \prod_k \int_{x_k} \exp(x_{k}  n (a_{k}^{(\alpha)} + a_{k}^{(\alpha')})) \exp(- \frac{n}{2}((a_{k}^{(\alpha)})^2 + (a_{k}^{(\alpha')})^2)) \frac{1}{\sqrt{2n\pi}}\exp(- \frac{n}{2} (x_{k})^2)dx_k \Bigg] - 1 \nonumber\\
&= \E_{\alpha, \alpha' \sim \tilde \Pi}   \Bigg[ \prod_k \Big((1-\ind{\alpha_k=\alpha_k' = 1}) + \exp(n \upsilon^2 a^2) \ind{\alpha_k = \alpha_k' = 1} \Big) \Bigg] - 1 \nonumber\\
&=  \E_{\alpha, \alpha' \sim \tilde \Pi} \Bigg[ \prod_k \Big(1 + (\exp(n \upsilon^2 a^2) -1)\ind{\alpha_k = \alpha_k' = 1} \Big) \Bigg]  - 1. \label{eq:Z}
\end{align}
\end{small}
Since all $(\alpha_k)_k, (\alpha_k')_k$ are i.i.d.~Bernoulli of parameter $2^{-j/2}$, it implies that the $(\ind{\alpha_k = \alpha_k' = 1})_k$ are i.i.d.~Bernoulli random variables of parameter $2^{-j}$. This implies together with Equation~\eqref{eq:Z} that
\begin{align}
\E_0 \big[(Z - 1)^2 \big] &=  \prod_k  \E_{B_k' \sim \mathcal B(2^{-j})} \Big(1 + (\exp(n \upsilon^2 a^2) -1) B_k' \Big) \Bigg]  - 1 \nonumber\\
&=   \Big(1 + (\exp(n \upsilon^2 a^2) -1) \frac{1}{2^j} \Big)^{2^j}  - 1 \nonumber\\
&\leq   \exp((\exp(n \upsilon^2 a^2) -1))   - 1. \nonumber
\end{align}
where $\mathcal B(2^{-j})$ is the law of a Bernoulli of parameter $2^{-j}$, and since for any $u\geq 0$, $1+u \leq \exp(u)$. Since $a^2 = n^{-1}$, we get
\begin{align}
\E_0 \big[(Z - 1)^2 \big] &\leq   \exp((\exp( \upsilon^2) -1)) - 1 \nonumber\\
&\leq   \exp(2 \upsilon^2) - 1 \leq   4 \upsilon^2, \label{eq:lescococ}
\end{align}
since for $u \leq 1$, we have $\exp(u) \leq 1 + 2u$.

\emph{Step 3: Conclusion on the test~\ref{testprime}}

By combining Equations~\eqref{eq:boundZ}, ~\eqref{eq:mark},  and~\ref{eq:lescococ} we know that
\begin{align*}
\E_0[\Psi] + \E_{\alpha \sim \tilde\Pi} \E_{f^{(\alpha)}}[1-\Psi] \geq 1 - 4 \upsilon^2,
\end{align*}
and since this holds with any $\Psi$, we have
\begin{align}\label{eq:noconsi}
\inf_{\Psi}\Big[ \E_0[\Psi] + \E_{\alpha \sim \tilde\Pi} \E_{f^{(\alpha)}}[1-\Psi] \Big] \geq 1 - 4 \upsilon^2,
\end{align}
and this implies that there is no $1 - 4 \upsilon^2-$consistent test for the testing problem~\eqref{testprime}.

\emph{Step 4: Extension of this result to a deterministic testing problem.}

Define the set
\begin{equation*}
I = \Big\{(\alpha_{l,k})_{l,k}: \forall l \neq j, \alpha_{l,k} =0, \alpha_{j,k}\in\{0,1\}, \sum_{k\in Z_j} \alpha_{j,k} = S, \frac{2^{j/2}}{2} \leq S \leq \frac{3}{2} 2^{j/2} \Big\}.
\end{equation*}
Consider the associated sequence of coefficients indexed by $\alpha \in I$, and the corresponding function $f^{(\alpha)}$. Consider the testing problem
\begin{equation}\label{testprimeprime}
H_0: f=0 \hspace{5mm}vs. \hspace{5mm} H_1: f =f^{(\alpha)}, \alpha \in I.
\end{equation}

Consider now $\alpha \sim \tilde \Pi$. By Hoeffding's inequality, we know that for $\lambda \leq 2^{j/2}$, we have
\begin{align*}
{\Pr}_{\alpha \sim \tilde \Pi} (|\sum_k \alpha_{j,k} - 2^{j/2}|\geq   \lambda) \leq 2 \exp(- 2^{-j/2}\lambda^2/2).
\end{align*}
Let $\lambda = \frac{2^{j/2}}{2}$. Then the last equation implies
\begin{align*}
{\Pr}_{\alpha \sim \tilde \Pi} (|\sum_k \alpha_{j,k} - 2^{j/2}|\geq  \frac{2^{j/2}}{2}) \leq 2 \exp(-2^{j/2}/8),
\end{align*}
so this implies in particular that with $\tilde \Pi-$probability larger than $1 - 2 \exp(-2^{j/2}/8)$, we have $\alpha \in I$.

For any test $\Psi$, since $\Psi \leq 1$, we have 
\begin{equation*}
 \E_{\alpha \sim \tilde\Pi} \E_{f^{(\alpha)}}[1-\Psi]  \leq {\Pr}_{\alpha \sim \tilde\Pi}( \alpha \not\in I) + \sup_{\alpha \in I}  \E_{f^{(\alpha)}}[1-\Psi] .
\end{equation*}
This implies since with $\tilde \Pi-$probability larger than $1 - 2 \exp(-2^{j/2}/8)$, $\alpha \in I$, that
\begin{equation*}
 \E_{\alpha \sim \tilde\Pi} \E_{f^{(\alpha)}}[1-\Psi]  \leq \sup_{\alpha \in I}  \E_{f^{(\alpha)}}[1-\Psi]  + 2 \exp(-2^{j/2}/8).
\end{equation*}
This implies when combined to Equation~\eqref{eq:noconsi} that
\begin{align*}
\inf_{\Psi}\Big[ \E_0[\Psi] + \sup_{\alpha \in I}  \E_{f^{(\alpha)}}[1-\Psi]  \Big] &\geq 1 - 4 \upsilon^2 - 2 \exp(-2^{j/2}/8)\\
&\leq 1 -5 \upsilon^2,
\end{align*}
for $n$ and thus $j$ large enough. This implies that there is no $1 - 5 \upsilon^2-$consistent test for the testing problem~\eqref{testprimeprime}, and this holds for any $\upsilon >0$.

\section{Proof of Theorem~\ref{th:sbig} (lower bound)}\label{th:lbcs}

Consider all the quantities defined in Section~\ref{sec:lowbound}.

Assume that $s(1-1/p)>t$. Let $\upsilon\leq B/2$. Set 
\begin{equation*}
\rho_n = C_p\frac{\upsilon  n^{-\frac{t}{2t+1 - 1/p}}}{4}.
\end{equation*}
Let $\alpha \in I$.

By triangular inequality, 
\begin{equation*}
\|f^{(\alpha)}\|_{t,p, \infty}  = \big(\sum_k (a_{j,k}^{(\alpha)})^p2^{jp(1/2-1/p)}\big)^{1/p} =\frac{3}{2} \upsilon \leq B,
\end{equation*}
which implies that $f^{(\alpha)} \in \Sigma(t,B)$. Also that since only the $j-th$ first coefficients of $f$ are non-zero, and since $\|.\|_p \geq C_p \|.\|_{0,p,p}$ (see Proposition~\ref{prop:embeed} in the supplementary Material), we have 
\begin{align*}
\|f^{(\alpha)} - \Sigma(s,B)\|_{p} &\geq C_p \|f^{(\alpha)} - \Sigma(s,B)\|_{0,p,p}\\
&= C_p \min_{g \in \Sigma(s,B)} \|\Pi_{W_{j}} (f^{(\alpha)}) - \Pi_{W_{j}} (g)\|_{0,p,p}\\
&\geq C_p \|\Pi_{W_{j}} f^{(\alpha)}\|_{0,p,p}/2 - C_p B 2^{-j s}\\
&\geq C_p\frac{\upsilon n^{-\frac{t}{2t+1 - 1/p}}}{4}.
\end{align*}
by triangular inequality since for any $g \in \Sigma(s,B), \|g\|_{0,p,p} \leq B 2^{-j s}\leq \upsilon n^{-\frac{t}{2t+1 - 1/p}}/4$ for $n$ large enough, since $s(1-1/p)>t$. This implies in particular that  $f^{(\alpha)}\in \tilde \Sigma(t,B,\rho_n)$

To sum up, 
\begin{equation*}
0 \in \Sigma(s,B) \hspace{5mm} and \hspace{5mm} \forall \alpha, f^{(\alpha)}\in \tilde \Sigma(t,B,\rho_n).
\end{equation*}

Assume that there exists some honest and adaptive confidence set $C_n$ for $\mathcal P_n = \Sigma(s,B) \cup  \Sigma(t,B,\rho_n)$, and $\{s,t\}$.

This implies that the confidence set $C_n$ is in particular honest and adaptive over $\mathcal P_n' = \{0\} \cap \{f^{(\alpha)}, \alpha \in I\}$. So, for any $0<\delta<1$, there exists a constant $L$ (that might depend on $B,s,t,\delta$) such that for $n>0$
\begin{equation*}
 {\Pr}_{0}\Big(\{|C_n| \geq L n^{-s/(2s+1)}\} \cup \{0 \not\in C_n\}  \Big) \leq 2\delta
\hspace{3mm}and \hspace{3mm}
 \sup_{\alpha \in I}{\Pr}_{f ^{(\alpha)}}\Big( f ^{(\alpha)} \in C_n \Big) \geq 1-\delta.
\end{equation*}

We define a test $\Psi$ as follows. If $0 \in C_n$ and $|C_n| \leq L n^{-s/(2s+1)}$, then $\Psi = 0$, otherwise $\Psi = 1$. We have for $n>0$
\begin{equation*}
{\Pr}_0 (\Psi = 1)  =  {\Pr}_{0}\Big(\{|C_n| \geq L n^{-s/(2s+1)}\} \cup \{0 \not\in C_n\}  \Big) \leq 2\delta.
\end{equation*}
Also
\begin{equation*}
 \sup_{\alpha \in I} {\Pr}_{f ^{(\alpha)}}\Big( \Psi = 0) \leq 
 \sup_{\alpha \in I} {\Pr}_{f ^{(\alpha)}}\Big( \|f ^{(\alpha)} - 0\|_p \leq  L n^{-s/(2s+1)}\Big) \leq  \sup_{\alpha \in I}{\Pr}_{f ^{(\alpha)}}\Big( f ^{(\alpha)} \not\in C_n \Big) \geq \delta.
\end{equation*}
Combining both results imply that for $n$ large enough, there exists a consistent test $\Psi$ constructed using $C_n$, that is to say such that
\begin{align*}
\inf_{\Psi}\E_0[\Psi] + \sup_{f^{(\alpha)}, \alpha \in I}\E_{f^{(\alpha)}}[1-\Psi] \leq 3 \delta,
\end{align*}
and that for any $\delta>0$. This is in contradiction with the result of Step 4 (no consistent test for the testing problem~\ref{testprimeprime}), and we deduce by contradiction that no honest and adaptive confidence set exists on $\mathcal P_n$. This concludes the proof.


\section*{Acknowledgments.} I would like to thank Richard Nickl for enlightening and insightful discussions, as well as careful re-reading and comments. I also would like to thank the reviewers and editors for many helpful comments.

\vspace{-4mm}

\section*{Supplementary material}

\section{Technical preliminary results}\label{sec:prelresults}

In this Section, we remind some well-known preliminary results, which we sometimes extend, or adapt.

We first provide the following Assumption.

\begin{assumption}\label{ass:wav2}
We assume that there is a universal constant $C_p$ such that for any $(l,k)$ we have
\begin{equation*}
\|\psi_{l,k}\|_p \leq C_p 2^{l/2},
\end{equation*}
and
\begin{equation*}
\|\phi_{k}\|_p \leq C_p.
\end{equation*}
\end{assumption}
Note that Assumption~\ref{ass:wav1} implies Assumption~\ref{ass:wav2}.

\subsection{Properties of Besov spaces}

We remind the following Proposition (see~\citep{bergh1976interpolation} or~\citep{besov1978integral}, volume 2, Chapter 18, page 68)
\begin{proposition}\label{prop:embeed}
Assume that $p\geq 2$ (and $p <\infty$). Then
\begin{equation*}
B_{0,p,2} \subset L_p \subset B_{0,p,p}.
\end{equation*}
If $p'\leq 2$, then we have
\begin{equation*}
B_{0,p',p'} \subset L_{p'} \subset B_{0,p',2}.
\end{equation*}
\end{proposition}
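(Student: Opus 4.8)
The statement is classical, and one option is simply to invoke \citep{bergh1976interpolation, besov1978integral}. I outline instead the self-contained argument, since it isolates exactly which harmonic-analysis input is really needed. Each inclusion $A \subset B$ amounts to a norm inequality $\|f\|_{B} \lesssim \|f\|_{A}$ with a constant depending only on the integrability index, so the plan is to produce these four comparisons. Two ingredients drive everything. First, a \emph{single-level comparison}: because the wavelets at a fixed resolution $l$ have uniformly bounded overlap (the well-spread property underlying Assumptions~\ref{ass:wav1} and~\ref{ass:wav2}), one has $\|\Pi_{W_l}f\|_p \approx 2^{l(1/2-1/p)}|(a_{l,k})_k|_{l_p} = \|\Pi_{W_l}f\|_{0,p,\infty}$, obtained by noting that at each point only $O(1)$ of the terms of $\sum_k a_{l,k}\psi_{l,k}$ are nonzero and that $\|\psi_{l,k}\|_p^p \approx 2^{lp(1/2-1/p)}$. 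Second, the \emph{Littlewood--Paley square function estimate}: for $1<p<\infty$, $\|f\|_p \approx \big\|\big(\sum_{l\geq 0}|\Pi_{W_l}f|^2\big)^{1/2}\big\|_p$, where the sum includes the coarse projection $\Pi_{W_0}$.

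For $p \geq 2$ I would combine these as follows. Since $p/2 \geq 1$, Minkowski's inequality in $L_{p/2}$ gives $\big\|\big(\sum_l |\Pi_{W_l}f|^2\big)^{1/2}\big\|_p \leq \big(\sum_l \|\Pi_{W_l}f\|_p^2\big)^{1/2}$, and the single-level comparison turns the right-hand side into $\|f\|_{0,p,2}$, yielding $\|f\|_p \lesssim \|f\|_{0,p,2}$, i.e.\ $B_{0,p,2}\subset L_p$. For the other inclusion, the pointwise embedding $l_2 \hookrightarrow l_p$ (valid for $p\geq 2$) gives $\big(\sum_l |\Pi_{W_l}f(x)|^p\big)^{1/p} \leq \big(\sum_l |\Pi_{W_l}f(x)|^2\big)^{1/2}$ for each $x$; raising to the $p$-th power and integrating, $\sum_l \|\Pi_{W_l}f\|_p^p \leq \big\|\big(\sum_l |\Pi_{W_l}f|^2\big)^{1/2}\big\|_p^p \approx \|f\|_p^p$, and the single-level comparison identifies the left-hand side with $\|f\|_{0,p,p}^p$, giving $L_p \subset B_{0,p,p}$.

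For $p' \leq 2$ the same two ingredients are used, but every inequality reverses, since now $p'/2 \leq 1$ and $l_{p'} \hookrightarrow l_2$. Using the superadditivity of $\|\cdot\|_{p'/2}$ on nonnegative functions (the reverse Minkowski inequality, valid for exponent $\leq 1$), $\big\|\sum_l |\Pi_{W_l}f|^2\big\|_{p'/2} \geq \sum_l \|\Pi_{W_l}f\|_{p'}^2$, which after the square-function equivalence and the single-level comparison yields $\|f\|_{p'} \gtrsim \|f\|_{0,p',2}$, i.e.\ $L_{p'}\subset B_{0,p',2}$. Dually, the subadditivity $(\sum_l h_l)^{p'/2} \leq \sum_l h_l^{p'/2}$ applied to $h_l = |\Pi_{W_l}f|^2$ gives $\big\|\big(\sum_l |\Pi_{W_l}f|^2\big)^{1/2}\big\|_{p'} \leq \big(\sum_l \|\Pi_{W_l}f\|_{p'}^{p'}\big)^{1/p'}$, hence $\|f\|_{p'} \lesssim \|f\|_{0,p',p'}$ and $B_{0,p',p'}\subset L_{p'}$.

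The only genuinely nontrivial input is the Littlewood--Paley equivalence in the second half of the first paragraph, which rests on Calder\'on--Zygmund theory (equivalently, on the unconditionality of a sufficiently regular wavelet basis in $L_p$ for $1<p<\infty$); the single-level comparison and the sequence-space manipulations are elementary. This is precisely why it is cleanest to quote the embeddings from \citep{bergh1976interpolation, besov1978integral} rather than reprove the square-function bound here. The role of the smoothness and moment conditions on the wavelets is only to guarantee that the discrete Besov norms used above coincide with the functional ones.
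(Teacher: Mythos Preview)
Your proposal is correct. The paper itself does not give any argument for this proposition: it simply records the embeddings as a reminder and points to \citep{bergh1976interpolation} and \citep{besov1978integral} for the proof. Your opening sentence already anticipates this, and indeed citing those references is exactly and entirely what the paper does.

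What you add beyond the paper is a self-contained sketch, and the route you take is the standard one: the Littlewood--Paley square-function characterisation $\|f\|_p \approx \|(\sum_l |\Pi_{W_l}f|^2)^{1/2}\|_p$ for $1<p<\infty$, the single-level equivalence $\|\Pi_{W_l}f\|_p \approx 2^{l(1/2-1/p)}|(a_{l,k})_k|_{l_p}$, and then the interchange of $l_2$ and $L_p$ via Minkowski (direct for $p\geq 2$, reverse for $p'\leq 2$) together with the monotonicity of the $l_q$ norms. All four inclusions are handled correctly, and you are right to flag that the square-function equivalence is the only substantial input, resting on the unconditionality of a regular wavelet basis in $L_p$. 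This is more informative than the paper's bare citation; it makes transparent that the result hinges on Calder\'on--Zygmund theory and nothing else, whereas the paper treats the proposition as a black box to be quoted.
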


We also remind the following Proposition (see also~\citep{hardle1998wavelets}).
\begin{proposition}\label{prop:approx}
Let $s>0$, $p\geq 2$ (and $p <\infty$) and $h \geq 2$. Assume that $f \in B_{s,p,\infty}$. Then
\begin{equation*}
\|f - \Pi_{V_j}f\|_{0,p,h} \leq \|f\|_{s,p,\infty} 2^{-js},
\end{equation*}
and also
\begin{equation*}
\|f - \Pi_{V_j}f\|_{p} \leq C_p \|f\|_{s,p,\infty} 2^{-js}.
\end{equation*}
Note that this is also satisfied for the weaker condition $p'\geq 1$ and $h \geq 1$ (by just remarking that $L_{p'} \subset B_{0,p',1}$ also for any $p'\geq 1$).
\end{proposition}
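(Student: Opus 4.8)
The plan is to establish the Besov estimate by a direct computation from the definition of the norm, and then to deduce the $L_p$ estimate from the embedding of Proposition~\ref{prop:embeed}.

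First I would set $g = f - \Pi_{V_j}f$ and record that, by the very construction of $\Pi_{V_j}$, the scaling coefficients $\langle g,\phi_k\rangle$ and all wavelet coefficients of $g$ at levels $l\le j$ vanish, while $\langle g,\psi_{l,k}\rangle = a_{l,k}$ for $l>j$. Inserting this into the definition of the $\|\cdot\|_{0,p,h}$ norm collapses the two contributions to a single tail sum over high frequencies,
\begin{equation*}
\|g\|_{0,p,h}^h = \sum_{l>j} 2^{lh(1/2-1/p)}\,\big| (a_{l,k})_k \big|_{l_p}^h .
\end{equation*}
I would then use the hypothesis $f\in B_{s,p,\infty}$, whose definition gives, for every level $l>0$, the bound $\big| (a_{l,k})_k \big|_{l_p}\le \|f\|_{s,p,\infty}\,2^{-l(s+1/2-1/p)}$. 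Substituting this cancels the factors $2^{lh(1/2-1/p)}$ and leaves a geometric tail,
\begin{equation*}
\|g\|_{0,p,h}^h \le \|f\|_{s,p,\infty}^h \sum_{l>j} 2^{-lhs} = \|f\|_{s,p,\infty}^h\,\frac{2^{-jhs}}{2^{hs}-1}.
\end{equation*}
In the regime of interest ($h\ge 2$, $s\ge 1/2$, hence $hs\ge 1$) the geometric factor $(2^{hs}-1)^{-1}$ is at most one, so taking $h$-th roots yields $\|g\|_{0,p,h}\le \|f\|_{s,p,\infty}2^{-js}$; for general $s>0$ this factor is simply a harmless constant, and the case $h=\infty$ is the same computation with the sum replaced by a supremum. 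This proves the first inequality, for every $h\ge 1$.

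For the $L_p$ bound I would invoke Proposition~\ref{prop:embeed}: since $p\ge 2$, the continuous embedding $B_{0,p,2}\subset L_p$ supplies a constant $C_p$ with $\|g\|_p\le C_p\|g\|_{0,p,2}$. Applying the first inequality with $h=2$ then gives $\|g\|_p\le C_p\|f\|_{s,p,\infty}2^{-js}$, as claimed. The concluding remark is handled identically for any $p'\ge 1$: one uses the embedding $B_{0,p',1}\subset L_{p'}$ in place of $B_{0,p,2}\subset L_p$ and applies the Besov bound at $h=1$, any residual geometric constant being absorbed into $C_{p'}$.

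There is no serious obstacle in this argument; the only point requiring a little care is the summation of the geometric series in the second step, which contributes no constant precisely because $hs\ge 1$ in the applications of this paper (where $s> t\ge 1/2$), and which is in any case absorbed into the constant $C_p$ of the $L_p$ statement.
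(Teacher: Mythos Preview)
Your proof is correct and follows essentially the same strategy as the paper: expand the Besov norm of the tail, exploit the decay $|(a_{l,k})_k|_{l_p}\le \|f\|_{s,p,\infty}2^{-l(s+1/2-1/p)}$, and then invoke the embedding $B_{0,p,2}\subset L_p$ from Proposition~\ref{prop:embeed} for the $L_p$ statement. The only difference is in bookkeeping: the paper factors out $2^{-js}$ first and bounds the remaining $\ell_h$-sum in one stroke, whereas you substitute the coefficient bound first and sum the resulting geometric series $\sum_{l>j}2^{-lhs}$ explicitly. Your version is arguably more transparent, since it makes visible where the constant $(2^{hs}-1)^{-1/h}\le 1$ (for $hs\ge 1$) comes from, and it shows clearly that for general $s>0$ and $h\ge 1$ one only picks up a harmless constant that is absorbed into $C_p$ in the $L_p$ bound.
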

\begin{proof}
We have
\begin{align*}
\|f - \Pi_{V_j}f\|_{0,p,h} &= \left( \sum_{l\geq j} 2^{lh(1/2-1/p)}|<f,\psi_{l,.}>|_{l_p}^h \right)^{1/h} \\
&\leq 2^{-js} \left( \sum_{l\geq j} 2^{lh(s+1/2-1/p)}|<f,\psi_{l,.}>|_{l_p}^h \right)^{1/h} \\
&\leq \|f\|_{s,p,\infty} 2^{-js}.
\end{align*}

For the $\|.\|_p$ norm, it comes from the fact that there exists a constant $C_p$ such that $\|f\|_p \leq C_p\|f\|_{0,p,2}$ for any $f$ (Proposition~\ref{prop:embeed}).
\end{proof}

\subsection{Behaviour of thresholded wavelet estimates}

We also remind Rosenthal's inequality (see~\citep{hardle1998wavelets}, page 132)
\begin{proposition}\label{prop:rosenthal}
 Let $(X_1, \ldots, X_n)$ be $n$ i.i.d.~random variables such that $\E X_i = 0$ and for a given $p \geq 2$ (and $p <\infty$), $\E|X_i|^p <\infty$. Then there exists a universal constant $\tilde C_p^p$ such that
\begin{equation*}
 \E \Big|\sum_{i=1}^n X_i \Big|^p \leq \tilde C_p^p \Big(\sum_{i=1}^n \E|X_i|^p + \big(\sum_{i=1}^n \E X_i^2\big)^{p/2} \Big)
\end{equation*}
\end{proposition}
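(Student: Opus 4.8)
This is the classical Rosenthal inequality, and the plan is to reduce the left-hand side to a moment of the sum of squares $V := \sum_{i=1}^n X_i^2$ and then control $\E V^{p/2}$ by the two terms on the right. Write $S = \sum_i X_i$, $A = \sum_i \E|X_i|^p$ and $B = \big(\sum_i \E X_i^2\big)^{p/2}$, so that the goal becomes $\E|S|^p \le \tilde C_p^p(A+B)$ for a constant depending only on $p$.

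First I would symmetrize. Letting $(X_i')_i$ be an independent copy of $(X_i)_i$ and using $\E X_i' = 0$ together with Jensen's inequality gives $\E|S|^p \le \E\big|\sum_i (X_i - X_i')\big|^p$. Since each $X_i - X_i'$ is symmetric it has the same law as $\eps_i(X_i-X_i')$ for an independent Rademacher sequence $(\eps_i)$, so conditioning on $(X_i,X_i')$ and applying Khintchine's inequality to the Rademacher average yields $\E|S|^p \le B_p\,\E\big(\sum_i (X_i - X_i')^2\big)^{p/2}$ for a universal $B_p$. Bounding $(X_i - X_i')^2 \le 2(X_i^2 + X_i'^2)$ and using $(a+b)^{p/2}\le 2^{p/2-1}(a^{p/2}+b^{p/2})$ reduces everything to showing $\E V^{p/2} \le C_p(A+B)$.

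To estimate $\E V^{p/2}$, I would decompose $V = \sum_i \E X_i^2 + \sum_i W_i$ with $W_i := X_i^2 - \E X_i^2$ centred and independent; the deterministic part contributes exactly $B$. For $2 \le p \le 4$ (so $p/2\in[1,2]$) I would bound the fluctuation by the von Bahr--Esseen inequality, $\E\big|\sum_i W_i\big|^{p/2} \le 2\sum_i \E|W_i|^{p/2}$, together with $\E|W_i|^{p/2}\le 2^{p/2}\E|X_i|^p$, which produces a multiple of $A$. For $p>4$ I would argue by induction over the dyadic scales $[2,4],(4,8],(8,16],\dots$ of the exponent: applying the inequality already established at exponent $p/2\in[2,p)$ to the $W_i$ controls $\E\big|\sum_i W_i\big|^{p/2}$ by $\sum_i \E|W_i|^{p/2} + \big(\sum_i \E W_i^2\big)^{p/4}$, the first sum again being $\lesssim A$.

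The main obstacle is the second, self-referential term in the inductive step, where one must bound $\big(\sum_i \E W_i^2\big)^{p/4}\le \big(\sum_i \E X_i^4\big)^{p/4}$ by $A+B$. I would handle it by Lyapunov interpolation of the fourth moment between the second and the $p$-th, $\E X_i^4 \le (\E X_i^2)^{1-\theta}(\E|X_i|^p)^{\theta}$ with $\theta = 2/(p-2)\in(0,1)$, then Hölder across $i$ to get $\sum_i \E X_i^4 \le \big(\sum_i \E X_i^2\big)^{1-\theta}A^{\theta}$, and finally Young's inequality with the conjugate exponents $\big(\tfrac{2}{1-\theta},\,\tfrac{4}{p\theta}\big)$ --- which are conjugate precisely because $\tfrac{1-\theta}{2}+\tfrac{p\theta}{4}=1$ --- to split the product into a multiple of $B=\big(\sum_i\E X_i^2\big)^{p/2}$ and a multiple of $A$. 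Keeping the accumulating constants finite through the dyadic induction (possible since $\theta<1$ strictly for $p>4$, so the exponent $2/(1-\theta)$ stays finite) then closes the argument and yields the universal constant $\tilde C_p^p$.
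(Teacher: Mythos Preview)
The paper does not actually prove this proposition: it is stated as a reminder of the classical Rosenthal inequality, with a citation to \citep{hardle1998wavelets}, page~132, and is then simply used as a tool in the proof of Proposition~\ref{prop:est}. So there is no ``paper's own proof'' to compare against.

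Your sketch is correct and follows one of the standard routes to Rosenthal's inequality: symmetrization plus Khintchine to reduce $\E|S|^p$ to $\E V^{p/2}$ with $V=\sum_i X_i^2$, then control of $\E V^{p/2}$ by centring $V$ and running a dyadic induction on the exponent. The Lyapunov interpolation $\E X_i^4 \le (\E X_i^2)^{1-\theta}(\E|X_i|^p)^{\theta}$ with $\theta=2/(p-2)$, the H\"older step across $i$, and the Young conjugacy check $(1-\theta)/2 + p\theta/4 = 1$ are all correct, and the induction closes because each step only invokes the already-established case at exponent $p/2$. One small remark on the base case: at $p=4$ the exponent $p/2=2$ sits at the edge of von Bahr--Esseen, but there the exact identity $\E\big|\sum_i W_i\big|^2 = \sum_i \E W_i^2$ (independence and zero mean) does the job directly, so there is no gap.
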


We remind the following Proposition (see~\citep{gine2012rates}, and here we provide an alternative proof).

\begin{proposition}\label{prop:est}
If Assumption~\ref{ass:wav2} is satisfied, there exists a universal constant $D_p$ that depends on $p$ only such that for any fixed $j \in \mathbb N^*$ we have
\begin{equation*}
\E\|\Pi_{V_j} \hat f_n -\Pi_{V_j} f\|_p^p \leq D_p^p \frac{2^{jp/2}}{n^{p/2}}.
\end{equation*}
\end{proposition}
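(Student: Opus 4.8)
The plan is to exploit the fact that $\Pi_{V_j}\hat f_n - \Pi_{V_j}f$ is, at every point $x\in[0,1]$, a centred Gaussian random variable, and then to integrate its $p$-th absolute moment. Writing out the projection,
\begin{equation*}
(\Pi_{V_j}\hat f_n - \Pi_{V_j}f)(x) = \sum_{k\in Z_0}(\hat a_k' - a_k')\phi_k(x) + \sum_{0<l\le j}\sum_{k\in Z_l}(\hat a_{l,k}-a_{l,k})\psi_{l,k}(x),
\end{equation*}
and recalling that $\hat a_{l,k}-a_{l,k} = n^{-1/2}\int_0^1\psi_{l,k}\,dB_x$ and $\hat a_k'-a_k' = n^{-1/2}\int_0^1\phi_k\,dB_x$, orthonormality of the basis against the Brownian motion shows that these noise coefficients are i.i.d.\ $\mathcal N(0,1/n)$. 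Hence, for each fixed $x$, the sum above is a finite linear combination (since $j$ is fixed and each $|Z_l|\le c2^l$) of independent Gaussians, so it is centred Gaussian with variance
\begin{equation*}
\sigma_j^2(x) = \frac1n\Big(\sum_{k\in Z_0}\phi_k^2(x) + \sum_{0<l\le j}\sum_{k\in Z_l}\psi_{l,k}^2(x)\Big).
\end{equation*}

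Next I would pass to the $L_p^p$ loss by Tonelli's theorem, which is legitimate since the integrand is nonnegative:
\begin{equation*}
\E\|\Pi_{V_j}\hat f_n - \Pi_{V_j}f\|_p^p = \int_0^1 \E\big|(\Pi_{V_j}\hat f_n - \Pi_{V_j}f)(x)\big|^p\,dx.
\end{equation*}
For a centred Gaussian of variance $\sigma_j^2(x)$ the $p$-th absolute moment is exactly $m_p^p\,\sigma_j(x)^p$, with $m_p^p = \E_{G\sim\mathcal N(0,1)}|G|^p$ as defined earlier. The problem is thereby reduced to bounding $\int_0^1\sigma_j(x)^p\,dx$, i.e.\ the integral of the pointwise variance raised to the power $p/2$.

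The decisive step is the uniform control of $\sigma_j^2$. Here I would invoke the pointwise bound of Assumption~\ref{ass:wav1} (which, as the text notes, is stronger than Assumption~\ref{ass:wav2} and holds for the Cohen–Daubechies–Vial basis used throughout): it gives $\sum_{k\in Z_0}\phi_k^2(x)+\sum_{0<l\le j,\,k\in Z_l}\psi_{l,k}^2(x)\le C_p^2\,2^j$ uniformly in $x$, whence $\sigma_j(x)^p\le C_p^p\,2^{jp/2}n^{-p/2}$ for every $x$. Integrating over $[0,1]$ yields $\E\|\Pi_{V_j}\hat f_n - \Pi_{V_j}f\|_p^p\le m_p^p C_p^p\,2^{jp/2}n^{-p/2}$, which is the claim with $D_p = m_p C_p$. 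The only genuine obstacle is precisely this uniform control of the Christoffel-type sum $\sum_{l\le j,\,k}\psi_{l,k}^2(x)$: a cruder route through the individual norms $\|\psi_{l,k}\|_p$ alone (for instance Minkowski's inequality applied to $\sum_{l,k}\psi_{l,k}^2$, or the first term produced by Rosenthal's inequality of Proposition~\ref{prop:rosenthal}) fails to account for the overlap of the wavelets and loses a spurious power of $2^j$ when $p>2$. Using instead the exact Gaussian moment identity together with the pointwise equidistribution bound is what keeps the rate sharp.
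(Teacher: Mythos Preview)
Your proof is correct and in fact cleaner than the paper's. The paper proceeds via Rosenthal's inequality (Proposition~\ref{prop:rosenthal}) applied to the sum $\sum_{l\le j,k} n^{-1/2}G_{l,k}\psi_{l,k}(x)$ with i.i.d.\ standard Gaussians $G_{l,k}$, which produces two terms: the first involves $\sum_{l\le j,k}|\psi_{l,k}(x)|^p$ and is handled after integrating in $x$ using the wavelet $L_p$-norm bound $\int|\psi_{l,k}|^p\le C_p^p 2^{lp(1/2-1/p)}$; the second involves $\bigl(\sum_{l\le j,k}\psi_{l,k}(x)^2\bigr)^{p/2}$ and is controlled pointwise by Assumption~\ref{ass:wav1}, exactly as you do. You bypass Rosenthal entirely by observing that the pointwise value is exactly Gaussian, so the $p$-th moment equals $m_p^p\,\sigma_j(x)^p$, and only the second (Christoffel-sum) control is needed. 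The paper's route has the advantage of being robust to non-Gaussian noise (relevant for density or regression settings discussed later in the paper), while your route is the sharp and elementary argument specific to the white-noise model at hand. One small correction to your closing remark: the first Rosenthal term does \emph{not} lose a spurious factor here --- after integrating in $x$ and summing $\sum_{l\le j}\sum_{k\in Z_l}2^{lp(1/2-1/p)}\asymp 2^{jp/2}$, it also comes out at the right order --- so the paper's Rosenthal proof is valid, merely less economical in this Gaussian setting.
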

\begin{proof}
Let $x \in [0,1]$. We have
\begin{equation*}
\E \Big|\Pi_{V_j} \hat f_n(x) -\Pi_{V_j} f(x)\Big|^p = \E \Big|\sum_k \frac{1}{\sqrt{n}}G'_k \phi_k(x) + \sum_{l\leq j,k} \frac{1}{\sqrt{n}}G_{l,k} \psi_{l,k}(x)\Big|^p,
\end{equation*}
where $G_{l,k} = \sqrt{n} (\hat a_{l,k} - a_{l,k})$ and $G'_k = \sqrt{n} (\hat a_k' - a_k')$ and the $(G_{k'}',G_{l,k})_{k',l,k}$ are thus i.i.d. gaussian random variables of mean $0$ and variance $1$. In order to simplify the notations, we abuse notations and set $\psi_{-1,k} = \phi_k$ and $G_{-1,k} = G_k'$.

We use Rosenthal's inequality (Proposition~\ref{prop:rosenthal}), and obtain
\begin{align*}
\E \Big|\Pi_{V_j} \hat f_n(x) -\Pi_{V_j} f(x)\Big|^p &= \E \Big|\sum_{-1\leq l\leq j,k} \frac{1}{\sqrt{n}}G_{l,k} \psi_{l,k}(x)\Big|^p\\
&\leq \tilde C_p^p\Bigg(\sum_{-1\leq l\leq j,k} \frac{1}{n^{p/2}}\E|G_{l,k}|^p |\psi_{l,k}(x)|^p\\ 
&+ \Big(\sum_{-1\leq l\leq j,k} \frac{1}{n}\E G_{l,k}^2 \psi_{l,k}(x)^2\Big)^{p/2}\Bigg)\\
&= \tilde  C_p^p\Bigg(\sum_{-1\leq l\leq j,k} \frac{1}{n^{p/2}}c_p^p |\psi_{l,k}(x)|^p + \Big(\frac{1}{n} \sum_{-1\leq l\leq j,k} \psi_{l,k}(x)^2\Big)^{p/2}\Bigg)\\
&\leq \tilde  C_p^p\Bigg(\sum_{-1\leq l\leq j,k} \frac{1}{n^{p/2}}c_p^p |\psi_{l,k}(x)|^p + \frac{C_p^p2^{jp/2}}{n^{p/2}}\Bigg).
\end{align*}
where $c_p^p$ is the $p$-th moment of a $\mathcal N(0,1)$, and since Assumption~\ref{ass:wav2} is satisfied.

By Assumption~\ref{ass:wav2}, we know that for any $(l,k)$ we have
\begin{equation*}
 \int |\psi_{l,k}(x)|^{p} dx \leq C_p^p 2^{lp(1/2 - 1/p)},
\end{equation*}
and thus we have that
\begin{align*}
\int \E \Big|\Pi_{V_j} \hat f_n(x) -\Pi_{V_j} f(x)\Big|^p dx  &\leq \tilde  \int_x C_p^p\Bigg(\sum_{-1\leq l\leq j,k} \frac{1}{n^{p/2}}c_p^p |\psi_{l,k}(x)|^p + \frac{C_p^p2^{jp/2}}{n^{p/2}}\Bigg)dx\\
&\leq \tilde  C_p^p\Bigg(\sum_{-1\leq l\leq j,k} \frac{1}{n^{p/2}}c_p^p C_p^p 2^{lp(1/2 - 1/p)}+ \frac{C_p^p2^{jp/2}}{n^{p/2}}\Bigg)\\
&\leq \tilde  C_p^p(c_p^p C_p^p  +C_p^p) \frac{2^{jp/2}}{n^{p/2}}.
\end{align*}

This concludes the proof.

\end{proof}

We finally state the following Proposition (it is a generalisation of what is done in~\citep{gine2012rates}).
\begin{proposition}\label{prop:Borell}
 Let $2\leq p\leq \infty$ and $2\leq h\leq \infty$. Then
\begin{align*}
\Pr \Big\{ \big| \|\Pi_{V_{j}}(\hat f_n - f)\|_{0,p,h}   - \E [ \|\Pi_{V_{j}}(\hat f_n - f)\|_{0,p,h}] \big| \geq 2\sqrt{\log(1/\delta) \frac{ 2^{j(1- 2/p)}}{n}}  \Big\} \leq \delta,
\end{align*}
and in particular, this implies
\begin{align*}
\Pr \Big\{ \big| \|\Pi_{V_{j}}(\hat f_n - f)\|_{p}   - \E [ \|\Pi_{V_{j}}(\hat f_n - f)\|_{p}] \big| \geq 2C_{p/(p-1)}\sqrt{\log(1/\delta) \frac{ 2^{j(1- 2/p)}}{n}}  \Big\} \leq \delta,
\end{align*}
\end{proposition}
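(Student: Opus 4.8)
The plan is to recognise $\|\Pi_{V_j}(\hat f_n - f)\|_{0,p,h}$ as a Lipschitz functional of a standard Gaussian vector and to invoke Borell's Gaussian concentration inequality. Writing $G_{l,k}=\sqrt n\,(\hat a_{l,k}-a_{l,k})$ for the levels $l\le j$, the variables $(G_{l,k})$ are i.i.d.\ $\mathcal N(0,1)$, and $\Pi_{V_j}(\hat f_n-f)$ has wavelet coefficients $G_{l,k}/\sqrt n$. Thus $F(G):=\|\Pi_{V_j}(\hat f_n-f)\|_{0,p,h}$ is a deterministic function of finitely many independent standard Gaussians, and Borell's inequality gives $\Pr(|F(G)-\E F(G)|\ge t)\le 2\exp(-t^2/(2L^2))$ as soon as $F$ is $L$-Lipschitz for the Euclidean metric on $(G_{l,k})$. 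The whole problem therefore reduces to computing $L$; choosing $t=2\sqrt{\log(1/\delta)\,2^{j(1-2/p)}/n}=2L\sqrt{\log(1/\delta)}$ then makes the right-hand side at most $\delta$ once we establish $L=\sqrt{2^{j(1-2/p)}/n}$.

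To bound $L$, I would use that $\|\cdot\|_{0,p,h}$ is a norm, so $|F(G)-F(G')|\le\|\Pi_{V_j}(g_G-g_{G'})\|_{0,p,h}$, where $g_G-g_{G'}$ has coefficients $\eta_{l,k}=(G_{l,k}-G'_{l,k})/\sqrt n$. Then I run a short chain of elementary inequalities: at each level $|\eta_l|_{l_p}\le|\eta_l|_{l_2}$ because $p\ge2$; absorbing the dyadic weights by setting $b_l=2^{l(1/2-1/p)}|\eta_l|_{l_2}$ gives $\|\Pi_{V_j}(g_G-g_{G'})\|_{0,p,h}\le|b|_{l_h}\le|b|_{l_2}$ since $h\ge2$; and finally $|b|_{l_2}^2=\sum_{l\le j}2^{l(1-2/p)}|\eta_l|_{l_2}^2\le 2^{j(1-2/p)}|\eta|_{l_2}^2$, using $1-2/p\ge0$ and $l\le j$. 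Since $|\eta|_{l_2}=|G-G'|_2/\sqrt n$, this yields the Lipschitz constant $L=\sqrt{2^{j(1-2/p)}/n}$ and hence the first display.

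For the $L_p$ statement I would bound the Lipschitz constant of $F_p(G):=\|\Pi_{V_j}(\hat f_n-f)\|_p$ directly by duality. With $q=p/(p-1)$ and $\|g\|_p=\sup_{\|w\|_q\le1}\langle g,w\rangle$, applying Cauchy--Schwarz on the wavelet coefficients shows the Lipschitz constant of $F_p$ is at most $n^{-1/2}\sup_{\|w\|_q\le1}\|\Pi_{V_j}w\|_2$. The wavelet Bernstein (Nikolskii) inequality, which follows from Assumption~\ref{ass:wav2} applied with index $q$ together with the embedding of Proposition~\ref{prop:embeed}, gives $\|\Pi_{V_j}w\|_2\le C_{p/(p-1)}\,2^{j(1/2-1/p)}\|w\|_q$, so that $F_p$ is $C_{p/(p-1)}\sqrt{2^{j(1-2/p)}/n}$-Lipschitz; a second application of Borell's inequality then produces the second display. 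The main obstacle is exactly this computation of the operator norm of the coefficient-to-norm map: for the Besov case one must line up the two sequence-space inclusions $l_p\subset l_2$ and $l_h\subset l_2$ with the dyadic weights, while for the $L_p$ case the passage through duality and the Nikolskii inequality is what is needed to produce precisely the constant $C_{p/(p-1)}$ rather than a cruder bound.
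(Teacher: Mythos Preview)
Your proof is correct and essentially the same as the paper's: both apply Gaussian concentration (Borell's inequality) and reduce the problem to bounding the same operator norm $\sup_{\|w\|_q\le 1}\|\Pi_{V_j}w\|_2$ via the sequence-space inclusions $l_p\hookrightarrow l_2$, $l_h\hookrightarrow l_2$ for $p,h\ge 2$, and the embedding $L_{p/(p-1)}\subset B_{0,p/(p-1),2}$ from Proposition~\ref{prop:embeed}. The only cosmetic difference is that the paper phrases Borell's inequality in its supremum-of-Gaussian-process form (writing the norm as $\sup_{u\in B_0}\langle\Pi_{V_j}(\hat f_n-f),u\rangle$ via Hahn--Banach duality and computing $\sigma^2$), whereas you use the equivalent Lipschitz-function formulation directly; the two computations are literally dual to one another and yield the same constant.
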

\begin{proof}
We first remind Borell's inequality:
\begin{theorem}[Borell's inequality]
Let $(G(t))_{t\in T}$ be a centred Gaussian process indexed by a countable set $T$ such that $\sup_{t \in T} G(t) < +\infty$ almost surely. Then $\E \sup_{t \in T} G(t) < +\infty$ and for every $r \geq 0$, we have
\begin{equation*}
 \Pr \Big(\big|\sup_{t \in T} G(t) - \E \sup_{t \in T} G(t)\big|\geq r \Big) \leq 2 \exp(-r^2/2\sigma^2),
\end{equation*}
where $\sigma^2 = \sup_{t\in T} \E G^2(t) < +\infty$.
\end{theorem}
We use the separability of the ball of radius $1$ of $B_{0,p/(p-1),h/(h-1)}$ (that we write $B_0$ to prove that by Borell's inequality (since $(\Pi_{V_j} (\hat f_n -f) = \sum_{l\leq j,k} (\hat a_{l,k} - a_{l,k}) \psi_{l,k}(x))_{x\in [0,1]}$ is a centered Gaussian process, and thus $(<\Pi_{V_j} (\hat f_n -f), g>)_{g\in B_0}$ is a centred Gaussian process):
\begin{align*}
\Pr \Big\{ \big| \sup_{g \in B_0}<\Pi_{V_j} (\hat f_n -f), g>   - \E [ \sup_{h \in B_0}<\Pi_{V_j} (\hat f_n -f), g> \big| \geq 2\sqrt{\log(1/\delta)} \sigma \Big\} \leq \delta,
\end{align*}
where $\sigma^2 \leq \sup_{g \in B_0}\E [ <\Pi_{V_{j}}(\hat f_n - f), g>^2 ]$.

Note first that by Hahn-Banach's duality Theorem (since $\|.\|_{0,p,h}$ is the norm corresponding to $B_0$), we have that
\begin{equation*}
 \sup_{g \in B_0}<\Pi_{V_j} (\hat f_n -f), g> = \|\Pi_{V_j} (\hat f_n -f)\|_{0,p,h},
\end{equation*}
and we can rewrite the previous equation as
\begin{align*}
\Pr \Big\{ \big| \|\Pi_{V_j} (\hat f_n -f)\|_{0,p,h}   - \E [ \|\Pi_{V_{j}}(\hat f_n - f)\|_{0,p,h}] \big| \geq 2\sqrt{\log(1/\delta)} \sigma \Big\} \leq \delta.
\end{align*}

Concerning $\sigma^2$, we have (since the $(\hat a_{l,k} - a_{l,k})$ are independent centered Gaussian of variance $1/n$)
\begin{align*}
 \sup_{u \in B_0}\E [ <\Pi_{V_{j}}(\hat f_n - f), u>^2] &= \sup_{u \in B_0}\E [ <\sum_{l\leq j,k} (\hat a_{l,k} - a_{l,k}) \psi_{l,k}, u>^2]\\
&= \sup_{u \in B_0}\E \Big[ \big(\sum_{l\leq j,k} (\hat a_{l,k} - a_{l,k}) <\psi_{l,k}, u>\big)^2\Big]\\
&= \sup_{u \in B_0} \sum_{l\leq j,k}\frac{1}{n} \big(<\psi_{l,k}, u>\big)^2\\
&= \sup_{u \in B_0} \frac{1}{n} \|\Pi_{V_j} u\|_2^2.
\end{align*}

We are thus interested in computing $\sup_{u \in B_0} \|\Pi_{V_j} u\|_2^2$, i.e.~the maximum squared $L_2$ norm of a vector of $\|.\|_{0,p/(p-1),h/(h-1)}$ norm of $1$. We have by Plancherel's theorem
\begin{align*}
\sup_{u \in B_0} \|\Pi_{V_j} u\|_2^2 &= \sup_{u \in B_0} \sum_{l\leq j,k} u_{l,k}^2.
\end{align*}
Let us consider $u \in B_0$. We have, since $p \geq 2$ (and $p <\infty$)
\begin{align*}
\sum_{k} u_{l,k}^2 &\leq |u_{l,.}|_{p/(p-1)}^2.
\end{align*}
Also, since $h\geq 2$
\begin{align*}
\sum_{l\leq j} |u_{l,.}|_{p/(p-1)}^2  &= \sum_{l\leq j} |u_{l,.}|_{p/(p-1)}^2 2^{2l(1/2 - (p-1)/p)} 2^{-2l(1/2 - (p-1)/p)}\\
&\leq \Big(\sum_{l\leq j} |u_{l,.}|_{p/(p-1)}^{\frac{h}{h-1}} 2^{l(1/2 - (p-1)/p)\frac{h}{h-1}}\big)^{(h-1)/h} \Big)^2 2^{2j(1/2 - 1/p)}\\
&\leq \|\Pi_{V_j}u\|_{0,p/(p-1), h/(h-1)}^2 2^{j(1 - 2/p)}.
\end{align*}
When putting all this together, we obtain finally
\begin{align*}
\sigma^2 \leq \frac{1}{n}\sup_{u \in B_0} \|\Pi_{V_j} u\|_2^2 \leq \frac{2^{j(1 - 2/p)}}{n},
\end{align*} 
which concludes the proof for the $\|.\|_{0,p,h}$ norm.

For the $\|.\|_p$ norm, we apply as before Borell's inequality on the ball of radius $1$ of $L_{\frac{p}{p-1}}$ (using the fact that it is separable for $p<\infty$, or that $[0,1]$ is separable for $p=\infty$), and also use Hahn Banach's theorem to obtain
\begin{align*}
\Pr \Big\{ \big| \|\Pi_{V_j} (\hat f_n -f)\|_{p}   - \E [ \|\Pi_{V_{j}}(\hat f_n - f)\|_{p}] \big| \geq 2\sqrt{\log(1/\delta)} \sigma \Big\} \leq \delta,
\end{align*}
where $\sigma^2 \leq \sup_{u: \|u\|_{p/(p-1)} \leq 1}\E [ <\Pi_{V_{j}}(\hat f_n - f), u>^2 ]$. Then we remark that $L_{\frac{p}{p-1}} \subset B_{0,p/(p-1),2}$ (see Proposition~\ref{prop:embeed}), which implies that there exists a universal constant $C_{p/(p-1)}$ such that $\|.\|_{\frac{p}{p-1}} \geq C_{p/(p-1)} \|.\|_{0,\frac{p}{p-1},2}$. This implies in particular that $\sup_{u: \|u\|_{p/(p-1)} \leq 1} \|\Pi_{V_j} u\|_2^2 \leq \sup_{u: \|u\|_{0,p/(p-1),2} \leq C_{p/(p-1)}} \|\Pi_{V_j} u\|_2^2$. This in particular implies, using previous results, that $\sigma^2 \frac{C_{p/(p-1)}^2 2^{j(1 - 2/p)}}{n}$, which leads finally to
\begin{align*}
\Pr \Big\{ \big| \|\Pi_{V_{j}}(\hat f_n - f)\|_{p}   - \E [ \|\Pi_{V_{j}}(\hat f_n - f)\|_{p}] \big| \geq 2C_{p/(p-1)} \sqrt{\log(1/\delta) \frac{ 2^{j(1- 2/p)}}{n}}  \Big\} \leq \delta,
\end{align*}
\end{proof}



\section{Adaptive estimation}\label{sec:estimation}



We prove that adaptive estimators exist on sets that are slightly larger than $\Sigma(r, B)$. As a corollary, adaptive estimators exist on $\Sigma(r, B)$ (Theorem~\ref{th:inference}).

\begin{theorem}\label{th:inferenceersatz}
 There exists an adaptive estimator $\hat f_n(dY^{(n)})$ such that there are two constants $u_p>0$ and $N_p>0$ that depend only on $p$ such that for every $B> 0$, every $r > 0$ and every $c_a \geq 0$, we have
\begin{align*}
\sup_{f\in L_2: \|f - \Sigma(r, B)\|_p \leq c_a  n^{-r/(2r+1)}} \E_f \|\hat f_n - f \|_p &\leq u_p  \Big(\big(c_a+B\big)^{1/(2r+1)} + 1\Big) n^{-r/(2r+1)}.
\end{align*}
We can rewrite this as
\begin{align*}
\sup_{r>0}\sup_{f\in L_2: \|f - \Sigma(r, B)\|_p \leq c_a  n^{-r/(2r+1)}} \Big[ \frac{1}{\tilde U_p(B, c_a)} n^{r/(2r+1)} \E_f \|\hat f_n - f \|_p\Big] &\leq 1,
\end{align*}
where $\tilde U_p(B, c_a) = u_p  \Big((c_a+B) + 2\Big)$.
\end{theorem}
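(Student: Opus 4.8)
The plan is to construct $\hat f_n(\hat J_n)$ by Lepski's method over the dyadic resolution levels and to derive an oracle inequality, in the spirit of~\citep{lepski1992problems, gine2009uniform, bull2011adaptive}. Concretely, I would take the candidate levels $J \in \{0,1,\dots,J_{\max}\}$ with $2^{J_{\max}} \asymp n$, use the linear estimates $\hat f_n(J) = \Pi_{V_J}\hat f_n$, and set the stochastic thresholds
\begin{equation*}
\sigma_n(J) = \big(D_p + 2C_{p/(p-1)}\sqrt{\log(1/\delta_n)}\big)\sqrt{2^{J}/n},
\end{equation*}
where $D_p$ is the constant of Proposition~\ref{prop:est} and the second term comes from the concentration bound of Proposition~\ref{prop:Borell}. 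The level $\hat J_n$ would be selected as the smallest $J$ such that $\|\hat f_n(J) - \hat f_n(J')\|_p \le 4\sigma_n(J')$ for every $J' \ge J$. Combining Propositions~\ref{prop:est} and~\ref{prop:Borell} with a union bound over the at most $\log_2 n$ levels shows that, off an event of probability at most $\delta_n \log_2 n$, one has $\|\Pi_{V_J}(\hat f_n - f)\|_p \le \sigma_n(J)$ simultaneously for all $J$; call this good event $\Omega$.

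On $\Omega$ the usual two-step Lepski argument gives the oracle bound. First, for the oracle level $J^\star$ balancing bias and variance (so that $\mathrm{bias}(J^\star)\lesssim\sigma_n(J^\star)$), the selection criterion is satisfied: for $J' \ge J^\star$ the triangle inequality bounds $\|\hat f_n(J^\star)-\hat f_n(J')\|_p$ by $\sigma_n(J^\star)+\mathrm{bias}(J^\star)+\mathrm{bias}(J')+\sigma_n(J') \le 4\sigma_n(J')$, using that the biases are nonincreasing and $\sigma_n$ is increasing, whence $\hat J_n \le J^\star$. Second, applying the criterion to the pair $(\hat J_n, J^\star)$ and then the triangle inequality yields $\|\hat f_n(\hat J_n)-f\|_p \le 4\sigma_n(J^\star)+\sigma_n(J^\star)+\mathrm{bias}(J^\star) \lesssim \sigma_n(J^\star)+\mathrm{bias}(J^\star)$, i.e. the oracle risk $\min_J(\mathrm{bias}(J)+\sigma_n(J))$.

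The bias is where the hypothesis $\|f-\Sigma(r,B)\|_p \le c_a n^{-r/(2r+1)}$ enters. Writing $f=g+h$ with $g\in\Sigma(r,B)$ and $\|h\|_p\le c_a n^{-r/(2r+1)}$, I would bound $\mathrm{bias}(J)=\|f-\Pi_{V_J}f\|_p \le \|g-\Pi_{V_J}g\|_p+\|h-\Pi_{V_J}h\|_p$, controlling the first term by $C_p B\,2^{-Jr}$ via Proposition~\ref{prop:approx} and the second by a constant multiple of $\|h\|_p$ using the $L_p$-stability of the projections $\Pi_{V_J}$ (valid for $1<p<\infty$ under Assumption~\ref{ass:wav1}). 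Taking for $J^\star$ the level that balances the full bias $B\,2^{-Jr}+c_a n^{-r/(2r+1)}$ against the variance proxy $\sigma_n(J)\asymp\sqrt{2^J/n}$ then yields oracle risk of order $(B^{1/(2r+1)}+c_a)n^{-r/(2r+1)}$, which is bounded by $\tilde U_p(B,c_a)n^{-r/(2r+1)}$ with $\tilde U_p(B,c_a)=u_p((c_a+B)+2)$; when $c_a$ is bounded this recovers the sharper factor $(c_a+B)^{1/(2r+1)}+1$ of the statement, the smooth bias and variance both being of order $(B+c_a)^{1/(2r+1)}n^{-r/(2r+1)}$ at the corresponding balancing level. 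Finally, to pass from this high-probability control to the expectation I would bound the error on $\Omega^c$ by Cauchy--Schwarz, using the crude $L_p$-moment bound of Proposition~\ref{prop:est} summed over the $O(\log n)$ levels, and choose $\delta_n$ polynomially small so that $\Pr(\Omega^c)$ times this moment is negligible.

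The main obstacle is the oracle inequality together with the uniform-in-$J$ concentration. The delicate points are making the thresholds $\sigma_n(J)$ sharp enough — exploiting that the Borell deviation $\sqrt{2^{J(1-2/p)}/n}$ of Proposition~\ref{prop:Borell} is of smaller order than the mean proxy $\sqrt{2^J/n}$ of Proposition~\ref{prop:est}, so that no superfluous logarithmic factor pollutes the final rate — and cleanly separating the two sources of bias, so that the $c_a$-perturbation is accounted for at the correct scale rather than corrupting the $B^{1/(2r+1)}$ balancing that produces the minimax constant.
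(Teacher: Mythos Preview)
Your proposal is correct and follows the same Lepski-type construction as the paper (Section~\ref{sec:estimation}): the same bias control (your $f=g+h$ decomposition is precisely the paper's Lemma~\ref{lem:approx}), the same variance ingredients from Propositions~\ref{prop:est} and~\ref{prop:Borell}, and the same key observation that the Borell fluctuation $\sqrt{2^{J(1-2/p)}/n}$ is of lower order than the mean proxy $\sqrt{2^{J}/n}$, so no logarithm survives. The only differences are cosmetic: rather than fixing a global $\delta_n$, the paper takes $\log(1/\delta)=2^{2j/p}$ level by level to get $\Pr(\|\Pi_{V_j}(\hat f_n-f)\|_p\ge D_p'\sqrt{2^j/n})\le \exp(-2^{2j/p})$ (Lemma~\ref{lem:conclp}) and then controls the event $\{\hat j_n>j^\star\}$ by a H\"older-weighted sum over $j>j^\star$ instead of your union-bound-plus-Cauchy--Schwarz; also, your displayed $\sigma_n(J)$ places the factor $\sqrt{\log(1/\delta_n)}$ in front of $\sqrt{2^{J}/n}$ rather than $\sqrt{2^{J(1-2/p)}/n}$, which as written would reintroduce the log --- use the actual Borell scale, as your final paragraph makes clear you intend.
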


\subsection{Approximation and estimation errors of a thresholded estimator}


The wavelet basis we use is the Cohen-Daubechies-Vial wavelet basis (it that satisfies Assumption~\ref{ass:wav2}).

We first remind the following Corollary of Proposition~\ref{prop:est}
\begin{corollary}\label{cor:est}
Consider $f \in L_2$. There exists a universal constant $D_p$ that depends on $p$ only such that for any fixed $j \in \mathbb N^*$ we have
\begin{equation*}
\E\|\Pi_{V_j} \hat f_n -\Pi_{V_j} f\|_p \leq D_p \frac{2^{j/2}}{n^{1/2}} := \sigma(j,n).
\end{equation*}
\end{corollary}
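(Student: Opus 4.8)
The plan is to deduce the first-moment bound of Corollary~\ref{cor:est} directly from the $p$-th moment bound of Proposition~\ref{prop:est} by a single application of Jensen's inequality. First I would observe that $\Pi_{V_j}$ is the projection onto the finite-dimensional span of $\{\phi_k, k\in Z_0\}\cup\{\psi_{l,k}, 0<l\le j, k\in Z_l\}$, so that $X := \|\Pi_{V_j}\hat f_n - \Pi_{V_j} f\|_p$ is a nonnegative, almost surely finite random variable, and in particular $\E[X^p]<\infty$ by Proposition~\ref{prop:est}.

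The key step is to use that the map $t\mapsto t^{1/p}$ is concave on $[0,\infty)$ (since $p\ge 2>1$). Jensen's inequality then gives
\begin{equation*}
\E\|\Pi_{V_j}\hat f_n - \Pi_{V_j} f\|_p = \E\big[(X^p)^{1/p}\big] \le \big(\E[X^p]\big)^{1/p}.
\end{equation*}
Substituting the bound $\E[X^p]\le D_p^p\, 2^{jp/2}/n^{p/2}$ from Proposition~\ref{prop:est} and taking the $p$-th root yields
\begin{equation*}
\E\|\Pi_{V_j}\hat f_n - \Pi_{V_j} f\|_p \le \Big(D_p^p\,\tfrac{2^{jp/2}}{n^{p/2}}\Big)^{1/p} = D_p\,\frac{2^{j/2}}{n^{1/2}},
\end{equation*}
which is exactly the quantity denoted $\sigma(j,n)$. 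Since the exponents match exactly, the same constant $D_p$ can be reused, so no new constant needs to be introduced.

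There is essentially no obstacle in this argument: all of the substantive work, namely the Rosenthal-inequality computation together with Assumption~\ref{ass:wav2} that produces the $2^{jp/2}/n^{p/2}$ control of the $p$-th moment, is already carried out in Proposition~\ref{prop:est}. The corollary is merely the monotonicity of $L^r$-norms of a random variable in $r$ (equivalently, the concavity of $t\mapsto t^{1/p}$), so the only point worth stating explicitly is that $X$ has a finite $p$-th moment, which legitimizes the use of Jensen's inequality.
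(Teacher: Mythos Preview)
Your proposal is correct and takes essentially the same approach as the paper: both deduce the bound from Proposition~\ref{prop:est} via Jensen's inequality (equivalently, convexity of $t\mapsto t^p$), yielding $\E\|\cdot\|_p \le (\E\|\cdot\|_p^p)^{1/p}$. The paper's one-line proof contains a harmless typo (it writes $(\E\|\cdot\|_p)^{1/p}$ where $(\E\|\cdot\|_p)^{p}$ is meant), but the argument is identical to yours.
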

\begin{proof}
Since $p\geq 2$ (and $p <\infty$), we know by convexity that $\E \|.\|_p^p \geq (\E\|.\|_p)^{1/p}$, which concludes the proof together with Proposition~\ref{prop:est}.
\end{proof}

We state the following Lemma, which is an extension of results in~\citep{hardle1998wavelets, gine2012rates}).
\begin{lemma}\label{lem:approx}
Let $\epsilon>0$. Let $f\in L_2$ such that $\|f-\Sigma(r,B)\|_p \leq \epsilon$. There exists a universal constant $C_p$ that depends on $p$ only such that for any fixed $j \in \mathbb N^*$ and any $f \in B_{r,p,\infty}$ such that $\|f\|_{r, p, \infty} \leq B$, we have
\begin{equation*}
\|f -\Pi_{V_j} f\|_p \leq 2\epsilon + C_p B 2^{-jr}  := B(j,f,\epsilon).
\end{equation*}
\end{lemma}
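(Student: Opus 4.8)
The plan is to reduce the claim to the deterministic approximation bound of Proposition~\ref{prop:approx} by replacing the given function $f$ (for which only $\|f-\Sigma(r,B)\|_p\le\epsilon$ is known) with a genuine element of the Besov ball. Since $\|f-\Sigma(r,B)\|_p=\inf_{g\in\Sigma(r,B)}\|f-g\|_p\le\epsilon$, I would first fix $g\in\Sigma(r,B)$ with $\|f-g\|_p\le\epsilon$ (any function within $\epsilon$ of the infimum suffices; if one worries about attainment, take $\|f-g\|_p\le\epsilon+\delta$ and let $\delta\to0$, which does not change the final form). By construction $g\in B_{r,p,\infty}$ with $\|g\|_{r,p,\infty}\le B$, so Proposition~\ref{prop:approx} applies directly to $g$ and gives $\|g-\Pi_{V_j}g\|_p\le C_pB2^{-jr}$, which will furnish the second term of the announced bound.

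It then remains to transfer the estimate from $g$ to $f$. Writing $f-\Pi_{V_j}f=(f-g)+(g-\Pi_{V_j}g)+\Pi_{V_j}(g-f)$ and applying the triangle inequality, I obtain
\[
\|f-\Pi_{V_j}f\|_p\le\|f-g\|_p+\|g-\Pi_{V_j}g\|_p+\|\Pi_{V_j}(g-f)\|_p\le\epsilon+C_pB2^{-jr}+\|\Pi_{V_j}(g-f)\|_p .
\]
Thus the entire statement reduces to controlling the residual term $\|\Pi_{V_j}(g-f)\|_p$ by $\epsilon$, that is, to showing that the level-truncation projection $\Pi_{V_j}$ does not enlarge the $L_p$ norm of $g-f$ by more than a constant factor.

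The main obstacle is precisely this last step. In the case $p=2$ studied in~\citep{bull2011adaptive} the operator $\Pi_{V_j}$ is the $L_2$-orthogonal projection and is automatically a contraction, so $\|\Pi_{V_j}(g-f)\|_2\le\|g-f\|_2\le\epsilon$ and the factor $2$ is exact. For $p\neq2$, however, $\Pi_{V_j}$ is still the $L_2$-projection onto $V_j$ and is not a contraction in $L_p$ a priori; the embeddings of Proposition~\ref{prop:embeed} only give a contraction in the Besov sequence norms $\|\cdot\|_{0,p,h}$ and cannot bridge back to $\|\cdot\|_p$ when $p>2$. I would therefore invoke the standard fact that, for the Cohen--Daubechies--Vial basis, the projections $\Pi_{V_j}$ are bounded on $L_p$ uniformly in $j$ (they are of Calderón--Zygmund / Littlewood--Paley type; see~\citep{hardle1998wavelets,meyer1992wavelets}), which yields $\|\Pi_{V_j}(g-f)\|_p\le c_p\|g-f\|_p\le c_p\epsilon$ for a constant $c_p$ depending on $p$ only. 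Tracking constants, the coefficient of $\epsilon$ is $1+c_p$, a constant depending on $p$ alone; after renaming constants this is written as the stated $2\epsilon$, the value $2$ being the sharp one in the contractive situation. In all cases the final bound takes the announced form $\|f-\Pi_{V_j}f\|_p\le2\epsilon+C_pB2^{-jr}$, which is exactly what is needed as the bias term in the adaptive-estimation argument of Theorem~\ref{th:inferenceersatz}.
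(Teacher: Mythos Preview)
Your proposal is correct and follows exactly the same decomposition as the paper: write $f-\Pi_{V_j}f=(f-g)+(g-\Pi_{V_j}g)+\Pi_{V_j}(g-f)$ for some $g\in\Sigma(r,B)$ with $\|f-g\|_p\le\epsilon$, then invoke Proposition~\ref{prop:approx} on the middle term. You are in fact more scrupulous than the paper on the last step: the paper silently bounds $\|\Pi_{V_j}(f-g)\|_p$ by $\epsilon$, whereas you correctly observe that for $p\neq2$ this requires the (standard) uniform $L_p$-boundedness of the Cohen--Daubechies--Vial projections, yielding $(1+c_p)\epsilon$ rather than exactly $2\epsilon$; this harmless discrepancy is absorbed into the constants downstream.
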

\begin{proof}
We have
\begin{align*}
\|f - \Pi_{V_j} f\|_p &= \inf_{g \in \Sigma(r,B)}\|f-(g - \Pi_{V_j}g)+(g - \Pi_{V_j}g) - \Pi_{V_j} f\|_p \\
&\leq \inf_{g \in \Sigma(r,B)} \Big[\|f-g\|_p + \|g -\Pi_{V_j} g\|_p +  \|\Pi_{V_j} (f-g)\|_p\Big]\\
&\leq 2\epsilon + C_p \sup_{g \in \Sigma(r,B)}\|g\|_{r,p,\infty} 2^{-jr} \leq 2\epsilon + C_p B 2^{-jr}  := B(j,f,\epsilon).
\end{align*}
\end{proof}

\subsection{Definition of a Lepski type estimator}


Let $c_a>0$, $r>0$ and $B>0$. Let $c>0$ and $f\in L_2$ such that $\|f-\Sigma(r,B)\|_p \leq \epsilon$, where 
\begin{equation*}
\epsilon_n \equiv \epsilon = c_a n^{\frac{-r}{2r+1}}.
\end{equation*}

Set for $D_p' = D_p + 2 C_{p/(p-1)}$
\begin{equation*}
\hat j_n = \min\Big\{ j \in  \mathbb N: \|\Pi_{V_j} \hat f_n - \Pi_{V_l} \hat f_n\|_p \leq 4(D_p'+1) \frac{2^{l/2}}{n^{1/2}}, \forall l>j, l \in \mathbb N \Big\}.
\end{equation*}

We consider in the sequel the adaptive Lepski type estimator $\hat f_n(\hat j_n) = \Pi_{V_{\hat j_n}} \hat f_n$.

Set now the oracle threshold
\begin{equation*}
j^* = j^*(f) =  \inf\Big\{ j \in \mathbb N : B(j,f,\epsilon) \geq \sigma(j,n)\Big\}.
\end{equation*}
Note that by Corollary~\ref{cor:est} and Lemma~\ref{lem:approx} we have
\begin{equation}\label{eq:jstar}
B(j^*,f, \epsilon) \leq \sigma(j^*,n) \leq 2\Big(\frac{1}{D_p} (C_pB + c_a)\Big)^{1/(2r+1)} n^{-r/(2r+1)}.
\end{equation}

\subsection{Bound for the error on the event $\{ \hat j_n \leq j^*\}$}

We have by triangular inequality, Equation~\eqref{eq:jstar} and the definitions of $\hat j_n$ and $j^*$ that (since $D_p'\geq D_p$)
\begin{align*}
\E\Big[ \|f_n(\hat j_n) - f\|_p \ind{\hat j_n \leq j^*} \Big] &\leq \E\Big[ \big(\|\hat f_n(\hat j_n) - \hat f_n(j^*)\|_p + \|\hat f_n(j^*) - f\|_p\big) \ind{\hat j_n \leq j^*} \Big]\\
&\leq 4(D_p'+1) \frac{2^{j^*/2}}{n^{1/2}} + \sigma(j^*,n)\\
&\leq 2\Big(\frac{5D_p'+ 4}{D_p} \Big)\big(\frac{1}{D_p} (C_pB + c_a)\big)^{1/(2r+1)}  n^{-r/(2r+1)}.
\end{align*}

\subsection{Bound for the error on the event $\{ \hat j_n > j^*\}$}

We remind the following Lemma (see~\citep{gine2012rates}).
\begin{lemma}\label{lem:conclp}
There is a constant $D_p'$ that depends on $p$ only and such that
\begin{align*}
\sup_{f \in L_p} \Pr \Big\{  \|\Pi_{V_{j}}(\hat f_n - f)\|_{p}   \geq D_p'\sqrt{\frac{ 2^{j}}{n}}  \Big\} \leq 2^{-2^{2j/p}},
\end{align*}
\end{lemma}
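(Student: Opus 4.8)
The plan is to treat $\|\Pi_{V_j}(\hat f_n - f)\|_p$ as a functional of the underlying Gaussian noise, bound its mean, and then apply Gaussian concentration with a deviation probability $\delta$ chosen so that the concentration scale matches the mean scale. The starting observation is that
$$\Pi_{V_j}(\hat f_n - f) = \Pi_{V_j}\hat f_n - \Pi_{V_j} f = \sum_{-1 \leq l \leq j,\, k} \frac{1}{\sqrt{n}} G_{l,k}\,\psi_{l,k},$$
a finite sum of i.i.d.\ standard Gaussians (with the convention $\psi_{-1,k}=\phi_k$ from the proof of Proposition~\ref{prop:est}), which depends only on the noise and not on $f$. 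Consequently every bound below is automatically uniform in $f$, which disposes of the supremum over $f \in L_p$ in the statement.

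First I would bound the mean: by Corollary~\ref{cor:est} (Proposition~\ref{prop:est} combined with Jensen's inequality), $\E\|\Pi_{V_j}(\hat f_n - f)\|_p \leq D_p\sqrt{2^j/n}$. Next I would invoke the Borell-type concentration inequality of Proposition~\ref{prop:Borell}, which gives, for every $\delta \in (0,1)$,
$$\Pr\Big\{\big|\,\|\Pi_{V_j}(\hat f_n - f)\|_p - \E\|\Pi_{V_j}(\hat f_n - f)\|_p\,\big| \geq 2C_{p/(p-1)}\sqrt{\log(1/\delta)\,\frac{2^{j(1-2/p)}}{n}}\Big\} \leq \delta.$$

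The one step that requires any thought is the calibration of $\delta$. I would set $\delta = 2^{-2^{2j/p}}$, so that $\log(1/\delta) = 2^{2j/p}\log 2$. With this choice the exponents inside the deviation term combine exactly, $2^{2j/p}\cdot 2^{j(1-2/p)} = 2^{j}$, so that term collapses to $2C_{p/(p-1)}\sqrt{\log 2}\,\sqrt{2^j/n}$, i.e.\ the same order $\sqrt{2^j/n}$ as the mean. This is the crux: the $L_p$ concentration scale $2^{j(1-2/p)}$ is strictly smaller than the mean scale $2^j$ for $p>2$, and feeding in a doubly-exponentially small $\delta$ is precisely what absorbs the discrepancy.

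Putting the two displays together, outside an event of probability at most $\delta = 2^{-2^{2j/p}}$ we get
$$\|\Pi_{V_j}(\hat f_n - f)\|_p \leq \big(D_p + 2C_{p/(p-1)}\sqrt{\log 2}\big)\sqrt{\frac{2^j}{n}} \leq D_p'\sqrt{\frac{2^j}{n}},$$
with $D_p' = D_p + 2C_{p/(p-1)}$ (using $\sqrt{\log 2}<1$), a constant depending only on $p$. Since nothing used the particular $f$, this is exactly the claimed uniform bound. I do not expect a genuine obstacle: the entire substance is the choice of $\delta$ above, and everything else is bookkeeping of the $p$-dependent constants.
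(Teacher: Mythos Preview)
Your proof is correct and follows essentially the same approach as the paper: bound the mean via Corollary~\ref{cor:est}, apply the Borell-type inequality of Proposition~\ref{prop:Borell}, and calibrate $\delta$ so that $\log(1/\delta)\cdot 2^{j(1-2/p)} = 2^j$. The only cosmetic difference is that the paper sets $\log(1/\delta)=2^{2j/p}$ (hence $\delta=\exp(-2^{2j/p})\leq 2^{-2^{2j/p}}$) rather than your direct choice $\delta=2^{-2^{2j/p}}$, which is immaterial.
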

\begin{proof}
Proposition~\ref{prop:Borell} gives us that for the $\|.\|_p$ norm, we have
\begin{align*}
\Pr \Big\{ \big| \|\Pi_{V_{j}}(\hat f_n - f)\|_{p}   - \E [ \|\Pi_{V_{j}}(\hat f_n - f)\|_{p}\big| \geq 2C_{p/(p-1)}\sqrt{\log(1/\delta) \frac{ 2^{j(1- 2/p)}}{n}}  \Big\} \leq \delta.
\end{align*}
By combining this with Corollary~\ref{cor:est}, we get
\begin{align*}
\Pr \Big\{  \|\Pi_{V_{j}}(\hat f_n - f)\|_{p}   \geq D_p\sqrt{\frac{ 2^{j}}{n}}  + 2C_{p/(p-1)}\sqrt{\log(1/\delta) \frac{ 2^{j(1- 2/p)}}{n}}  \Big\} \leq \delta,
\end{align*}
which implies by considering $\delta$ such that $\log(1/\delta)= 2^{2j/p}$
\begin{align*}
\Pr \Big\{  \|\Pi_{V_{j}}(\hat f_n - f)\|_{p}   \geq (D_p+2C_{p/(p-1)})\sqrt{\frac{ 2^{j}}{n}}  \Big\} \leq \exp(-2^{2j/p}).
\end{align*}
\end{proof}

We have by H\"older's inequality that
\begin{align*}
\E\Big[ \|f_n(\hat j_n) - f\|_p \ind{\hat j_n > j^*} \Big] &= \sum_{j>j^*} \E\Big[ \|f_n(j) - f\|_p \ind{\hat j_n = j} \Big] \\
&\leq \sum_{j>j^*} \big( \E \|f_n(j) - f\|_p^p\big)^{1/p} \big(\E \ind{\hat j_n = j}^{p/(p-1)} \big)^{(p-1)/p},
\end{align*}
which implies by Corollary~\ref{cor:est} that we have
\begin{align}\label{eq:probabili}
\E\Big[ \|f_n(\hat j_n) - f\|_p \ind{\hat j_n > j^*} \Big] &\leq \sum_{j>j^*}\sigma(j,n) \Prob(\hat j_n = j)^{(p-1)/p},
\end{align}

By an union bound and by definition of $\hat j_n$, we remark that
\begin{equation}\label{eq:probabili2}
\Prob(\hat j_n = j) \leq \sum_{l \geq j} \Prob\Big( \|\hat f_n(j-1) - \hat f_n(l)\|_p \geq  4(D_p'+1) \frac{2^{l/2}}{n^{1/2}} = \frac{4(D_p'+1)}{D_p} \sigma(l,n) \Big),
\end{equation}
and by triangle inequality we have
\begin{align*}
&\|\hat f_n(j-1) - \hat f_n(l)\|_p\\ 
&= \|\hat f_n(j-1) - \hat f_n(l) - \Pi_{V_{j-1}} f + \Pi_{V_{j-1}} f + \Pi_{V_l} f - \Pi_{V_l} f -f +f\|_p\\
&\leq \|\hat f_n(j-1) - \Pi_{V_{j-1}} f\|_p + \|\hat f_n(l) - \Pi_{V_l} f\|_p + \|\Pi_{V_{j-1}} f - f\|_p +\| \Pi_{V_l} f -f\|_p\\
&\leq \|\hat f_n(j-1) - \Pi_{V_j} f\|_p + \|\hat f_n(l) - \Pi_{V_l} f\|_p + 2 \sigma(l,n),
\end{align*}
since as $l >j-1 \geq j^*$, we have $\|\Pi_{V_{j-1}} f - f\|_p +\| \Pi_{V_l} f -f\|_p \leq B(j-1,f,\epsilon) + B(l,f,\epsilon) \leq 2 B(j^*,f,\epsilon) \leq 2 \sigma(j^*,n) \leq 2 \sigma(l,n)$ by Lemma~\ref{lem:approx}. 
This implies that (since $D_p' \geq D_p$ by definition)
\begin{align*}
&\Prob\Big( \|\hat f_n(j-1) - \hat f_n(l)\|_p \geq \frac{4(D_p'+1)}{D_p} \sigma(l,n) \Big)\\
 &\leq \Prob\Big( \|\hat f_n(j-1) - \Pi_{V_{j-1}} f\|_p  \geq \big(\frac{4(D_p'+1)}{2 D_p} - 1\big) \sigma(l,n) \Big)\\
 &+ \Prob\Big( \|\hat f_n(l) - \Pi_{V_l} f\|_p  \geq \big(\frac{4(D_p'+1)}{2 D_p} - 1\big) \sigma(l,n) \Big)\\
&\leq \Prob\Big( \|\hat f_n(j-1) - \Pi_{V_{j-1}} f\|_p  \geq \big(D_p'+2)\big) \sigma(l,n) \Big)\\
 &+ \Prob\Big( \|\hat f_n(l) - \Pi_{V_l} f\|_p  \geq \big(D_p' + 2\big) \sigma(l,n) \Big)
\end{align*}
so we obtain by Lemma~\ref{lem:conclp} that
\begin{equation*}
\Prob\Big( \|\hat f_n(j-1) - \hat f_n(l)\|_p \geq \frac{4(D_p'+1)}{D_p} \sigma(l,n) \Big) \leq 2\times \exp(-2^{2l/p}),
\end{equation*}
which implies when combined with Equations~\ref{eq:probabili} and~\ref{eq:probabili2}
\begin{align*}
\E\Big[ \|\hat f_n(\hat j_n) - f\|_p \ind{\hat j_n > j^*} \Big] &\leq \sum_{j>j^*} D_p' \sqrt{\frac{2^j}{n}} \sum_{l\geq j}\big(2 \times \exp(-2^{2l/p}) \big)^{(p-1)/p}\\
&\leq \sum_{j>j^*} 8 D_p' \sqrt{\frac{2^j}{n}} \exp(-2^{2j/p}(p-1)/p)\\
&\leq \sum_{j>j^*} 16 D_p' \frac{\exp(-2^{2j/p}(p-1)/2p)}{\sqrt{n}}\\
&\leq 16 D_p' \frac{\exp(-2^{2j^*/p}(p-1)/2p)}{\sqrt{n}}\\
&\leq n^{-r/(2r+1)}.
\end{align*}
for $n$ (and thus $j^*$) large enough (but depending only on $p$, i.e.~$n \geq N_p$).

\subsection{Conclusion}

By combining the results of the two precedent Subsections, we have
\begin{align*}
\E\Big[ \|f_n(\hat j_n) - f\|_p \Big] &\leq  \E\Big[ \|f_n(\hat j_n) - f\|_p \ind{\hat j_n \leq j^*} \Big] +\E\Big[ \|f_n(\hat j_n) - f\|_p \ind{\hat j_n > j^*} \Big]\\
&\leq \Bigg( 2\Big(\frac{5D_p'+ 4}{D_p} \Big)\big(\frac{1}{D_p} (C_pB + c_a)\big)^{1/(2r+1)} + 1\Bigg) n^{-r/(2r+1)}.
\end{align*}
which concludes the proof since all the constants in the bound depend only on $p$.

\section{Extension of Theorem~\ref{th:ssmallseg1} to the entire segment $[t,s]$}\label{sec:adaptiv2}

We now state the analogue of Theorem~\ref{th:bull2}, i.e.~for the whole segment $I=[t,s]$ (still when $s(1-1/p) \leq t$). The proof of this Theorem is more technical than the proof of Theorem~\ref{th:ssmallseg1}, but it is based on similar ideas.

Consider now the case where $s(1-1/p) \leq t$. In this case, full adaptation is possible without constraining the model $\mathcal P_n$ to be a strict subset of $\Sigma(t,B)$. We provide the following result, related to the case $s \leq 2t$ in~\citep{bull2011adaptive}.
\begin{theorem}\label{th:ssmallseg}
Let $1/2\leq t< s$. Assume also that $s(1-1/p) \leq t$. Let $\mathcal P_n = \Sigma(t,B)$ and $I=[t,s]$. Let $B>0$ and $\alpha>0$. There exists a $(L_p,\alpha)-$honest and adaptive confidence set given $\mathcal P_n$, $I$ and $B$.
\end{theorem}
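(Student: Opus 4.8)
The plan is to mirror the two-point construction of the proof of Theorem~\ref{th:ssmallseg1} along a grid of smoothness levels, and to aggregate the associated tests into a data-driven smoothness index. First I would fix a grid $t = r_0 < r_1 < \cdots < r_M = s$ of $[t,s]$ whose spacing is of order $1/\log n$, fine enough that consecutive rates obey $n^{-r_i/(2r_i+1)} \leq K_0\, n^{-r_{i+1}/(2r_{i+1}+1)}$ for a constant $K_0=K_0(s,t)$; this forces $M \asymp \log n$. For each $i$ I would invoke Theorem~\ref{th:test} with the pair $(r_i,t)$ in place of $(s,t)$ to obtain a test $\Psi_n^{(i)}$, consistent in the sense of~\eqref{eq:consis2} at level $\Delta$ for the separation $\rho_n^{(i)} = C(B+1)\max(n^{-t/(2t+1-1/p)}, n^{-r_i/(2r_i+1)})$. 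The decisive structural consequence of the hypothesis $s(1-1/p)\le t$ is that every $r_i\in[t,s]$ satisfies $r_i(1-1/p)\le s(1-1/p)\le t\le r_i$, so each $\rho_n^{(i)}$ collapses to the smooth rate $C(B+1)\,n^{-r_i/(2r_i+1)}$, exactly the regime of Theorem~\ref{th:ssmallseg1}. I then set $\hat r = \max\{r_i : \Psi_n^{(i)}=0\}$ (well defined since the level $r_0=t$ has empty alternative and is always accepted) and define, around the adaptive estimate $\tilde f_n$ of Theorem~\ref{th:inferenceersatz}, the set $C_n = \{g\in\Sigma(t,B): \|\tilde f_n - g\|_p \le \tfrac{D}{\alpha}\,n^{-\hat r/(2\hat r+1)}\}$ with $D$ large.

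For the adaptivity (small diameter) statement I would exploit that a \emph{single} test controls $\hat r$ from below. If $f\in\Sigma(r,B)$ and $r_i\le r$, then by the nesting $\Sigma(r,B)\subset\Sigma(r_i,B)$ the event $\{\hat r\ge r_i\}$ contains $\{\Psi_n^{(i)}=0\}$, whose complement has probability at most $\Delta$ by the Type~I control of $\Psi_n^{(i)}$. Taking $r_{i^*}$ the largest grid point below $r$ gives ${\Pr}_f(\hat r<r_{i^*})\le\Delta$, and on the complementary event the grid fineness yields $|C_n|\le \tfrac{2D K_0}{\alpha}\,n^{-r/(2r+1)}$. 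No union bound is needed in this direction, so the diameter bound holds at the clean rate with level $\Delta\le\alpha$.

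For honesty (coverage) I would decompose over the value of $\hat r$, which partitions the sample space into the disjoint events $\{\hat r=r_i\}$. On $\{\hat r=r_i\}$ the fixed $f$ falls into one of two cases. Either $\|f-\Sigma(r_i,B)\|_p<\rho_n^{(i)}$, in which case Theorem~\ref{th:inferenceersatz} applied with $c_a=C(B+1)$ and smoothness $r_i$ gives $\E_f\|\tilde f_n-f\|_p\le \tilde U_p(B,C(B+1))\,n^{-r_i/(2r_i+1)}$, so by Markov $f$ is missed with probability $\le \alpha\tilde U_p/D$; or $f\in\tilde\Sigma(t,B,\rho_n^{(i)})$, in which case $\{\hat r=r_i\}\subset\{\Psi_n^{(i)}=0\}$ has probability $\le\Delta$ by the Type~II control. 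Summing the disjoint contributions over the $M+1$ levels bounds ${\Pr}_f(f\notin C_n)$ by $(M+1)\max(\Delta,\alpha\tilde U_p/D)$. This is precisely the mechanism highlighted in the Discussion for the two-point case: mis-classifying an $f$ lying within $\rho_n^{(i)}$ of $\Sigma(r_i,B)$ into the smoother class is harmless, because the adaptive estimator still attains $n^{-r_i/(2r_i+1)}$ on that enlarged neighbourhood, so the full model $\mathcal P_n=\Sigma(t,B)$ can be retained.

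The main obstacle is exactly the union bound in this coverage step over the $M\asymp\log n$ grid levels. Naively it forces $\Delta\asymp\alpha/\log n$ and $D\asymp\log n$; but $\Delta$ enters the thresholds, hence $C$, $\rho_n^{(i)}$ and ultimately $D$, through the Chebyshev bound of Lemma~\ref{lem:var}, whose dependence on $\Delta$ is the \emph{polynomial} $\Delta^{-1/2}$, which would inflate the diameter by a spurious $\sqrt{\log n}$ factor and destroy the rate (the term $\tilde T_n$ is already fine, since Lemma~\ref{lem:conc0pp} gives it a $\sqrt{\log(1/\Delta)}$ dependence through Borell's inequality, just as the variance term of $\tilde f_n$ enjoys the summable super-exponential tails of Lemma~\ref{lem:conclp}). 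The technical heart of the proof is therefore to replace the Chebyshev control of $T_n(l)$ by an exponential, Bernstein-type deviation bound: since $T_n(l)=\sum_{k\in Z_l}2^{lp(1/2-1/p)}\hat F_p^p(l,k)$ is a sum of independent summands whose moments are already controlled in Lemmas~\ref{lem:mean} and~\ref{lem:var}, one obtains tails in $\sqrt{\log(1/\Delta)}$, and, crucially, the level-dependent growth in $l$ of the thresholds $t_n(l)$ makes the per-level failure probabilities decay geometrically in $l$. The union bound over the resolution levels then costs only a fixed constant, so $\Delta$ and $D$ can be taken $n$-independent and both the coverage and the diameter bounds are kept below $\alpha$ at the clean rate, with $L=L(s,t,p,B,\alpha)$. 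The remaining steps are routine and parallel~\citep{bull2011adaptive}.
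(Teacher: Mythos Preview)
Your grid-and-union-bound strategy is a natural first attempt, but the obstacle you identify is real and your proposed fix does not close it. Even if you succeed in upgrading the Chebyshev step in Lemma~\ref{lem:var} to a Bernstein-type bound with $\sqrt{\log(1/\Delta)}$ dependence, the union you need is over the $M\asymp\log n$ \emph{grid} levels $r_i$, not over the resolution levels $l$; the geometric decay in $l$ that you invoke is already built into the construction of the $\delta_l$'s in~\eqref{eq:cheby2} and is irrelevant to the $i$-union. With $\Delta\asymp\alpha/\log n$ forced by the $i$-union, Bernstein gives thresholds (hence $C$, hence $\rho_n^{(i)}$, hence $c_a$ in Theorem~\ref{th:inferenceersatz}, hence $D$) of order $\sqrt{\log\log n}$, so the diameter constant $L$ depends on $n$, in violation of Definition~\ref{def:adapthonest}. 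One could try to argue that for $i>i^*$ the separation $\|f-\Sigma(r_i,B)\|_p/\rho_n^{(i)}$ grows with $i-i^*$ and hence the type-II errors decay geometrically in $i-i^*$, but that is a separate quantitative statement you have not established, and it does not follow from ``the level-dependent growth in $l$''.

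The paper avoids the union bound altogether by a structural observation you are missing: the test process $r\mapsto\Psi_n(r)$ built from the statistics of Subsection~\ref{ss:stats2bis} can be made \emph{monotone increasing} in $r$ (Lemma~\ref{incrifunc}), after tuning the constants $E_1,E_2$. With monotonicity, $\hat r=\inf\{r:\Psi_n(r)=1\}$ satisfies $\{\hat r<r_f-\epsilon\}\subset\{\Psi_n(r_f-\epsilon)=1\}$ and $\{\hat r>r_f^++\epsilon\}\subset\{\Psi_n(r_f^++\epsilon)=0\}$, so the event $\{\hat r\in[r_f-\epsilon,r_f^++\epsilon]\}$ is controlled by exactly \emph{two} test errors, one type~I and one type~II, at a fixed level $\alpha$; no grid and no $n$-dependent constants are needed. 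The work then goes into proving monotonicity, which is a careful but elementary chaining argument on the thresholds $t_n(l,r)$ and $\tilde t_n(r)$. This is the missing idea in your proposal.
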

\begin{proof}

Assume that $s(1-1/p) \leq t \leq s$ and let $B>0$. Let $n>\max(N_p, \exp(2/t))$, where $N_p$ defined as in Theorem~\ref{th:inferenceersatz}.

Let $r \in [t,s]$. We define the following sets (similar to the sets defined in Equation~\eqref{eq:tildesets}, but separated from $\Sigma(r,B)$)
\begin{equation*}
\tilde \Sigma(t,r,B, \rho_n(r)) = \Sigma(t,B) \setminus \{g \in \Sigma(t,B): \|g - \Sigma(r,B)\|_{p} \leq \rho_n(r)\}.
\end{equation*}
where $\rho_n(r) \geq 0$. These sets are empty as $r \rightarrow t$, or when $\rho_n(r)$ is large, but are nevertheless defined.

Let $\alpha >0$. Let us write, for $r \in [t,s]$, $\Psi_n(r)$ for the test described in Subsection~\ref{ss:stats2bis}, where the associated constants $E_1$ and $E_2$ are chosen large enough (depending only on $p,B,\alpha$) so that Lemma~\ref{incrifunc} holds. Set also
\begin{equation*}
\rho_n(r)  = 2 C(B+1) n^{-\frac{r}{2r+1}}  \geq C(B+1) (n^{-\frac{r}{2r+1}} + n^{-\frac{t}{2t+1 - 1/p}}),
\end{equation*}
for $C$ as in Theorem~\ref{th:sbig} (depending only on $\alpha$, $p$).

\subsection{Step 1: Study of the process $(\Psi_n(r))_{r\in [t,s]}$}

A first remark is that for any $r\in [t,s]$, the test $\Psi_n(r)$ is a measurable random variable from $(\mathcal C[0,1], \mathcal B(\mathcal C[0,1]))$ to $(\{0,1\}, \big\{\{0\}, \{1\}, \{0,1\}, \emptyset\big\})$ where $\mathcal C[0,1])$ is the set of continuous functions from $[0,1]$ to $\mathbb R$, and $\mathcal B(.)$ is the associated Borel set.



\begin{lemma}\label{incrifunc}
Consider the test $\Psi_n(r)$ described in Subsection~\ref{ss:stats2bis}. Assume that the associated constants $E_1$ and $E_2$ are large enough (depending only on $p,B,\alpha$). The trajectories $r\in [t,s] \rightarrow \Psi_n(r)$ of the process $(\Psi_n(r))_{r \in [t,s]}$ are monotonously increasing, and caglad (left continuous right limit).
\end{lemma}
\begin{proof}
Consider the tests $\Psi_n(r)$ described in Subsection~\ref{ss:stats2bis}. Since $\Psi_n(r)$ is either $1$ or $0$, increasing monotonicity is equivalent to $\forall (r_1, r_2), t \leq r_1 \leq r_2 \leq s$, $\Psi_n(r_2) = 0 \Rightarrow  \Psi_n(r_1) = 0$.

The tests $\Psi_n(r)$ involve the statistics $T_n(l)$ (similar for any $r$), the statistics
\begin{equation*}
\tilde T_n(r)= \inf_{g \in \Sigma(r,B)}\|\Pi_{V_{j_r}} \hat f_n - g\|_{p},
\end{equation*}
where $j_r = \lfloor \log(n^{1/(2r+1)})\rfloor$ is a decreasing function of $r$, the thresholds $(t_n(l,r))_{l}$ that are decreasing functions of $r$, and the threshold $\tilde t_n(r)$ that is a decreasing function of $r$. See Subsection~\ref{ss:stats2bis} for a more complete definition of all these quantities. The test is defined as
\begin{equation*}
 \Psi_n(r)  = 1 -  \ind{\tilde T_n(r) \leq \tilde t_n(r)} \prod_{j_r \leq l\leq j} \ind{T_n(l)\leq(t_n(l,r))^{p}} .
\end{equation*}
Let $t\leq r_1 \leq r_2\leq s$. Assume that $\Psi_n(r_2) = 0$, i.e.~that 
\begin{equation}\label{condi1}
\tilde T_n(r_2) \leq \tilde t_n(r_2),
\end{equation}
and
\begin{equation}\label{condi2}
\forall l\in \mathbb N: j \geq l \geq j_{r_2}, T_n(l)\leq (t_n(l,r_2))^{p}.
\end{equation}

Since $j_{r_1} \geq j_{r_2}$,  and $\forall l, t_n(l,r_1) \geq t_n(l,r_2)$, we know by Equation~\eqref{condi2} that
\begin{equation}\label{condi2.2}
\forall l\in \mathbb N: j \geq l \geq j_{r_1}, T_n(l)\leq (t_n(l,r_1))^{p}.
\end{equation}

If $j_{r_{1}} = j_{r_{2}}$, then $\ind{\tilde T_n(r_1)\leq \tilde t_n(r_1)} = \ind{\tilde T_n(r_2)\leq \tilde t_n(r_2)}=1$. Otherwise, it implies that $j_{r_{1}} \geq 1+ j_{r_{2}}$, and by triangular inequality
\begin{align}
\tilde T_n(r_1) &= \inf_{g \in \Sigma(r_1,B)} \|\Pi_{V_{j_{r_1}}}\hat f_n - g \|_{p} \leq  \inf_{g \in \Sigma(r_2,B)} \|\Pi_{V_{j_{r_1}}}\hat f_n - g \|_{p} \nonumber\\
&\leq  \inf_{g \in \Sigma(r_2,B)} \|\Pi_{V_{j_{r_2}}}\hat f_n - g \|_{p}  +  \sum_{l=j_{r_2}}^{j_{r_1}}\|\Pi_{W_l}\hat f_n \|_{p} \nonumber\\
&\leq \tilde T_n(r_2) + C_p\sum_{l=j_{r_2}}^{j_{r_1}}\|\Pi_{W_l}\hat f_n \|_{0,p,\infty} \nonumber\\
&\leq \tilde T_n(r_2) + C_pE' \sum_{l=j_{r_2}}^{j_{r_1}}(\frac{2^{l/2}}{\sqrt{n}} + \big(\max(T_n(l),0)\big)^{1/p}), \label{eq:gygy}
\end{align}
for some $E'>0$ large enough but depending only on $p$ (see the proof of Lemma~\ref{lem:mean} for the argument on why $\|\Pi_{W_l}\hat f_n \|_{0,p,\infty}^p \leq E''(\frac{2^{lp/2}}{n^{p/2}} + \max(T_n(l),0))$). Since the constants $E_1$ and $E_2$ defined in Subsection~\ref{ss:stats2bis} can be chosen arbitrarily large, and since $s(1-1/p) \leq t$ (which implies that $\tilde t_n(r) \equiv \sum_{l=j_{r}}^{j} t_n(l,r) \equiv n^{-\frac{r}{2r+1}}$) , we can choose $E_1$ and $E_2$ such that 
\begin{equation*}
\tilde t_n(r) =E_2 \sqrt{\frac{2^{j_r}}{n}} \geq E \sum_{l=j_{r}}^{j} t_n(l,r)
\end{equation*}
for some arbitrarily large $E_2>0$, and some arbitrarily large $E>E_2$ (by choosing $E_2/E_1$ large enough). Using this together with Equation~\eqref{eq:gygy}, and the fact that $j_{r_1} \geq j_{r_2}+1$, one obtains by Equations~\eqref{condi1} and~\eqref{condi2}
\begin{align*}
\tilde T_n(r_1) &\leq \tilde t_n(r_2) + C_p E' \sum_{l=j_{r_2}}^{j_{r_1}}(\frac{2^{l/2}}{\sqrt{n}} + t_n(l,r_2))\\
&\leq 4C_p E' \sqrt{\frac{2^{j_{r_1}}}{n}} + (E_2 + C_pE'E_2/E) \sqrt{\frac{2^{j_{r_2}}}{n}} \\
&\leq (E_2/\sqrt{2} + C_pE'E_2/(\sqrt{2}E) + 4C_p E')  \sqrt{\frac{2^{j_{r_1}}}{n}} \\
&\leq E_2  \sqrt{\frac{2^{j_{r_1}}}{n}} = \tilde t_n(r_1),
\end{align*}
for $E_2$ and $E/E_2$ large enough. This implies together with Equation~\eqref{condi2.2} that
\begin{equation*}
 \Psi_n(r_1)  = 1 -  \ind{\tilde T_n(r_1) \leq \tilde t_n(r_1)} \prod_{j_{r_1} \leq l\leq j} \ind{T_n(l)\leq(t_n(l,r_1))^{p}}  = 0.
\end{equation*}
This concludes the proof of increasing monotonicity.

The trajectories $r \rightarrow \Psi_n(r)$ are increasing in $\{0,1\}$. They are thus either caglad, or cadlag. By definition of the test, the sets $\{r\in [t,s]: \Psi_n(r) = 1\}$ are closed subsets of $[t,s]$. The trajectories are thus caglad.
\end{proof}

Lemma~\ref{incrifunc}, together with the fact that $\Psi_n(r)$ is measurable for any $r$, implies that the process $(\Psi_n(r))_{r \in [t,s]}$ is progressively measurable.

\subsection{Step 2: Estimation of the Besov exponent}


Consider $f\in \Sigma(t,B)$. As stated in Lemma~\ref{incrifunc}, the trajectories $r \rightarrow \Psi_n(r)$ are increasing functions. More precisely, their value is $0$ until some value $\hat r$ defined as
\begin{equation*}
 \hat r = \inf \Big\{r\in [t,s]: \Psi_n(r) = 1 \Big\},
\end{equation*}
and then $1$ for $r$ large enough. $\hat r \in [t,s]$ is well defined since the trajectories are bounded by $1$, and measurable since it is a stopping time on the progressively measurable process $(\Psi_n(r))_{r \in [t,s]}$ with caglad trajectories. Note also that, since the trajectories $\Psi_n(r)$ are of the form $x\in [t,s] \rightarrow \mathbf 1 \{x > c \}$
\begin{equation*}
 \hat r = \inf \Big\{r\in [t,s]: \Psi_n(r) = 1 \Big\} = \sup \Big\{r\in [t,s]: \Psi_n(r) = 0 \Big\}.
\end{equation*}
Consider the confidence set around $\hat f_n(\hat j_n)$, which is the adaptive estimate considered in Theorem~\ref{th:inference}, as being 
\begin{equation*}
C_n = \Big\{g: \|\hat f_n(\hat j_n) - g\|_p \leq  \frac{1}{\alpha}U_p' n^{-\frac{\hat r}{2\hat r+1}} \Big\},
\end{equation*}
where $U_p' = \tilde U_p(B,2C(B+1))$ is defined as in Theorem~\ref{th:inferenceersatz}. Note that $U_p'$ depends only on $B$ and $C$, and thus only on $B, \alpha, p$.

Write
\begin{equation*}
 r_f = \sup \Big\{r\in [t,s]: \|f - \Sigma(r, B)\|_p = 0 \Big\} = \sup \Big\{r\in [t,s]: f \in \Sigma(r, B) \Big\},
\end{equation*}
for the Besov exponent of $f$, and
\begin{equation*}
 r_f^+ \equiv r_f^+(n) = \sup \Big\{r\in [t,s]: \|f - \Sigma(r, B)\|_p \leq \rho_n(r) \Big\}.
\end{equation*}
Note that $r_f^+ $ exists since $\|f - \Sigma(t, B)\|_p=0$.

Since $r \in [t,s] \rightarrow \|f - \Sigma(r, B)\|_p$ is a monotonously increasing function in $r$, we know that $\forall \epsilon>0$, 
\begin{equation}\label{eq:cac}
f \in \Sigma(r_f-\epsilon,B).
\end{equation}
Also,  for the same reason and since $\rho_n(r)$ is a decreasing function of $r$,
\begin{equation}\label{eq:cac2}
\|f - \Sigma(r_f^++\epsilon ,B)\|_2 \geq \rho_n(r_f^+) \geq \rho_n( r_f^+ + \epsilon),
\end{equation}
and
\begin{equation}\label{eq:cac3}
\|f - \Sigma(r_f^+-\epsilon ,B)\|_2 \leq \rho_n(r_f^+) \leq \rho_n( r_f^+ -  \epsilon),
\end{equation}
where by convention, $\|f - \emptyset\|_2 = \|f\|_2$. We set 
\begin{equation*}
\epsilon \equiv \epsilon_n = 1/\log(n).
\end{equation*}

Since $f \in \Sigma(r_f-\epsilon ,B)$ (Equation~\eqref{eq:cac}), we have in particular by Equation~\eqref{eq:consis} that for the $n$ we fixed
\begin{equation}\label{eq:highi}
{\Pr}_f(\Psi_n(r_f - \epsilon) = 0)\geq \inf_{g \in \Sigma(r_f-\epsilon, B)}{\Pr}_g(\Psi_n(r_f - \epsilon) = 0) \geq 1-\alpha.
\end{equation}
Since $\|f - \Sigma(r_f^+ + \epsilon ,B)\|_2 \geq \rho_n(r_f^+ + \epsilon )$ (Equation~\eqref{eq:cac2}), and since thus $f \in \tilde \Sigma(t, r_f^+ + \epsilon ,B, \rho_n(r_f^+ + \epsilon ))$, we have in particular by Equation~\eqref{eq:consis} that for the $n$ we fixed
\begin{equation}\label{eq:lowi}
{\Pr}_f(\Psi_n(r_f^+ + \epsilon) = 1) \geq \inf_{g \in \tilde \Sigma(t, r_f^+ + \epsilon ,B, \rho_n(r_f^+ + \epsilon ))}{\Pr}_g(\Psi_n(r_f^+ + \epsilon) = 1) \geq 1-\alpha.
\end{equation}
By combining Equations~\eqref{eq:lowi} and~\eqref{eq:highi}, and since $r \rightarrow \Psi_n(r)$ is an increasing function (Lemma~\ref{incrifunc}), we know that
\begin{equation*}
{\Pr}_f ( \hat r \in [r_f - \epsilon , r_f^+ + \epsilon]) \geq 1-2\alpha.
\end{equation*}

\subsection{Step 3: Bound on the diameter of the confidence set}

The bound of last Equation holds for any $f \in \Sigma(t,B)$ for the $n$ we fixed, and thus by just considering the infimum over $\Sigma(t,B)$, we have
\begin{equation*}
\inf_{f \in \Sigma(t,B)}{\Pr}_f ( \hat r \in [r_f - \epsilon , r_f^+ + \epsilon ]) \geq 1-2\alpha.
\end{equation*}
We thus have by definition of $C_n$ that
\begin{equation*}
\sup_{f \in \Sigma(t,B)}{\Pr}_{f}\Big(|C_n| > \frac{1}{\alpha}U_p' n^{-\frac{r_f - \epsilon}{2(r_f- \epsilon)+1}}\Big)  \leq 1-  \inf_{f \in \Sigma(t,B)} {\Pr}_f ( \hat r \in [r_f - \epsilon, r_f^+  + \epsilon])  \leq 2\alpha,
\end{equation*}
and since $\epsilon = 1/\log(n)$, this implies for the $n$ we fixed
\begin{equation*}
\sup_{f \in \Sigma(t,B)}{\Pr}_{f}\Big(|C_n| > \frac{1}{\alpha}U_p' \exp(1) n^{-\frac{r_f}{2r_f+1}}\Big)  \leq 1-  \inf_{f \in \Sigma(t,B)} {\Pr}_f ( \hat r \in [r_f - \epsilon, r_f^+  + \epsilon])  \leq 2\alpha,
\end{equation*}
since $\exp(1/(2(r_f - \epsilon) + 1)) \leq \exp(1)$. Note now that
\begin{align*}
\sup_{f \in \Sigma(t,B)}{\Pr}_{f}\Big(|C_n| > \frac{U_p'}{\alpha} \exp(1) n^{-\frac{r_f}{2r_f+1}}\Big)  &=  \sup_{r\in [t,s]}\sup_{f \in \Sigma(r,B)}{\Pr}_{f}\Big(|C_n| > \frac{U_p'}{\alpha} \exp(1) n^{-\frac{r_f}{2r_f+1}}\Big)\\
&\geq \sup_{r\in [t,s]}\sup_{f \in \Sigma(r,B)}{\Pr}_{f}\Big(|C_n| > \frac{U_p'}{\alpha} \exp(1) n^{-\frac{r}{2r+1}}\Big),
\end{align*}
since $f \in \Sigma(r,B)$ implies $r \leq r_f$. Finally
\begin{equation*}
\sup_{r\in [t,s]} \sup_{f \in \Sigma(r,B)}{\Pr}_{f}\Big(|C_n| > \frac{1}{\alpha}U_p'\exp(1) n^{-\frac{r}{2r+1}}\Big) \leq 2\alpha.
\end{equation*}

\subsection{Step 4: Bound on the probability that the parameter is in the confidence set}

Also we have by Markov's inequality
\begin{align}
  \inf_{f \in \Sigma(t,B)} {\Pr}_{f}\Big( f \in C_n \Big) &\geq 1 - \sup_{f \in \Sigma(t,B)} {\Pr}_{f}\Big( \|\hat f_n(\hat j_n) - f\|_p \geq \frac{1}{\alpha}U_p' n^{-\frac{\hat r}{2\hat r +1}}\Big) \nonumber\\
&\geq 1-  \sup_{f \in \Sigma(t,B)} {\Pr}_f(\hat r \not\in [r_f- \epsilon,r_f^+ + \epsilon]) \nonumber\\
&- \sup_{f \in \Sigma(t,B)} {\Pr}_{f}\Big( \|\hat f_n(\hat j_n) - f\|_p \geq  \frac{1}{\alpha}U_p' n^{-\frac{r_f^+ + \epsilon}{2(r_f^+  + \epsilon) +1}}\Big) \nonumber\\
&\geq 1- 2\alpha  -  \sup_{f \in \Sigma(t,B)} {\Pr}_{f}\Big( \|\hat f_n(\hat j_n) - f\|_p \geq  \frac{1}{\alpha}U_p' \exp(-2) n^{-\frac{r_f^+ - \epsilon}{2(r_f^+ - \epsilon) +1}}\Big) \nonumber\\
&\geq 1- 2\alpha  -  \sup_{f \in \Sigma(t,B)} \Big[ \frac{\alpha}{U_p'} \exp(2) n^{\frac{r_f^+ - \epsilon}{2(r_f^+ - \epsilon) +1}} \mathbb E_{f} \|\hat f_n(\hat j_n) - f\|_p  \Big], \label{eq:lm}
\end{align}
by definition of $\epsilon$ and since $\exp(- 2/(2(r_f + \epsilon) + 1)) \geq \exp(-2)$. 

We have for any $f \in \Sigma(t,B)$ that 
\begin{equation*}
\|f - \Sigma(r_f^+ - \epsilon,B)\|_p \leq \rho_n(r_f^+ - \epsilon) = 2C(B+1)n^{-(r_f^+ - \epsilon)/(2(r_f^+ - \epsilon) +1)}
\end{equation*}
by Equation~\eqref{eq:cac3}. Since $r_f^+ - \epsilon>t-1/\log(n)>t/2>0$ by definition of $n$, we have
\begin{align*}
&\sup_{f \in \Sigma(t,B)} \Big[ \frac{1}{\tilde U_p(B,2C(B+1))} n^{\frac{r_f^+ - \epsilon}{2(r_f^+ - \epsilon) +1}} \E_f \|\hat f_n(\hat j_n) - f \|_p \Big]\\
&\leq \sup_{r>0}\sup_{f \in L_2: \|f - \Sigma(r,B)\|_2 \leq \rho_n(r)} \Big[\frac{1}{  \tilde U_p(B,2C(B+1))} n^{r/(2r+1)}  \E_f \|\hat f_n(\hat j_n) - f \|_p \Big].
\end{align*}
Combining this with Theorem~\ref{th:inferenceersatz} implies, since $n>N_p$, that
\begin{align*}
&\sup_{f \in \Sigma(t,B)} \Big[ \frac{1}{\tilde U_p(B,2C(B+1))} n^{\frac{r_f^+ - \epsilon}{2(r_f^+ - \epsilon) +1}} \E_f \|\hat f_n(\hat j_n) - f \|_p \Big] \leq 1,
\end{align*}
since $U_p' \geq \tilde U_p(B,2C(B+1))$. 
We conclude by plugging this result into Equation~\eqref{eq:lm} that
\begin{align*}
\inf_{f \in \Sigma(t,B)} {\Pr}_{f}\Big( f \in C_n \Big)
&\geq 1- 3\exp(2)\alpha.
\end{align*}

\paragraph{Conclusion.}
All these results hold for \textit{any} $n>\max(N_p, \exp(2/t))$. We have thus proven that $C_n$ is an honest and adaptive confidence set on $\Sigma(t,B)$ for the whole interval $[t,s]$ and $B$.
\end{proof}

\section{Discussion on the extension of the results to more general settings}\label{app:discnongauss}

The test statistic $\Psi_n$ that is considered relies mostly on estimates of $|a_{l,k}|^p$, which are the $\hat F_{p}^p(l,k)$. The obstacle for generalising the method presented in the paper to the regression setting, is the adaptation of these estimates to the regression setting. But the construction of the $\hat F_{p}^p(l,k)$ depends crucially on the distribution of the noise (error) to the signal $f$. Indeed, in the computation of the quantities $\hat F_p^p(l,k)$, we plug the $p$ first moments of a Gaussian distribution in order to correct the bias of $|\hat a_{l,k}|^p$ toward $|a_{l,k}|^p$. If the distribution of the noise is not Gaussian, the bias is not going to be corrected by these (Gaussian) moments, and we would want to replace them with the moments of the noise. 







However, if we do not wish to assume that we know the distribution of $\xi$ (or, moreover, if it is heterocedastic) the construction of $L_p$-adaptive and honest confidence sets in this setting is possible but slightly different from the construction proposed. We did not present it in the paper since it is rather technical but not fundamentally different from what happens in the Gaussian process setting.


We first remind how to estimate $\hat a_{l,k}$ in two classic settings, i.e.~regression and density estimation. In the setting of density estimation, i.e.~the data in this setting is $n$ i.i.d.~samples from a random variable of density $f$, we can estimate $a_{l,k}$ by 
$$\hat a_{l,k} =\frac{1}{n} \sum_{j\leq n} \psi_{l,k}(X_j).$$
If the density $f$ is bounded (and still defined on the compact $[0,1]$) then the estimates $\hat a_{l,k}$ computed in this way will be unbiased and have a variance-covariance structure that is of same order than in the case of the Gaussian process model.
In the setting of non-parametric regression, i.e.~the data in this setting is $n$ i.i.d.~samples $(X_j, Y_j)_n$ such that $Y_j = f(X_j) + \xi_j(X_j)$ where $\xi_j(X_j)$ is the noise, we can estimate $a_{l,k}$ by 
$$\hat a_{l,k} =\frac{1}{n}  \sum_{j\leq n} \psi_{l,k}(X_j) Y_j.$$
If the function $f$ is bounded (and still defined on the compact $[0,1]$), the design $X_j$ is uniformly random on $[0,1]$ and the noise $\xi_j(X_j)$ is independent in $j$ (although it might depend on $X_j$), of mean $0$, and sub-Gaussian, then the estimates $\hat a_{l,k}$ computed in this way will be unbiased and have a variance-covariance structure that is of same order than in the case of the Gaussian process model.


Now, a first idea to adapt $\hat F_p^p(l,k)$ to these settings (if we have $2n$ data) is to divide the sample in two sub-samples of size $n$, estimate the $p$ first moments of the distribution of $\sqrt{n} \hat a_{l,k}$ on the first half (that we write $\hat m_u^u$), compute $\hat a_{l,k}$ on the second sub-sample, and then redefine the $\hat F_{p}^p(l,k)$ as
\begin{align*}
\hat F_{p}^p(l,k) = |\hat a_{l,k}|^{p} - \sum_{u=0, u \hspace{1mm} even}^{\lfloor p \rfloor - 2} \mathbf  C_p^u \big(\frac{\hat m_{p-u}}{n^{1/2}}\big)^{(p-u)}  \hat F_{u}^u(l,k),
\end{align*}
where
\begin{align*}
\hat F_{u}^u(l,k) = \hat a_{l,k}^u - \sum_{i=0, i \hspace{1mm} even}^{u-2} \mathbf C_u^i \big(\frac{\hat m_{u-i}}{n^{1/2}}\big)^{(u-i)} \hat F_{i}^i(l,k).
\end{align*}
These quantities will verify the same properties as the $hat F_{p}^p(l,k)$ analysed in the paper (but the proof is more technical).

Another idea is to redefine the estimates $\hat F_p^p(l,k)$ of $|a_{l,k}|^p$. The idea is to divide the data in $\lfloor p \rfloor + \mathbf 1\{p - \lfloor p \rfloor \neq 0\}$ sub-samples of equal size, and to compute in each of these samples estimates of $a_{l,k}$ as described above. Let us denote by $\hat a_{l,k}^{(i)}$ the estimate of $a_{l,k}$ computed with the $i$th sub-sample. We propose to redefine the estimate $\hat F_p^p(l,k)$ of $|a_{l,k}|^p$ as
\begin{equation}\label{eq:alter}
\hat F_p^p(l,k)= \prod_{i=1}^{\lfloor p \rfloor}  \hat a_{l,k}^{(i)} \times \Big| \hat a_{l,k}^{(\lfloor p \rfloor + \mathbf 1\{p - \lfloor p \rfloor \neq 0})\} \Big|^{p - \lfloor p \rfloor}.
\end{equation}
The mean and variance of this estimate will verify the same inequalities as the estimate defined in the proof of Theorem 3.6 (Section 5), and similar results will hold. 

\end{document}